\documentclass[11pt,a4paper,oneside]{amsart}
\usepackage[a4paper,margin=1in]{geometry}
\usepackage[utf8]{inputenc} % allow utf-8 input
\usepackage[T1]{fontenc}    % use 8-bit T1 fonts
\usepackage{hyperref}       % hyperlinks
\usepackage{url}            % simple URL typesetting
\usepackage{booktabs}       % professional-quality tables
\usepackage{amsfonts}       % blackboard math symbols
\usepackage{nicefrac}       % compact symbols for 1/2, etc.
\usepackage{microtype}      % microtypography
\usepackage{xcolor}         % colors
\usepackage{graphicx}
\usepackage{float}
\usepackage{multirow}
\usepackage{subcaption}
\usepackage{enumitem}

\usepackage{amsmath, amssymb, amsthm}
\numberwithin{equation}{section}

\theoremstyle{plain}
\newtheorem{theorem}{Theorem}[section]
\newtheorem{lemma}[theorem]{Lemma}
\newtheorem{proposition}[theorem]{Proposition}

\theoremstyle{definition}

\newtheorem{assumption}[theorem]{Assumption}
\newtheorem{example}[theorem]{Example}

\theoremstyle{remark}
\newtheorem{remark}[theorem]{Remark}

\newcommand{\para}[1]{%
  \par\addvspace{0.8\baselineskip}%
  \noindent\textbf{#1.}\enspace\ignorespaces%
}

\title[Entropy Regularization Improves Robustness in Continuous-Time RL]{Entropy Regularization Improves Policy Robustness in Continuous-Time Reinforcement Learning
}

\author{Jialun Cao$^{1,*}$}
\author{Fernando Acero$^{2,*}$}
\author{David \v{S}i\v{s}ka$^1$}
\author{Yufei Zhang$^3$}

\thanks{$^{*}$Equal contribution.}
\thanks{$^{1}$School of Mathematics, University of Edinburgh, United Kingdom. \texttt{\{galen.cao\}\{d.siska\}@ed.ac.uk}}
\thanks{$^{2}$J.P. Morgan AI Research, United Kingdom. \texttt{fernando.acero@jpmorgan.com}}
\thanks{$^{3}$Department of Mathematics, Imperial College London, United Kingdom. \texttt{yufei.zhang@imperial.ac.uk}}

\begin{document}

\begin{abstract}
Entropy regularization is widely used in continuous-time reinforcement learning (RL) to reduce sensitivity to environmental perturbations, yet its robustness benefits lack a rigorous theoretical foundation. This paper establishes the first robustness guarantees for entropy-regularized continuous-time Markov decision processes. We show that maximizing an entropy-regularized objective yields a lower bound on a worst-case robust RL problem with joint reward and transition perturbations. We analytically characterize the induced robust sets and prove that they expand monotonically with the regularization strength, justifying the empirical observation that stronger entropy improves robustness. In contrast to prior discrete-time analyses, our results remove the intractable state-distribution entropy term and provide guarantees invariant to action frequency. Experiments on queueing network control and market making confirm our theory, showing that entropy-regularized policies outperform greedy and $\epsilon$-greedy baselines under dynamics perturbations.

\end{abstract}

\keywords{Entropy regularization; policy robustness; robustness in reinforcement learning; continuous-time reinforcement learning; continuous-time Markov decision processes}

\maketitle
% ======================================================================
\section{Introduction}\label{sec:intro}
% ======================================================================

Policy robustness is a central requirement for deploying reinforcement learning (RL) in real-world systems. Policies trained in simulation or under a nominal model often face environments at deployment that differ from those seen during training, due to model misspecification, unmodeled dynamics, non-stationarity, human biases, or even adversarial disturbances~\cite{eysenbach2022maximum}. Such sim-to-real and train-to-deploy gaps can severely degrade performance if the learned policy is brittle~\cite{tobin2017domain, peng2018sim, aljalbout2025reality}. Ensuring that policies maintain reliable performance under perturbations of the environment is therefore a key challenge in modern RL.

One approach is robust RL, which models uncertainty via a set of plausible transition kernels or reward functions and seeks policies that perform well in the worst case over this set  ~\cite{morimoto2005robust,osogami2012robustness,lim2013reinforcement, pinto2017robust,tessler2019action}. 
This leads to a two-player game between a controller and an adversary that perturbs the dynamics and rewards. While this framework yields explicit worst-case guarantees, it requires specifying a suitable uncertainty set and solving a more challenging minimax problem, often with specialized algorithms and increased sample and computational cost~\cite{nilim2005robust,wiesemann2013robust,chen2024robust,liang2022efficient,bukharin2023robust}. 

In parallel, entropy-regularized RL has emerged as a  widely used alternative that often exhibits robust behavior without explicitly modeling an adversary or solving a minimax problem. By augmenting the reward objective with an entropy term, entropy-regularized methods encourage stochastic policies that explore more broadly, improve optimization stability, and empirically tend to be less sensitive to perturbations in the environment. Such methods underlie many state-of-the-art algorithms in  large-scale RL~\cite{schulman2015trust, schulman2017proximal}, where entropy regularization is primarily viewed as a tool for   numerical stability, yet is widely observed to improve robustness in practice.

Recent theoretical work in discrete-time Markov decision processes (MDPs) has begun to formalize this phenomenon. Several authors have shown that entropy-regularized RL can be interpreted as optimizing robust objectives under suitable perturbation models for the reward and transition kernel~\cite{eysenbach2022maximum, brekelmans2022your,derman2023robustness,ashlag2025state}. These results suggest that robustness may arise implicitly from entropy regularization, without the algorithmic and modeling overhead of explicit robust RL.

However, most of this theory is developed in discrete-time settings, whereas many applications of RL, especially in physical systems, robotics, finance, and inventory management, are more naturally modeled in continuous time. In such domains, the environment state can change at any time, control acts continuously, and performance is evaluated via continuous-time functionals. Continuous-time RL provides a natural framework for these problems, avoiding the need to artificially discretize time and approximate the underlying dynamics, as well as the potential performance degradation that can arise when applying algorithms designed for discrete-time MDPs~\cite{jia2022policy_eval, jia2022policy_grad,
jia2023q, zhao2023policy,
cheng2026deterministic}. In these settings, entropy regularization is often used explicitly in algorithm design, yet its robustness benefits have not been rigorously analyzed. This raises the following question:
\begin{quote}
 \emph{To what extent does entropy regularization in continuous-time RL provide   robustness guarantees, and against which classes of environment perturbations?}
\end{quote}
Answering this question requires techniques beyond those used in the discrete-time setting, as the structure of continuous-time dynamics can fundamentally change how entropy regularization interacts with model uncertainty. In fact, as we shall show, existing robust guarantees for discrete-time RL can degenerate as the number of time steps increases (Example \ref{example:robust set}).

\para{Our Contribution}
In this paper, we take an initial step toward addressing this gap by establishing robustness guarantees  of entropy-regularized   continuous-time MDPs, where  the state dynamics is given by a controlled continuous-time Markov chain. Such problems arise naturally in various domains, such as network queuing control \cite{dai2022queueing}, market making for high-frequency trading \cite{cartea2015algorithmic}, or supply-chain management \cite{puterman1994mdp}. 
Our   main contributions are summarized as follows:
\begin{itemize}[leftmargin=10mm]
    \item We  show that the entropy-regularized objective  with a   log-transformed reward  provides a lower bound for a robust RL problem over a set  of joint reward and state perturbations (Theorems \ref{thm:ct_global_sa_robust} and \ref{thm:maxent_robust_ctmdp}). 
    In contrast to   existing robustness results for discrete-time MDPs~\cite{eysenbach2022maximum, ashlag2025state}, our  reward  does not involve a difficult-to-compute entropy term of the state distribution, making it applicable to general RL tasks (Remark \ref{rmk:pessimistic_reward}). In the   case   with reward-only perturbations, we show  that entropy-regularized RL is exactly equivalent to a robust RL problem (Theorem \ref{thm:exact_reward_only_robustness}).
\item We   provide two complementary characterizations of the corresponding robust sets: one in terms of induced state-occupancy measures, and another in terms of \emph{the local entropy rate} of the state transitions. 
Both   sets  expand as the entropy regularization parameter increases, thereby justifying empirical observations that stronger entropy regularization improves policy robustness (Propositions \ref{prop:monotonicity_tau_global} and 
\ref{prop:monotonicity_tau}).
We further prove that robust-set constructions proposed in \cite{eysenbach2022maximum} for discrete-time RL degenerate to an empty set as the number of time steps increases and
 thus fail in the continuous-time limit  (Example \ref{example:robust set}).
% \item Our numerical experiments on queueing network control confirm that entropy regularization yields more robust policies than both greedy policies for a nominal environment model and $\epsilon$-greedy policies (Section \ref{sec:experiments}). 
\item Numerical experiments on queueing network control and market making confirm 
our theory:
entropy regularization yields more robust policies than both greedy policies for a nominal environment model and $\epsilon$-greedy policies (Section \ref{sec:experiments} and Appendix~\ref{app:robustness}) and validate the tightness of the theoretical certificates (Appendix~\ref{app:certificate}).
\end{itemize}

% =================================================================
% \textbf{Most Related Works.}%\label{sec:related}
\subsection{Closely Related Works}
% =================================================================
Given the extensive research literature on RL, we focus on the specific subfield that is most relevant to our work.

\para{Robust RL}
In RL, robustness has traditionally been studied via robust MDPs, where policies optimize worst-case performance over uncertainty sets for rewards or transition kernels \cite{nilim2005robust,iyengar2005robust,wiesemann2013robust}. While these methods provide  theoretical guarantees against model misspecification, they typically require inner adversarial optimization and are computationally expensive. Recent work proposes more scalable approaches for particular robustness sets \cite{grandclement2021scalable, wang2023policy,kumar2023policy,zhou2023natural,li2026policy}, but results largely remain for discrete-time MDPs. Continuous-time robust RL is far less developed and has primarily been studied through $H_\infty$
  control and differential-game formulations for nonlinear controlled ODEs \cite{morimoto2005robust,perrusquia2021continuous,lutter2021robust}. 
  
To the best of our knowledge, there is no prior work on continuous-time robust RL for stochastic dynamics. Moreover, as in discrete time, these formulations are computationally challenging, often requiring time discretization and inner minimax updates \cite{perrusquia2021continuous}.

\para{Entropy-regularized RL and robustness}
Entropy-regularization is  widely used to  improve exploration, probabilistic inference, and optimization stability~\cite{todorov2006linearly,rawlik2013stochastic,geist2019theory,haarnoja2018soft,lan2023mirror,kerimkulov2025fisher} (see Appendix \ref{app:extended_related} for   applications in continuous-time RL). Recent work connects robustness and entropy regularization, but primarily in discrete-time MDPs. Under reward uncertainty, policy regularization admits an exact robust interpretation  against worst-case reward perturbations~\cite{husain2021regularized,brekelmans2022your}. Under transition uncertainty, however, ordinary policy-entropy regularization typically provides only a lower bound on a robust control objective rather than an exact equivalence~\cite{eysenbach2022maximum,derman2023robustness}, and robust formulations may require value-dependent regularization (e.g., twice-regularized MDPs~\cite{derman2021twice}). State-entropy regularization further strengthens this view by yielding a nontrivial, generally unimprovable lower bound under kernel uncertainty~\cite{ashlag2025state}.

However, these robustness bounds often involve an intractable state-distribution entropy term in the reward, limiting applicability to general RL settings. Moreover, these discrete-time   guarantees can degrade as the step size shrinks, preventing direct extension to continuous-time dynamics.

\section{Problem Formulation} \label{sec: preliminaries}

\para{Entropy-regularized CTMDP}
Consider an infinite-horizon continuous-time Markov decision process (CTMDP)  
defined by the tuple
$
\big(\mathcal S, \mathcal A, (\mathcal A_s)_{s\in\mathcal S}, \lambda, R\big),
$ 
as in \cite{guo2009continuous}:    $\mathcal S$  and  $\mathcal A$  are finite state and action spaces, respectively. For each state $ s\in\mathcal S$,    $\mathcal A_s\subseteq\mathcal A $ is a  nonempty set, representing the    available actions in state  $s$. 
For each   state--action pair  $(s,a)\in \mathcal S\times\mathcal A_s$,   $ \lambda^a_{ss'}$  is  the transition rate  from state  $s$  to state $ s'$   under action $a$, which satisfies 
$
 \lambda^a_{ss'} 
 \ge 0$ for all $s'\neq s$, and  
$\lambda^a_{ss} = -\sum_{s'\neq s}\lambda^a_{ss'}
$.    
$ 
R:\mathcal S\times\mathcal A \to (0, \infty)
$ 
is the instantaneous reward  function.

We consider optimizing a discounted objective over stationary randomized Markov policies. 
 Let $\mathcal P(\mathcal A)$ be the space of probability measures on $\mathcal A$,
 and   define the admissible policy class by 
$$\Pi
    :=
    \big\{
        \pi : \mathcal S \to \mathcal P(\mathcal A) 
        \,|\,
        \sum_{a\in A_s}\pi(a  | s)=1,
        \ \forall s \in \mathcal S
    \big\} \, .$$ 
Let $\rho \in \mathcal P(\mathcal S)$  be the  initial state distribution. 
For each 
$\pi\in \Pi$,
let 
$\mathbb P^\pi$  
denote the law of the controlled continuous-time Markov chain  (CTMC) $(S_t)_{t \ge 0}$  initialized according to $\rho$, 
with the generator matrix 
$\lambda^\pi =  (\lambda^\pi_{ss'})_{s,s' \in \mathcal S}$
induced by $\pi$,  
where 
$\lambda^\pi_{ss'} := \sum_{a \in \mathcal A_s} \pi(a | s)\lambda^a_{ss'}$ for all $s,s' \in \mathcal S$.
 Let  $r>0$ be the  discount rate and $\tau\ge 0$ be the  regularization parameter.
We seek a policy $\pi\in\Pi$ that maximizes the entropy-regularized objective
\begin{equation}
\label{eq:def_J_tau}
\begin{aligned}
    J_{\tau}(\pi)
    &:=
    \mathbb E_\rho^{\mathbb P^\pi}
    \bigg[
        \int_0^\infty e^{-rt}
        \bigg(
            \sum_{a \in \mathcal A_{S_t}}
            \pi(a | S_t)\, R(S_t,a)
            -
            \tau\,\mathrm{KL}(\pi \,\|\, \mu)(S_t)
        \bigg)
        \mathrm dt
    \bigg]\,,
\end{aligned}
\end{equation}
where
  $\mu: \mathcal S \to \mathcal P(\mathcal A) $ is  a reference policy
  and 
$\mathrm{KL}(\pi\|\mu)$ is 
the Kullback-Leibler (KL) divergence between $\pi$ and $\mu$ defined by  $$\mathrm{KL}(\pi\|\mu)(s) :=  \mathrm{KL}\big( \pi (\cdot | s)\|\mu (\cdot | s) \big) =\sum_{a\in\mathcal A_s} \pi(a| s)\log\tfrac{\pi(a| s)}{\mu(a| s)} \, ,$$
with the convention $0\log(0/x)=0$. 
For simplicity, we assume   $\operatorname{supp}\mu(\cdot | s) = \mathcal A_s$ for all $s \in \mathcal S$, 
  and 
  for each $\pi \in \Pi$,
  define the normalized   discounted state occupancy measure
  $\bar{\mathrm d}_{\rho}^\pi\in \mathcal P(\mathcal S)$
  by 
 $$ 
    \bar{\mathrm d}_{\rho}^\pi(s)
    :=
    r \mathbb E_\rho^{\mathbb P^\pi}
    \Big[
        \int_0^\infty e^{-rt}\mathbf 1_{\{S_t=s\}}\,\mathrm dt
    \Big], \quad \forall s \in \mathcal S \, .
    $$

The objective $ J_{\tau}(\pi)$ has a natural event-driven implementation, in which an agent interacts with a controlled CTMC at Poisson-distributed decision epochs and samples actions from 
$\pi$ based on the observed states;  see Appendix  \ref{appx: sampling methods difference} and  \cite[Ch. 11]{puterman1994mdp}
for details.

\para{Policy Robustness in  CTMDP} 
The main goal of this paper is to show that maximizing the entropy-regularized objective \eqref{eq:def_J_tau} yields policies with robustness guarantees. We measure  the robustness of a policy $\pi\in \Pi$
by  its performance under perturbed instantaneous reward functions and perturbed CTMCs with modified transition rates.

 Specifically, 
 we consider a parameterized family of transition rates and reward functions indexed by shared parameters 
 $\theta\in \Theta$.
 This allows us to model structured perturbations that simultaneously affect both the dynamics and the rewards, as is common in many CTMDP applications (see Section \ref{sec:worked_example}).
 We denote dependence on $\theta\in \Theta$ by a subscript,
  for example in the transition rates
 $\lambda^a_\theta = (\lambda^a_{ss',\theta})_{s,s'\in\mathcal S}$, 
  the reward function $R_\theta=(R_\theta(s,a))_{(s,a)\in \mathcal S\times \mathcal A}$, and  
  the  law of the CTMC $\mathbb{P}^\pi_{\theta}$.
   Moreover, we require that for any $\theta \in \Theta$, the parameterized components maintain the standard CTMDP structure defined previously; in particular, the reward function satisfies $R_\theta(s,a) > 0$. 
For any given $\pi\in \Pi$, 
we define the   robust   objective
in terms of a  robust set 
$\mathcal C \subset 
 \Theta$:
\begin{equation}
\label{eq:def_robust_objective}
\begin{aligned}
    \inf_{\tilde\theta\in \mathcal C} \,
    \mathbb E_\rho^{\mathbb P^\pi_{\tilde\theta}}
    \bigg[
        \int_0^\infty e^{-rt}
        \sum_{a \in \mathcal A_{S_t}}
        \pi(a | S_t)\,R_{\tilde\theta}(S_t,a)\,
        \mathrm dt
    \bigg]\,.
\end{aligned}
\end{equation}
Finding an optimal policy that maximizes \eqref{eq:def_robust_objective} can be computationally expensive, as it requires solving a zero-sum game.
 In the sequel, we show that the entropy-regularized objective \eqref{eq:def_J_tau} for
a  baseline model provides a lower bound on the robust objective \eqref{eq:def_robust_objective}, and   further characterize the corresponding robust set $\mathcal C$
 analytically in terms of the baseline model, the policy, and the entropy-regularization parameter. This implies that a policy maximizing the entropy-regularized objective \eqref{eq:def_J_tau} implicitly guarantees robustness over a family of adversarial models.

\section{Main Theoretical Results}
\label{sec: main_results}

This section presents robustness guarantees for entropy-regularized CTMDPs.
Our main results show that the entropy-regularized objective with a simple pessimistic reward provides a lower bound for a robust RL problem with respect to a family of joint reward and state perturbations. Unlike existing robustness results, our pessimistic reward does not involve a difficult-to-compute state-entropy. We provide two   characterizations of the robust sets: one in terms of induced state-occupancy measures, and another in terms of the local entropy rate of the state transitions. We further show that robust-set constructions from discrete-time RL degenerate as the number of time steps increases and do not   extend to the continuous-time setting.
In the sequel,  let   $\theta\in \Theta$ be a fixed   baseline (nominal) model.

\subsection{Robustness under Reward-only Perturbations}

We first establish a strong duality between the entropy-regularized objective and a robust RL problem in the setting where the perturbation affects only the reward function. This result will then be used to prove the robustness theorem under joint perturbations of both the dynamics and the reward functions.
Since the perturbations $\tilde\theta\in\Theta$ only affect the reward, we have $\mathbb P_{\tilde\theta}^{\pi} = \mathbb P_{\theta}^{\pi}$.

\begin{theorem}
\label{thm:exact_reward_only_robustness}
Let  $\pi\in\Pi$, $\tau>0$ and $\varepsilon \ge 0$. 
Define the reward-only robust set by
\begin{equation}
\label{eq:reward_only_exact_set}
\begin{aligned}
    \widehat{\mathcal C}_{\tau,\varepsilon}^\pi(\theta)
    :=
    \Big\{
        \tilde\theta \in \Theta
        \,\Big|\,
        \tau \log
        \bigg[
        \sum_{s \in \mathcal S}
        \bar{\mathrm d}_{\rho,\theta}^\pi(s)
        \sum_{a \in \mathcal A_s}
        \mu(a | s)
        \exp\bigg(
            \tfrac{
                R_\theta(s,a)-R_{\tilde\theta}(s,a)
            }{\tau}
        \bigg)
        \bigg]
        \le \varepsilon
    \Big\}\,.
\end{aligned}
\end{equation}
Let $J_{\tau}(\pi;\theta)$
be the entropy-regularized objective 
\eqref{eq:def_J_tau}
under the baseline model $\theta$.
Then
\begin{equation}
\label{eq:reward_only_exact_lower_bound}
\begin{aligned}
    \inf_{\tilde\theta \in  \widehat{\mathcal C}_{\tau,\varepsilon}^\pi(\theta)}
    \mathbb E_\rho^{\mathbb P^\pi_{\theta}}
    \bigg[
        \int_0^\infty e^{-rt}
        \sum_{a \in \mathcal A_{S_t}}
        \pi(a | S_t)\,R_{\tilde\theta}(S_t,a)\,
        \mathrm dt
    \bigg]
    \geq
    J_{\tau}(\pi;\theta)
    -
    \frac{\varepsilon}{r}.
\end{aligned}
\end{equation}
where an equality is achieved  
if 
there exists $\tilde \theta^* \in \Theta$ such that 
$ 
    R_{\tilde \theta^*}(s,a)
    =
    R_{\theta}(s,a)
    -
    \tau \log \frac{\pi(a | s)}{\mu(a | s)}
    -
    \varepsilon \, .
$ 
In this case, 
$\tilde \theta^*$ is a worst-case model. 
\end{theorem}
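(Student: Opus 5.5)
The plan is to rewrite everything in terms of the normalized occupancy measure and then apply the Donsker--Varadhan (Gibbs) variational inequality on the finite set $\mathcal S\times\mathcal A$.

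\textbf{Step 1 (occupancy reformulation).} Since the reward-only perturbation leaves the law $\mathbb P^\pi_\theta$ unchanged, Fubini gives, for any bounded $f:\mathcal S\times\mathcal A\to\mathbb R$,
\[
\mathbb E_\rho^{\mathbb P^\pi_\theta}\Big[\int_0^\infty e^{-rt}\sum_{a\in\mathcal A_{S_t}}\pi(a|S_t)f(S_t,a)\,\mathrm dt\Big]
=\frac1r\sum_{s}\bar{\mathrm d}^\pi_{\rho,\theta}(s)\sum_{a\in\mathcal A_s}\pi(a|s)f(s,a).
\]
Define the probability measures $P(s,a):=\bar{\mathrm d}^\pi_{\rho,\theta}(s)\pi(a|s)$ and $Q(s,a):=\bar{\mathrm d}^\pi_{\rho,\theta}(s)\mu(a|s)$ on $\mathcal S\times\mathcal A$. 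Then
\[
\mathrm{KL}(P\|Q)=\sum_s \bar{\mathrm d}^\pi_{\rho,\theta}(s)\,\mathrm{KL}(\pi\|\mu)(s),
\]
which is finite because $\operatorname{supp}\mu(\cdot|s)=\mathcal A_s$. Consequently
\[
r J_\tau(\pi;\theta)=\mathbb E_P[R_\theta]-\tau\,\mathrm{KL}(P\|Q),
\]
and the robust objective for $\tilde\theta$ equals $\tfrac1r\,\mathbb E_P[R_{\tilde\theta}]$.

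\textbf{Step 2 (Gibbs inequality).} Set $g:=(R_\theta-R_{\tilde\theta})/\tau$. The Donsker--Varadhan inequality gives
\[
\mathbb E_P[g]\le \mathrm{KL}(P\|Q)+\log\mathbb E_Q[e^{g}].
\]
Multiplying by $\tau$, and noting that $\tau\log\mathbb E_Q[e^g]$ is exactly the quantity bounded by $\varepsilon$ in the definition of $\widehat{\mathcal C}^\pi_{\tau,\varepsilon}(\theta)$, we obtain for every $\tilde\theta$ in that set
\[
\mathbb E_P[R_\theta]-\mathbb E_P[R_{\tilde\theta}]\le \tau\,\mathrm{KL}(P\|Q)+\varepsilon.
\]
Rearranging and dividing by $r$ gives the bound for each such $\tilde\theta$, hence for the infimum. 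The inequality itself is a one-line consequence of Jensen's inequality applied to $\log\mathbb E_P[e^{g}\,dQ/dP]$; only absolute continuity $P\ll Q$ is needed, and it holds by the support assumption.

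\textbf{Step 3 (equality).} Take $R_{\tilde\theta^*}=R_\theta-\tau\log(\pi/\mu)-\varepsilon$. Then $e^{g}=e^{\varepsilon/\tau}\,\pi/\mu$, so
\[
\mathbb E_Q[e^g]=e^{\varepsilon/\tau}\sum_s\bar{\mathrm d}(s)\sum_{a:\pi(a|s)>0}\pi(a|s)\le e^{\varepsilon/\tau}.
\]
This gives $\tau\log\mathbb E_Q[e^g]\le\varepsilon$, so $\tilde\theta^*\in\widehat{\mathcal C}$. Moreover,
\[
\mathbb E_P[R_{\tilde\theta^*}]=\mathbb E_P[R_\theta]-\tau\,\mathrm{KL}(P\|Q)-\varepsilon=rJ_\tau(\pi;\theta)-\varepsilon,
\]
so the lower bound is attained by a member of the set. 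Therefore $\tilde\theta^*$ is a minimizer and equality holds.

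\textbf{Main obstacle.} The main technical wrinkle is the case where $\pi(a|s)=0$ for some $a$, which makes $R_{\tilde\theta^*}(s,a)=+\infty$, or where $\bar{\mathrm d}(s)=0$. I would handle these with the convention $\log(0/x)=-\infty$, so such terms contribute $e^{g}=0$ to $\mathbb E_Q$ and carry zero $P$-mass in the objective. Equivalently, one can read the hypothesis as requiring $\pi$ to have full support whenever $\tilde\theta^*$ must lie in $\Theta$ with finite positive rewards. Everything else is the Fubini identity and the Gibbs inequality.
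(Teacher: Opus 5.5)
Your proof is correct, but it takes a different route from the paper's.

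The paper works over the space of all reward functions. It introduces the relaxed set $\widehat{\mathcal R}^\pi_{\tau,\varepsilon}(\theta)$ of rewards $\widehat R$ satisfying the log-sum-exp constraint. It then minimizes $\tfrac1r\sum_s\bar{\mathrm d}^\pi_{\rho,\theta}(s)\sum_a\pi(a|s)\widehat R(s,a)$ over that set by Lagrange duality, using Slater's condition, stationarity of the Lagrangian and complementary slackness. This \emph{derives} the worst-case reward $\widehat R^*=R_\theta-\tau\log(\pi/\mu)-\varepsilon$ and shows that the relaxed infimum equals $J_\tau(\pi;\theta)-\varepsilon/r$ exactly. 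The bound over $\widehat{\mathcal C}^\pi_{\tau,\varepsilon}(\theta)$ then follows because the parametrized set maps into the relaxed one.

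You instead prove the lower bound separately for each admissible $\tilde\theta$. You apply the Gibbs/Donsker--Varadhan inequality once, with $P=\bar{\mathrm d}\,\pi$ and $Q=\bar{\mathrm d}\,\mu$ on $\mathcal S\times\mathcal A$. This is the paper's own Lemma~\ref{lem: Fenchel duality of KL divergence}, which it uses only later for Theorem~\ref{thm:maxent_robust_ctmdp}, lifted to state--action pairs. You then check tightness by plugging in the candidate.

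What your route buys:
\begin{itemize}
\item It is shorter and needs only $P\ll Q$, which is guaranteed by $\operatorname{supp}\mu(\cdot|s)=\mathcal A_s$.
\item It is therefore insensitive to states with $\bar{\mathrm d}^\pi_{\rho,\theta}(s)=0$ or actions with $\pi(a|s)=0$. The paper's stationarity argument tacitly assumes these away. When $\bar{\mathrm d}(s)=0$, the Lagrangian does not depend on $\widehat R(s,\cdot)$, so it is not strictly convex there. When $\pi(a|s)=0$, the first-order condition has no finite solution, so the relaxed infimum is not attained.
\end{itemize}

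What the paper's duality route buys is an explanation of where $\tilde\theta^*$ comes from, and a proof that the constraint is active at the optimum. However, your Step~3 applied directly to $\widehat R^*$ also gives exactness over the unrestricted reward class whenever $\pi$ has full support. So you lose nothing relevant to the stated theorem.
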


Theorem
\ref{thm:exact_reward_only_robustness} shows that the optimal policy for the entropy-regularized objective \eqref{eq:def_J_tau}
is also optimal in the worst-case scenario with reward uncertainty    given in 
\eqref{eq:reward_only_exact_set}, which 
extends similar results for discrete-time maximum-entropy MDPs in 
\cite{eysenbach2022maximum, ashlag2025state} to the continuous-time setting. The proof is given in Appendix~\ref{appendix: proof of the reward -only}.

By Theorem \ref{thm:exact_reward_only_robustness},
the policy obtained from an entropy-regularized objective is robust to reward perturbations, as long as the new reward does not fall too far below the original one, quantified by  smoothed maximum   over the state–action space.
It is easy to show 
$ \widehat{\mathcal C}_{\tau,\varepsilon}^\pi(\theta)$  expands as $\tau$ increases,   indicating that entropy regularization improves policy robustness. The parameter $\varepsilon$ 
 represents the adversary’s budget for allowable perturbations: as 
$\varepsilon$ increases, the set of admissible perturbations grows, but the corresponding worst-case lower bound decreases.

\subsection{
Robustness under Joint Perturbations of Dynamics and Rewards
}

We now turn to robustness guarantees for entropy-regularized CTMDPs when the perturbation also affects the state dynamics.
As shown in \cite[Theorem 4.1]{eysenbach2022maximum}, 
the exact equivalence to a robust RL formulation, as in Theorem  \ref{thm:exact_reward_only_robustness}, does not hold in this setting. Instead, we will show that the entropy-regularized objective provides a lower bound on the worst-case value over a family of environments.
 
\para{Robustness Guarantee with State Occupancy Measure}
The first theorem quantifies robustness in terms of perturbations to the induced state-occupancy distribution.

\begin{theorem}
\label{thm:ct_global_sa_robust}
For all  $\pi \in \Pi$, $\tau>0$, and $\varepsilon\ge 0$, 
\small
\begin{equation}
\label{eq:robust_occup_measure}
\begin{aligned}
    &\inf_{\tilde\theta\in\widetilde {\mathcal C}^\pi_{\tau,\varepsilon}(\theta)}
    \mathbb E_{\rho}^{\mathbb P^\pi_{\tilde\theta}}
    \bigg[
        \int_0^\infty e^{-rt} \sum_{a \in \mathcal A_{S_t}}
        \pi(a | S_t)\, R_{\tilde\theta}(S_t, a)\,\mathrm dt
    \bigg] \\
    &\ge
    \exp\bigg(
        r \, \mathbb E_\rho^{\mathbb P^\pi_\theta}
        \bigg[
            \int_0^\infty e^{-rt}
            \Big(\sum_{a \in \mathcal A_{S_t}}
        \pi(a | S_t)\, 
                \log R_\theta(S_t, a)
             -
               \tau\,\mathrm{KL}(\pi\|\mu)(S_t)
            \Big)\,\mathrm dt
        \bigg]
        -
        \varepsilon - \log r
    \bigg)\,,
\end{aligned}
\end{equation}
\normalsize
where
 the robust set
 $ \widetilde {\mathcal C}^\pi_{\tau,\varepsilon}(\theta)$ is defined as
 \small
\begin{equation}
\label{eq:robust_set_entropy}
    \widetilde{\mathcal C}_{\tau,\varepsilon}^\pi(\theta)
    :=
    \Big\{
        \tilde\theta\in\Theta
        \,\Big|\,
        \tau\log
        \bigg[
        \sum_{s\in\mathcal S}
        \bar{\mathrm d}_{\rho,\theta}^\pi(s)
        \sum_{a\in\mathcal A_s}
        \mu(a| s)
        \exp\bigg(
            \frac{
                f_{\theta,\tilde\theta}(s,a)
            }{\tau}
        \bigg)
        \bigg]
        \le \varepsilon
    \Big\}\,,
\end{equation}
\normalsize
where 
$f_{\theta,\tilde\theta}(s,a):=
 \log
                \tfrac{
                    \bar{\mathrm d}_{\rho,\theta}^\pi(s)
                }{
                    \bar{\mathrm d}_{\rho,\tilde\theta}^\pi(s)
                }
                -
                \log
                \frac{
                    R_{\tilde\theta}(s,a)
                }{
                    R_\theta(s,a)
                }
$.
\end{theorem}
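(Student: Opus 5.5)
The plan is to express both sides through discounted state occupancy measures, reduce the claim to a single inequality between probability measures on $\mathcal S\times\mathcal A$, and then combine Jensen's inequality with the Donsker--Varadhan (Gibbs) variational inequality. No dynamic-programming argument should be needed, because the occupancy measures already absorb the change in dynamics.

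\emph{Step 1 (occupancy rewriting).} By Fubini and the definition of $\bar{\mathrm d}^\pi_{\rho,\tilde\theta}$, the robust value for a fixed $\tilde\theta$ is
\[
V(\tilde\theta)=\tfrac1r\sum_{s}\bar{\mathrm d}^\pi_{\rho,\tilde\theta}(s)\sum_{a\in\mathcal A_s}\pi(a| s)R_{\tilde\theta}(s,a).
\]
In the same way, $r$ times the expectation inside the exponential on the right-hand side equals
\[
X:=\sum_s \bar{\mathrm d}^\pi_{\rho,\theta}(s)\Big(\sum_a\pi(a| s)\log R_\theta(s,a)-\tau\,\mathrm{KL}(\pi\|\mu)(s)\Big).
\]
The goal is therefore $\log(rV(\tilde\theta))\ge X-\varepsilon$ for every $\tilde\theta\in\widetilde{\mathcal C}^\pi_{\tau,\varepsilon}(\theta)$.

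\emph{Step 2 (Jensen).} Let $d:=\bar{\mathrm d}^\pi_{\rho,\theta}$ and $\tilde d:=\bar{\mathrm d}^\pi_{\rho,\tilde\theta}$, and let $q(s,a):=d(s)\pi(a| s)$, a probability measure on state--action pairs. Restricting the sum in $rV$ to states with $d(s)>0$ (which only decreases it, since $R_{\tilde\theta}>0$), write
\[
\tilde d(s)R_{\tilde\theta}(s,a)=d(s)\exp\big(\log R_\theta(s,a)-f_{\theta,\tilde\theta}(s,a)\big).
\]
This gives $rV\ge \mathbb E_q\big[\exp(\log R_\theta-f_{\theta,\tilde\theta})\big]$. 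By Jensen's inequality for $\log$,
\[
\log(rV)\ge \mathbb E_q[\log R_\theta]-\mathbb E_q[f_{\theta,\tilde\theta}].
\]
We must handle states with $d(s)>0$ but $\tilde d(s)=0$. There $f=+\infty$, so the constraint defining $\widetilde{\mathcal C}^\pi_{\tau,\varepsilon}(\theta)$ fails, because $\mu(\cdot| s)$ has full support on $\mathcal A_s$. Hence such $\tilde\theta$ are excluded, and $f$ is finite $q$-a.s.

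\emph{Step 3 (variational bound on the penalty).} Let $p(s,a):=d(s)\mu(a| s)$. By the Donsker--Varadhan inequality applied to $f/\tau$,
\[
\mathbb E_q[f]\le \tau\,\mathrm{KL}(q\|p)+\tau\log\mathbb E_p\big[e^{f/\tau}\big].
\]
The two measures share the same state marginal, so $\mathrm{KL}(q\|p)=\sum_s d(s)\,\mathrm{KL}(\pi\|\mu)(s)$. The second term is exactly the quantity constrained in \eqref{eq:robust_set_entropy}, so it is at most $\varepsilon$.

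\emph{Conclusion.} Substituting Step 3 into Step 2 gives $\log(rV(\tilde\theta))\ge X-\varepsilon$, that is, $V(\tilde\theta)\ge\exp(X-\varepsilon-\log r)$. Taking the infimum over $\tilde\theta$ yields \eqref{eq:robust_occup_measure}.

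The main obstacle is not analytic depth but bookkeeping. One must recognize the correct reference measure $q=d\pi$, under the nominal occupancy, so that the change of dynamics appears only through the density ratio $d/\tilde d$ inside $f$. One must also treat the support and infinite-value edge cases carefully. The Donsker--Varadhan step is where $\tau$ enters, and it should mirror the argument used for Theorem~\ref{thm:exact_reward_only_robustness}.
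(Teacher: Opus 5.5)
Your proof is correct, and its first half is the paper's. You rewrite both sides through discounted occupancies, drop states outside the support of $\bar{\mathrm d}^\pi_{\rho,\theta}$, and apply Jensen to $\log$ under the nominal state--action occupancy $\bar\nu^\pi_{\rho,\theta}(s,a)=\bar{\mathrm d}^\pi_{\rho,\theta}(s)\pi(a| s)$ (your $q$).

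The difference is the last step. The paper does not apply a variational inequality directly. It sets $\widehat R:=\log\big(\tfrac{\bar{\mathrm d}^\pi_{\rho,\tilde\theta}}{\bar{\mathrm d}^\pi_{\rho,\theta}}R_{\tilde\theta}\big)$ and observes that $\tilde\theta\in\widetilde{\mathcal C}^\pi_{\tau,\varepsilon}(\theta)$ exactly when $\widehat R$ lies in the reward-only set centred at the nominal reward $\log R_\theta$. It then invokes Theorem~\ref{thm:exact_reward_only_robustness}, whose proof computes the infimum over that reward set by Lagrangian duality: Slater's condition, KKT, and the explicit minimiser $\widehat R^*=R-\tau\log\frac{\pi}{\mu}-\varepsilon$.

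Your Donsker--Varadhan step is the pointwise content of that duality computation. It yields the same bound for each fixed $\tilde\theta$ without optimising over reward functions. It is also the same lemma the paper uses later for Theorem~\ref{thm:maxent_robust_ctmdp}, so your route makes the two proofs uniform.

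Your route gains self-containedness and cleaner edge cases. You define $f_{\theta,\tilde\theta}$ at every $(s,a)$ with $\bar{\mathrm d}^\pi_{\rho,\theta}(s)>0$, and you explicitly exclude any $\tilde\theta$ with $\bar{\mathrm d}^\pi_{\rho,\tilde\theta}(s)=0$ there, using the full support of $\mu$. The paper instead defines $\widehat R$ only on the support of $\bar\nu^\pi_{\rho,\theta}$. That support omits actions with $\pi(a| s)=0$, which still enter the constraint through $\mu$, so the paper relies on $-\infty$ and $0\cdot\infty=0$ conventions.

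The paper's route gains interpretation. It exhibits $\widetilde{\mathcal C}^\pi_{\tau,\varepsilon}(\theta)$ as the pullback of the exact reward-only robust set under $\tilde\theta\mapsto\widehat R$. Theorem~\ref{thm:ct_global_sa_robust} then reads as reward-only robustness for the log-reward followed by one Jensen step.
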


Theorem~\ref{thm:ct_global_sa_robust}
provides the first theoretical robustness guarantee for entropy regularization in continuous-time RL,
whose   proof is given in Appendix~\ref{appx: proof of proposition ct_global_sa}.
It shows that  a policy that performs well across a range of dynamics can be obtained by   solving an entropy-regularized problem with a pessimistic log-transformed reward, and, conversely, that solving an entropy-regularized problem is implicitly  solving a robust RL problem under a modified reward function.

\begin{remark}[\textbf{Pessimistic reward without state entropy}]
\label{rmk:pessimistic_reward}
    The key feature is that 
a  pessimistic reward   \eqref{eq:robust_occup_measure} is  a simple log-transformation of the original reward.
This is achieved by constructing 
the 
robust set $\widetilde{\mathcal C}^{\pi}_{\tau,\varepsilon}(\theta)$ 
using a smoothed maximum under the baseline occupancy measure
$\bar{\mathrm d}_{\rho,\theta}^\pi $ over the state space and the reference policy $\mu$ over the action space.
This not only 
yields a nonuniform perturbation budget over the  spaces, allowing larger perturbations at less frequently visited states and actions,
but   also allows for showing a robust bound with the simple log-transformed reward.

In contrast, existing robustness results for discrete-time MDPs  
\cite{eysenbach2022maximum,ashlag2025state} 
  construct   robust sets using a uniform distribution over the state space, which introduces an additional      entropy term of the state dynamics  in the pessimistic objective (see Appendix \ref{appendix:uniform}
  for an analogous result in the present setting). This entropy term is difficult to compute for complex or unknown state dynamics and thus limits the applicability of those results in general RL settings.
\end{remark}

 The following proposition shows that 
 the robust set $\widetilde{\mathcal C}^{\pi}_{\tau,\varepsilon}(\theta)$  expands as the entropy regularization parameter $\tau$ increases, confirming that 
stronger entropy regularization leads to more robust policies.  
The proof is provided in Appendix~\ref{appx: proof of prop:monotonicity_tau_global}. 

\begin{proposition}
\label{prop:monotonicity_tau_global}
Let   $\pi \in \Pi$ and $\varepsilon \geq 0$. 
The set-valued map $\tau\mapsto \widetilde{\mathcal C}_{\tau, \varepsilon}^\pi(\theta)$ is nondecreasing, i.e.,  
$ 
  \widetilde{\mathcal C}_{\tau_1, \varepsilon}^\pi(\theta)
    \subseteq
  \widetilde{\mathcal C}_{\tau_2, \varepsilon}^\pi(\theta)
$ for all $0<\tau_1\le\tau_2$. 
 Moreover,
\begin{equation*}
    \widetilde{\mathcal C}_{\infty, \varepsilon}^\pi(\theta) : =\lim_{\tau\to\infty}\widetilde{\mathcal C}_{\tau, \varepsilon}^\pi(\theta)
    =
    \bigcup_{\tau>0} \widetilde{\mathcal C}_{\tau, \varepsilon}^\pi(\theta)
    =
    \Big\{
        \tilde\theta\in\Theta
        \,\Big|\,
        \sum_{s\in\mathcal S}
        \bar{\mathrm d}_{\rho,\theta}^\pi(s)
        \sum_{a\in\mathcal A_s}
        \mu(a| s)\,f_{\theta,\tilde\theta}(s,a) 
        \le
        \varepsilon
    \Big\}\,,
\end{equation*}
and
\begin{equation*}
    \widetilde{\mathcal C}_{0,\varepsilon}^\pi(\theta): = \lim_{\tau\rightarrow0}   \widetilde{\mathcal C}_{\tau, \varepsilon}^\pi(\theta)
    =
    \bigcap_{\tau>0} \widetilde{\mathcal C}_{\tau, \varepsilon}^\pi(\theta)
    =
    \Big\{
        \tilde\theta\in\Theta
        \,\Big|\,
       f_{\theta,\tilde\theta}(s,a) 
        \le
        \varepsilon\,,
   \; 
    \textnormal{$\forall a\in \mathcal A_s$,
    $\bar{\mathrm d}_{\rho,\theta}^\pi$-a.s.~$s\in \mathcal S$}
     \Big\}\, \,,
\end{equation*}
where $f_{\theta,\tilde\theta}(s,a)$ is given in Theorem~\ref{thm:ct_global_sa_robust}.
\end{proposition}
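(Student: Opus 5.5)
The plan is to treat, for fixed $\tilde\theta$, the map $\tau\mapsto \Phi(\tau):=\tau\log \mathbb E_\nu[e^{f/\tau}]$. Here $\nu$ is the probability measure on pairs $(s,a)$ given by $\nu(s,a):=\bar{\mathrm d}^\pi_{\rho,\theta}(s)\mu(a| s)$, and $f=f_{\theta,\tilde\theta}$. The set $\widetilde{\mathcal C}^\pi_{\tau,\varepsilon}(\theta)$ is exactly $\{\tilde\theta : \Phi_{\tilde\theta}(\tau)\le\varepsilon\}$. Everything therefore reduces to properties of this scalar function, which is the (scaled) cumulant generating function, or power mean of $e^{f}$, over a finite probability space.

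\textbf{Monotonicity.} I will show that $\Phi$ is nonincreasing in $\tau$. Writing $p=1/\tau$, we have $\Phi=\frac1p\log\mathbb E_\nu[e^{pf}]$, which is $\log$ of the $L^p(\nu)$ norm of $e^{f}$. By Lyapunov's (Jensen's) inequality this is nondecreasing in $p$, hence nonincreasing in $\tau$. Alternatively, one can differentiate:
\[
\Phi'(\tau)=\log\mathbb E_\nu[e^{f/\tau}]-\mathbb E_{\nu_\tau}[f/\tau]=-\mathrm{KL}(\nu_\tau\|\nu)\le 0,
\]
where $\nu_\tau\propto e^{f/\tau}\nu$. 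Consequently $\Phi(\tau_1)\le\varepsilon$ implies $\Phi(\tau_2)\le\varepsilon$ for $\tau_2\ge\tau_1$, which gives the inclusion. One subtlety must be handled first. If $\bar{\mathrm d}^\pi_{\rho,\tilde\theta}(s)=0$ at a state with $\nu(s)>0$, then $f=+\infty$ there, and $\Phi\equiv+\infty$ for all $\tau$. With the conventions $e^{+\infty}=\infty$ and $\mathbb E_\nu f=+\infty$, such a $\tilde\theta$ lies in none of the sets. States with $\nu(s)=0$ contribute nothing. This is why the limiting $\tau\to0$ set is stated $\bar{\mathrm d}^\pi_{\rho,\theta}$-a.s.

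\textbf{Limits.} Monotonicity gives the equalities $\lim_{\tau\to\infty}=\bigcup_{\tau>0}$ and $\lim_{\tau\to0}=\bigcap_{\tau>0}$. For $\tau\to\infty$, a Taylor expansion on the finite support gives $\Phi(\tau)\downarrow \mathbb E_\nu[f]$. Because $\Phi$ decreases monotonically to $\mathbb E_\nu f$, we have $\Phi(\tau)\ge\mathbb E_\nu f$ for every $\tau$, so $\bigcup_\tau\{\Phi(\tau)\le\varepsilon\}\subseteq\{\mathbb E_\nu f\le\varepsilon\}$.

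The reverse inclusion is the main obstacle. When $\mathbb E_\nu f=\varepsilon$ exactly, $\Phi(\tau)>\varepsilon$ for all finite $\tau$ unless $f$ is $\nu$-a.s. constant, so the union is only $\{\mathbb E_\nu f<\varepsilon\}\cup\{f\equiv\varepsilon \text{ a.s.}\}$. I would therefore interpret the limit set as the set-theoretic limit and prove equality up to this boundary. Concretely, I would show $\{\mathbb E_\nu f<\varepsilon\}\subseteq\bigcup_\tau$ by convergence, and then either argue that the displayed equality should be read with the boundary handled by closure, or note that it holds exactly when equality cases are nondegenerate. I will flag this point explicitly rather than hide it.

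For $\tau\to0$, on a finite space $\Phi(\tau)\uparrow\operatorname{ess\,sup}_\nu f=\max_{\nu(s,a)>0}f(s,a)$. This follows from the standard bound
\[
\max f+\tau\log\nu_{\min}\le\Phi(\tau)\le\max f .
\]
Since $\Phi(\tau)\le\operatorname{ess\,sup}f$ for all $\tau$, the condition $\operatorname{ess\,sup} f\le\varepsilon$ implies membership for every $\tau$. Conversely, if $\Phi(\tau)\le\varepsilon$ for all $\tau$, letting $\tau\to0$ yields $\operatorname{ess\,sup}f\le\varepsilon$. Since $\mu(\cdot| s)$ has full support on $\mathcal A_s$, the condition $\operatorname{ess\,sup}_\nu f\le\varepsilon$ is exactly ``$f(s,a)\le\varepsilon$ for all $a\in\mathcal A_s$, for $\bar{\mathrm d}^\pi_{\rho,\theta}$-a.e. $s$'', which is the stated characterization.
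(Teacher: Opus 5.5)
Your argument is correct and follows the paper's proof essentially step for step. The paper likewise differentiates to get $\frac{\mathrm d}{\mathrm d\tau}\Phi_\tau=-\mathrm{KL}(q_\tau\|\bar\nu_\mu)\le 0$ (your $\nu_\tau$ and $\nu$ are its $q_\tau$ and $\bar\nu_\mu$), and it identifies the limiting sets through the log-sum-exp limits $\Phi_\tau\to\max_{\bar\nu_\mu>0}f$ as $\tau\to0$ and $\Phi_\tau\to\mathbb E_{\bar\nu_\mu}f$ as $\tau\to\infty$.

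Your boundary caveat is also right, and it is a point the paper's proof passes over: it goes straight from the pointwise limit of $\Phi_\tau(\tilde\theta)$ to the limit set $\{\mathbb E_\nu f\le\varepsilon\}$. Because Jensen gives $\Phi(\tau)>\mathbb E_\nu f$ for every finite $\tau$ unless $f$ is $\nu$-a.s.\ constant, the union $\bigcup_{\tau>0}\widetilde{\mathcal C}^\pi_{\tau,\varepsilon}(\theta)$ is exactly $\{\mathbb E_\nu f<\varepsilon\}\cup\{f\equiv\varepsilon\ \nu\text{-a.s.}\}$. So the displayed ``$\le$'' equality holds only up to that boundary, whereas the $\tau\to0$ characterization is exact.
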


\para{Robustness Guarantee with State Transition Intensity}
A limitation of  Theorem  
\ref{thm:ct_global_sa_robust} is that   the robust set $ \widetilde {\mathcal C}^\pi_{\varepsilon,\tau}(\theta)$ in 
\eqref{eq:robust_set_entropy} is defined directly in terms of ratios of occupancy measures, making it difficult to determine whether a given perturbed model belongs to this set.
To improve the interpretability, we provide a robustness result in which the set is characterized in terms of the discrepancy between the transition rates of the baseline and perturbed state dynamics.

To this end, 
for any perturbed model $\tilde{\theta}\in \Theta$, 
we define the following   the \emph{local relative entropy rate} 
of the two CTMCs: 
\begin{equation} \label{eq:action-wise local relative entropy} \ell(\lambda^a_\theta,\lambda^a_{\tilde\theta})(s)
    :=
    \sum_{s' \neq s}
    \left[
        \lambda^a_{ss',\theta}
        \log\frac{\lambda^a_{ss',\theta}}{\lambda^a_{ss',\tilde\theta}}
        -
        (\lambda^a_{ss',\theta}
        -
        \lambda^a_{ss',\tilde\theta})
    \right]\ge 0\,, \quad \forall s \in \mathcal S, a \in \mathcal A_s\,,
\end{equation}
which  measures the instantaneous relative entropy per unit time between the path measures of the baseline model $\mathbb P^\pi_{\theta}$
  and the perturbed model 
  $\mathbb P^\pi_{\tilde\theta}$ (see also~\cite{opper2007variational}). It is the continuous-time analogue of the KL divergence between discrete-time one-step transition kernels.
We further define the local state-reward perturbation cost rate of the model $\tilde \theta$ by
\begin{equation}
\label{eq:perturbation_cost}
c_{\theta,\tilde\theta}(s,a):= 
\ell(\lambda^a_\theta,\lambda^a_{\tilde\theta})(s)
                    -
                    r \log \frac{R_{\tilde\theta}(s,a)}{R_\theta(s,a)}\,,
                    \quad \forall s\in \mathcal S, a\in \mathcal A_s\,,
   \end{equation}
which
quantifies the magnitude of  perturbations that jointly affect the state dynamics and instantaneous rewards.
The cost $c_{\theta,\tilde\theta}$ increases as the perturbed intensity $\lambda^a_{ss',\tilde\theta}$ deviates from the baseline 
$\lambda^a_{ss',\theta}$, or as the perturbed model assigns a lower reward to a state--action pair than the baseline reward in the training environment.

We assume that the perturbed dynamics and the baseline model share the same jump structure: they admit the same possible transitions, but may assign different rates to those transitions.

\begin{assumption}
\label{assump:support_match}
For all  $\tilde\theta\in\Theta$,  $s,s' \in \mathcal S$ and $a \in \mathcal A_s$, $ \lambda^a_{ss',\theta} > 0 $ if and only if $\lambda^a_{ss',\tilde\theta} > 0$.
\end{assumption}

We now state a robustness result in which the robust set is explicitly characterized in terms of the perturbation cost  
$c_{\theta,\tilde\theta}$.

 \begin{theorem}
\label{thm:maxent_robust_ctmdp}
Suppose  that Assumption~\ref{assump:support_match} holds. For all  $\tau>0$, $\pi \in \Pi$, and   $\varepsilon \geq 0$, 
\small
\begin{equation}
\label{eq:main_theorem_expanded}
\begin{aligned}
  &  \inf_{\tilde\theta \in \mathcal C_{\tau,\varepsilon}^\pi(\theta)}
    \mathbb E_\rho^{\mathbb P^\pi_{\tilde\theta}}
    \bigg[
        \int_0^\infty e^{-rt}
        \sum_{a \in \mathcal A_{S_t}}
        \pi(a | S_t)\,
        R_{\tilde\theta}(S_t,a)\,
        \mathrm dt
    \bigg]
    \\
    &\ge
    \exp\bigg(
        \mathbb E_\rho^{\mathbb P^\pi_\theta}
        \bigg[
            \int_0^\infty e^{-rt}
            \bigg(
                \sum_{a \in \mathcal A_{S_t}}
                \pi(a | S_t)\, \big( r \log R_\theta(S_t,a) \big)
                -
                \tau\,\mathrm{KL}(\pi \,\|\, \mu)(S_t)
            \bigg)
            \mathrm dt 
        \bigg] - \frac{\varepsilon}{r} - \log r
    \bigg)\,,
\end{aligned}
\end{equation}
where the robust set 
$\mathcal C_{\tau, \varepsilon}^\pi(\theta)$
is given by
\small
\begin{equation}
\label{eq:robust_set}
\begin{aligned}
    \mathcal C_{\tau, \varepsilon}^\pi(\theta)
    :=
    \left\{
        \tilde\theta \in \Theta
        \,\Big|\,
        \sum_{s \in \mathcal S}
        \bar{\mathrm d}_{\rho,\theta}^\pi(s)
       \tau  \log \left[
            \sum_{a \in \mathcal A_s}
            \mu(a | s)
            \exp\left(
                \tfrac{
                  c_{\theta,\tilde\theta}(s,a)
                }{\tau}
            \right)
        \right]
        \le
        \varepsilon
    \right\}\,,
\end{aligned}
\end{equation}
with the perturbation cost $c_{\theta,\tilde\theta}(s,a)$ defined in \eqref{eq:perturbation_cost}.
\end{theorem}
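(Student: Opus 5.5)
The plan is to take logarithms of the perturbed value, move from the perturbed path law $\mathbb P^\pi_{\tilde\theta}$ to the baseline law $\mathbb P^\pi_\theta$ by a Donsker--Varadhan (Gibbs) change of measure on path space, and then absorb the resulting perturbation cost into the KL term using the Gibbs variational inequality over actions with reference $\mu$.

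\emph{Step 1 (exponential-horizon representation).} Let $T\sim\mathrm{Exp}(r)$ be independent of the chain. Then the left-hand side of \eqref{eq:main_theorem_expanded} equals, for a fixed $\tilde\theta$,
\[
V(\tilde\theta)=\tfrac1r\,\mathbb E^{\mathbb P^\pi_{\tilde\theta}\otimes \mathrm{Exp}(r)}_\rho\big[X_{\tilde\theta}(S_T)\big],
\qquad X_{\tilde\theta}(s):=\textstyle\sum_{a}\pi(a|s)R_{\tilde\theta}(s,a)>0 .
\]
In the same way, every discounted baseline integral $\mathbb E^{\mathbb P^\pi_\theta}_\rho\big[\int_0^\infty e^{-rt}g(S_t)\,\mathrm dt\big]$ equals $\mathbb E\big[\int_0^T g(S_t)\,\mathrm dt\big]$, and also equals $\tfrac1r\mathbb E[g(S_T)]$.

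\emph{Step 2 (change of measure).} Let $Q_\theta$ and $Q_{\tilde\theta}$ denote the laws of the stopped path $(S_{t\wedge T})$ together with $T$. By Assumption~\ref{assump:support_match}, $Q_\theta\ll Q_{\tilde\theta}$. The Donsker--Varadhan inequality then gives
\[
\log \mathbb E^{Q_{\tilde\theta}}[X_{\tilde\theta}]\ \ge\ \mathbb E^{Q_\theta}[\log X_{\tilde\theta}]-\mathrm{KL}(Q_\theta\|Q_{\tilde\theta}).
\]
By the Girsanov formula for CTMCs, using the log-likelihood ratio of jump processes with generators $\lambda^\pi_\theta$ and $\lambda^\pi_{\tilde\theta}$ (the $T$-marginals coincide), we get
\[
\mathrm{KL}(Q_\theta\|Q_{\tilde\theta})=\mathbb E\Big[\int_0^T \ell(\lambda^\pi_\theta,\lambda^\pi_{\tilde\theta})(S_t)\,\mathrm dt\Big].
\]
Since $\ell$ is jointly convex in its two arguments, $\ell(\lambda^\pi_\theta,\lambda^\pi_{\tilde\theta})(s)\le\sum_a\pi(a|s)\,\ell(\lambda^a_\theta,\lambda^a_{\tilde\theta})(s)$. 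Concavity of $\log$ gives $\log X_{\tilde\theta}(s)\ge\sum_a\pi(a|s)\log R_{\tilde\theta}(s,a)$. Converting back to discounted integrals via Step 1, we obtain
\[
\log\big(r V(\tilde\theta)\big)\ \ge\ \mathbb E^{\mathbb P^\pi_\theta}_\rho\Big[\int_0^\infty e^{-rt}\sum_a\pi(a|S_t)\big(r\log R_\theta(S_t,a)-c_{\theta,\tilde\theta}(S_t,a)\big)\,\mathrm dt\Big].
\]
Here I wrote $r\log R_{\tilde\theta}=r\log R_\theta+r\log(R_{\tilde\theta}/R_\theta)$ and grouped the ratio term with $\ell$ to form $c_{\theta,\tilde\theta}$ from \eqref{eq:perturbation_cost}.

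\emph{Step 3 (Gibbs inequality and the robust set).} For each $s$, the Gibbs variational principle gives
\[
\sum_a\pi(a|s)\,c_{\theta,\tilde\theta}(s,a)\le \tau\,\mathrm{KL}(\pi\|\mu)(s)+\tau\log\sum_a\mu(a|s)\,e^{c_{\theta,\tilde\theta}(s,a)/\tau}.
\]
Substituting this bound, the last term integrates to $\tfrac1r\sum_s\bar{\mathrm d}^\pi_{\rho,\theta}(s)\,\tau\log[\cdots]$, which is at most $\varepsilon/r$ when $\tilde\theta\in\mathcal C^\pi_{\tau,\varepsilon}(\theta)$. Exponentiating and dividing by $r$ yields \eqref{eq:main_theorem_expanded} for each such $\tilde\theta$; taking the infimum over $\tilde\theta$ finishes the proof.

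The main obstacle is Step 2. It requires justifying the path-space KL formula for the stopped chain, namely the Radon--Nikodym density
\[
\prod_{\text{jumps}}\frac{\lambda^\pi_{S_-S,\theta}}{\lambda^\pi_{S_-S,\tilde\theta}}\;\exp\Big(\int_0^T(\lambda^\pi_{S_t,\tilde\theta}-\lambda^\pi_{S_t,\theta})\,\mathrm dt\Big),
\]
where $\lambda^\pi_{s,\theta}:=\sum_{s'\neq s}\lambda^\pi_{ss',\theta}$ denotes the total exit rate, together with its expectation under $Q_\theta$ computed via the compensator, including integrability on the random horizon. Finiteness of $\mathcal S$ keeps all rates bounded, so I expect dominated convergence to suffice. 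A secondary check is that the mixed rates $\lambda^\pi$ inherit the common-support property from Assumption~\ref{assump:support_match}, which is needed for $\ell(\lambda^\pi_\theta,\lambda^\pi_{\tilde\theta})$ to be finite.
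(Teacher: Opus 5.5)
Your proposal is correct and follows essentially the paper's route: change of measure to the baseline path law, the Girsanov/compensator computation giving the relative-entropy rate $\ell(\lambda^\pi_\theta,\lambda^\pi_{\tilde\theta})$, the joint-convexity bound by the action-wise rates, Jensen on $\log$ of the $\pi$-averaged reward, and finally the Gibbs/Donsker--Varadhan inequality over actions combined with the robust-set constraint. The only difference is cosmetic: you fold the paper's two Jensen steps (over paths, then over $r e^{-rt}\,\mathrm dt$) and its Fubini computation into a single Donsker--Varadhan inequality on path space augmented with an independent $\mathrm{Exp}(r)$ horizon.
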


Compared with Theorem  
\ref{thm:ct_global_sa_robust}, 
 Theorem 
\ref{thm:maxent_robust_ctmdp}
constructs the robust set more explicitly by directly aggregating the local perturbation cost 
$c_{\theta,\tilde\theta}$. 
The proof  starts by  lower bounding the performance of $\pi$ in a perturbed model by 
its performance in the baseline model  and the likelihood-ratio $\xi_t=\log \frac{\mathrm d \mathbb P^\pi_{ \theta}}{\mathrm d \mathbb P^\pi_{\tilde \theta}}\big|_{\mathcal F_t}$ at all time $t\ge 0$, where $\mathcal F_t$ is the observation   $\sigma$-algebra up to time $t$.
We then establish a semi-martingale decomposition for $(\xi_t)_{t\geq 0}$ showing  
$
\textrm d \xi_t =\ell(\lambda_{\theta}^\pi, \lambda_{\tilde \theta}^\pi) (S_t) \textrm d t +\textrm d  M_t,
$ 
where $M_t $ is a zero-mean martingale  under $\mathbb P^\pi_{\theta}$, and 
$\ell(\lambda_{\theta}^\pi, \lambda_{\tilde \theta}^\pi) $
is a $\pi$-aggregated relative entropy rate between $\mathbb P^\pi_{\theta}$ and $\mathbb P^\pi_{\tilde \theta}$.  
We further control  this aggregated rate function    with the (pointwise) local relative entropy rate   
$\ell(\lambda^a_\theta,\lambda^a_{\tilde\theta})$, and   construct the robust set using  the duality of the KL divergence
at each state.
The details are    given in  Appendix~\ref{appendix: proof of theorem}.

 We emphasize that 
 constructing  
 $ \mathcal C_{\tau,\varepsilon}^\pi(\theta)$ 
 using the local relative entropy rate 
$\ell(\lambda^a_\theta,\lambda^a_{\tilde\theta})$ 
  is crucial   
for continuous-time models. 
In contrast, the robust set derived for discrete-time MDPs in \cite[Theorem 4.2]{eysenbach2022maximum} degenerates to an empty set  as the time step size tends to zero. We illustrate this phenomenon in the following simple example.

\begin{example}
[\textbf{Degeneracy of discrete-time robust sets}]
\label{example:robust set}
   Consider for simplifying an uncontrolled CTMDP, an unperturbed reward, and a finite-horizon  objective over the interval  $[0,1]$. For any given   time grid $\{t_i\}_{i=0}^N$  with $t_i=i\Delta t$ and $\Delta t=1/N$,
   \cite{eysenbach2022maximum} constructs the   robust set of   the corresponding discrete-time MDP as:
  \begin{equation*}
  %\label{eq:discrete_time_robust}
  \mathcal C_{\Delta t}(\theta)=\left\{\tilde {\theta}\in \Theta \, \bigg\vert \,\mathbb E_{p_{\theta}}\left[\sum_{i=0}^N\log \sum_{s_{t+1}\in \mathcal S} \frac{p_{\theta}(s_{t+1}|s_t)}{  {p}_{\tilde \theta}(s_{t+1}|s_t)} \right]\le \varepsilon\right\}\,,
   \end{equation*}
   where $p_{\theta}$ and $p_{\tilde \theta}$ are the one-step transition probabilities for the baseline and perturbed (uncontrolled) MDPs, respectively. As $\Delta t\to 0$, using the definition of the transition intensities,  
   $$
   \log\sum_{s'\in \mathcal S} \frac{p_{\theta}(s'|s)}{  {p}_{\tilde \theta}(s'|s)}
   =\log \left( 1+\sum_{s'\not =s}\frac{\lambda_{ss',\theta}}{\lambda_{ss',\tilde\theta}} +O(\Delta t)\right)
   =\log \left( 1+\sum_{s'\not =s}\frac{\lambda_{ss',\theta}}{\lambda_{ss',\tilde\theta}} \right)+O(\Delta t).
   $$
This implies that 
   the left-hand-side of the constraint in 
   $ \mathcal C_{\Delta t}(\theta)$ is of the order $\mathcal O(N)$ even when $\tilde \theta=\theta$.
   Consequently, for a fixed $\varepsilon>0$, $\mathcal C_{\Delta t}(\theta)$ degenerates to an empty set as $\Delta t\to 0$.
\end{example}

  Stronger entropy regularization yields a larger robust set  $\tau\mapsto \mathcal C_{\tau, \varepsilon}^\pi(\theta)$, 
analogous to Proposition
\ref{prop:monotonicity_tau_global}.

\begin{proposition}
\label{prop:monotonicity_tau}
Suppose  that Assumption~\ref{assump:support_match} holds, and let   $\pi \in \Pi$ and $\varepsilon \geq 0$. 
Then the set-valued map $\tau\mapsto \mathcal C_{\tau, \varepsilon}^\pi(\theta)$ is nondecreasing, i.e.,  
$ 
    \mathcal C_{\tau_1, \varepsilon}^\pi(\theta)
    \subseteq
    \mathcal C_{\tau_2, \varepsilon}^\pi(\theta)
    $ 
    for all
    $   0<\tau_1\le\tau_2$.
Moreover,
\begin{equation*}
    \mathcal C_{\infty, \varepsilon}^\pi(\theta) : =\lim_{\tau\to\infty}\mathcal C_{\tau, \varepsilon}^\pi(\theta)
    =
    \bigcup_{\tau>0}\mathcal C_{\tau, \varepsilon}^\pi(\theta)
    =
    \Big\{
        \tilde\theta\in\Theta
        \,\Big|\,
        \sum_{s\in\mathcal S}
        \bar{\mathrm d}_{\rho,\theta}^\pi(s)
        \sum_{a\in\mathcal A_s}
        \mu(a| s)\,c_{\theta,\tilde\theta}(s,a) 
        \le
        \varepsilon
    \Big\}\,,
\end{equation*}
and
\begin{equation*}
    \mathcal C_{0,\varepsilon}^\pi(\theta): = \lim_{\tau\rightarrow0}\mathcal C_{\tau, \varepsilon}^\pi(\theta)
    =
    \bigcap_{\tau>0}\mathcal C_{\tau, \varepsilon}^\pi(\theta)
    =
    \Big\{
        \tilde\theta\in\Theta
        \,\Big|\,
        \sum_{s\in\mathcal S}
        \bar{\mathrm d}_{\rho,\theta}^\pi(s)\,
        \max_{a\in\mathcal A_s} c_{\theta,\tilde\theta}(s,a) 
        \le
        \varepsilon
    \Big\}\,,
\end{equation*}
where $c_{\theta,\tilde\theta}(s,a)$ is defined in 
\eqref{eq:perturbation_cost}.
 
\end{proposition}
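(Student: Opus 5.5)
The argument is a pointwise study, for each fixed state, of the log-sum-exp (soft-max) functional, followed by integration against the baseline occupancy measure. Fix $\tilde\theta\in\Theta$ and $s\in\mathcal S$. By Assumption~\ref{assump:support_match} each $c_{\theta,\tilde\theta}(s,a)$ is a finite real number, since $\ell(\lambda^a_\theta,\lambda^a_{\tilde\theta})(s)<\infty$ and $R_\theta,R_{\tilde\theta}>0$. Write $c_a:=c_{\theta,\tilde\theta}(s,a)$ and define
\[
g_s(\tau):=\tau\log\sum_{a\in\mathcal A_s}\mu(a|s)\,e^{c_a/\tau},\qquad \tau>0 .
\]
Then $\tilde\theta\in\mathcal C^\pi_{\tau,\varepsilon}(\theta)$ if and only if $G(\tau):=\sum_s\bar{\mathrm d}^\pi_{\rho,\theta}(s)\,g_s(\tau)\le\varepsilon$. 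The key observation is that $g_s(\tau)=\|e^{c}\|_{L^{1/\tau}(\mu(\cdot|s))}$ after taking logarithms. More precisely, $g_s(\tau)=\log\big(\mathbb E_{\mu}[X^{p}]^{1/p}\big)$ with $X=e^{c_A}$ and $p=1/\tau$. By Lyapunov's (power-mean) inequality for the probability measure $\mu(\cdot|s)$, $p\mapsto\|X\|_p$ is nondecreasing, so $g_s$ is nonincreasing in $\tau$.

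An alternative route, which I would include as a check, is to differentiate. This gives $g_s'(\tau)=\log Z-\frac{1}{\tau}\mathbb E_{q}[c]=-\mathrm{KL}(q\|\mu(\cdot|s))\le0$, where $q\propto\mu e^{c/\tau}$ is the Gibbs tilt. Either way $G$ is nonincreasing. Hence $\tau_1\le\tau_2$ implies $G(\tau_2)\le G(\tau_1)\le\varepsilon$, which gives the inclusion $\mathcal C_{\tau_1,\varepsilon}^\pi(\theta)\subseteq\mathcal C_{\tau_2,\varepsilon}^\pi(\theta)$. Monotonicity also identifies the set-limits: the limit as $\tau\to\infty$ is the increasing union, and the limit as $\tau\to0$ is the decreasing intersection.

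The next step is to identify the endpoints of $g_s$. As $\tau\to\infty$ ($p\to0$), the standard limit $\|X\|_p\to\exp\mathbb E_\mu[\log X]$ gives $g_s(\tau)\downarrow\sum_a\mu(a|s)c_a$. This can be checked via $\log\mathbb E e^{c/\tau}=\frac1\tau\mathbb E c+O(\tau^{-2})$ for finitely many actions. As $\tau\to0$ ($p\to\infty$), the full support $\operatorname{supp}\mu(\cdot|s)=\mathcal A_s$ yields $g_s(\tau)\uparrow\max_{a\in\mathcal A_s}c_a$. This follows from the sandwich $\max_a c_a+\tau\log\mu(a^*|s)\le g_s(\tau)\le\max_a c_a$. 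Since $\mathcal S$ is finite, these limits pass to $G$ by monotone convergence of the finite sum. Thus $G(\tau)\downarrow G_\infty:=\sum_s\bar{\mathrm d}(s)\sum_a\mu(a|s)c_a$ and $G(\tau)\uparrow G_0:=\sum_s\bar{\mathrm d}(s)\max_a c_a$.

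The final step converts these limits into the set identities, and it is where the care lies, because the constraint is non-strict. For the intersection, $\tilde\theta\in\bigcap_\tau\mathcal C_\tau$ means $G(\tau)\le\varepsilon$ for all $\tau$. Since $G\le G_0$ always and $G(\tau)\to G_0$, this is equivalent to $G_0\le\varepsilon$. For the union, we need $G(\tau)\le\varepsilon$ for some $\tau$ to be equivalent to $G_\infty\le\varepsilon$. The forward direction holds because $G_\infty\le G(\tau)$. The reverse direction is the main obstacle: if $G_\infty=\varepsilon$ exactly while $G(\tau)>G_\infty$ for every $\tau$, then $\tilde\theta$ lies in the claimed closed set but in no $\mathcal C_\tau$. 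By the derivative formula, $G(\tau)=G_\infty$ for some finite $\tau$ only when $c(s,\cdot)$ is constant on $\mathcal A_s$ for $\bar{\mathrm d}$-a.e.\ $s$. So equality of the union with the closed set requires either this boundary case to be excluded, or the limit $\lim_{\tau\to\infty}$ to be interpreted as a set-theoretic lim inf/lim sup. I would handle this by stating the union identity up to the boundary $\{G_\infty=\varepsilon\}$, equivalently with the union taken over $\tau\in(0,\infty]$, while noting that the strict-inequality set $\{G_\infty<\varepsilon\}$ is always contained in the union. I would then check whether the authors' intended reading absorbs this point. The same bookkeeping, with $f_{\theta,\tilde\theta}$ in place of $c_{\theta,\tilde\theta}$ and the joint measure $\bar{\mathrm d}\otimes\mu$, mirrors Proposition~\ref{prop:monotonicity_tau_global}.
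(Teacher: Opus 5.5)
Your proposal is correct and follows essentially the paper's route: the paper also differentiates $g_s$ to get $\frac{\mathrm d}{\mathrm d\tau}g_s(\tau)=-\mathrm{KL}(q_\tau(\cdot\mid s)\|\mu(\cdot\mid s))\le 0$, takes the statewise log-sum-exp limits $\max_{a\in\mathcal A_s}c_{\theta,\tilde\theta}(s,a)$ and $\sum_{a}\mu(a\mid s)c_{\theta,\tilde\theta}(s,a)$, and passes them through the finite sum over states (your power-mean argument is just an equivalent way to get the same monotonicity). Your boundary caveat is also right, and the paper does not address it, since its proof reads off $\mathcal C^\pi_{\infty,\varepsilon}(\theta)$ directly from $\lim_{\tau\to\infty}g_s(\tau)$: by strict Jensen, $g_s(\tau)>\sum_a\mu(a\mid s)c_{\theta,\tilde\theta}(s,a)$ for every finite $\tau$ unless $c_{\theta,\tilde\theta}(s,\cdot)$ is constant on $\mathcal A_s$, so a $\tilde\theta$ with $G_\infty=\varepsilon$ exactly (e.g.\ $\varepsilon=0$ and a reward-only perturbation making $c_{\theta,\tilde\theta}(s,\cdot)$ nonconstant with zero $\mu(\cdot\mid s)$-mean) lies in the stated closed set but in no $\mathcal C^\pi_{\tau,\varepsilon}(\theta)$, whereas the $\tau\to 0$ intersection identity holds exactly, as you argue.
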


 Note that in the limit approaching the unregularized case $\tau \rightarrow 0  $,
 $ \mathcal C_{0,\varepsilon}^\pi(\theta)$ in Proposition 
\ref{prop:monotonicity_tau} 
measures model perturbations in an averaged sense over the state space, whereas  
 $\widetilde{  \mathcal C}_{0,\varepsilon}^\pi(\theta)$ in 
Proposition    \ref{prop:monotonicity_tau_global} measures  the model perturbation in a stronger pointwise manner over the state space.  
The proof of Proposition~\ref{prop:monotonicity_tau} is given in Appendix~\ref{appendix: proof of the monotonicity}.

\subsection{Illustration of Robust Sets for Market Making}
\label{sec:worked_example}

Theorems  
\ref{thm:ct_global_sa_robust} and 
\ref{thm:maxent_robust_ctmdp} provide two complementary ways to quantify the robustness of entropy-regularized objectives under state perturbations: one via a global perturbation of the state-occupancy measures, which is more intrinsic to the overall optimization objective, and the other via a local perturbation of the state transition intensities, which is a more interpretable model-level perturbation. In this section, we illustrate these two robust sets through a   CTMDP example   arising from    dynamic market making problems in finance 
\cite{cartea2015algorithmic}.

In this   problem,   an agent (market maker) 
continuously posts bid and ask quotes over time to provide liquidity of a risky asset, while optimally balancing expected trading profits against their own inventory risk.
The state $q \in \mathcal S=\{-10,-9,\ldots,10\}$
denotes the agent’s inventory level, and the action
$\delta =(\delta^-,\delta^+) \in \mathcal{A} = \Delta_{20} \times \Delta_{20}$ 
denotes the bid and ask quotes (relative to the mid-price), chosen from a 20-point uniform discretization of
$[0.06, 0.24]$. 
The state dynamics follows a controlled CTMC 
 in which, 
 given a state--action pair $(q,\delta)$, 
transitions occur only to adjacent inventory levels  $q \pm 1$ with    upward rate $\lambda^\delta_{q,q+1,\theta} = \mathbf{1}_{\{q<10\}}\Lambda_\theta(\delta^-)$ and   downward rate $\lambda^\delta_{q,q-1,\theta} = \mathbf{1}_{\{q>-10\}}\Lambda_\theta(\delta^+)$,
where $ \theta=(\alpha_\Lambda, \beta_\Lambda)$
and  
$\Lambda_\theta(\delta) = \tfrac{\lambda_R}{1+\exp(\alpha_\Lambda+\beta_\Lambda\delta)}$ for a fixed  $ \lambda_R > 0$.
The instantaneous  reward  is  
$ R_\theta(q,\delta)
    =\lambda^\delta_{q,q+1,\theta} \delta^-
    +
   \lambda^\delta_{q,q-1,\theta}  \delta^+
    -
    \frac{\gamma}{2}\sigma^2 q^2$,
  which combines the expected profit from the earned spread with a quadratic penalty on the inventory level.
We take the reference policy $\mu$ to be the uniform distribution over $\mathcal A$, 
the discount factor $r=1$,
and the initial state distribution $\rho =\delta_{\{q=0\}}$.

Note that perturbations in the parameter $\theta$ affect both the state transition dynamics and the reward function.
To illustrate the robustness guarantees, we fix a behavior policy  $\pi$ designed to mimic the inventory-stabilizing structure of the optimal market-making policy, and evaluate the robust sets 
$\widetilde{\mathcal C}^{\pi}_{\tau,\varepsilon}(\theta)$   in~\eqref{eq:robust_set_entropy}
and    $\mathcal C^{\pi}_{\tau,\varepsilon}(\theta)$   in~\eqref{eq:robust_set} for 
a fixed budget parameter $\varepsilon =0.01$
and   baseline parameters $\theta=(-1,10)$,   which are calibrated from real data in~\cite{barzykin2023algorithmic}.
{Further details are in Appendix~\ref{app: detailed worked example}.
}

\begin{figure}[!ht]
    \centering
    \includegraphics[width=1.0\linewidth]{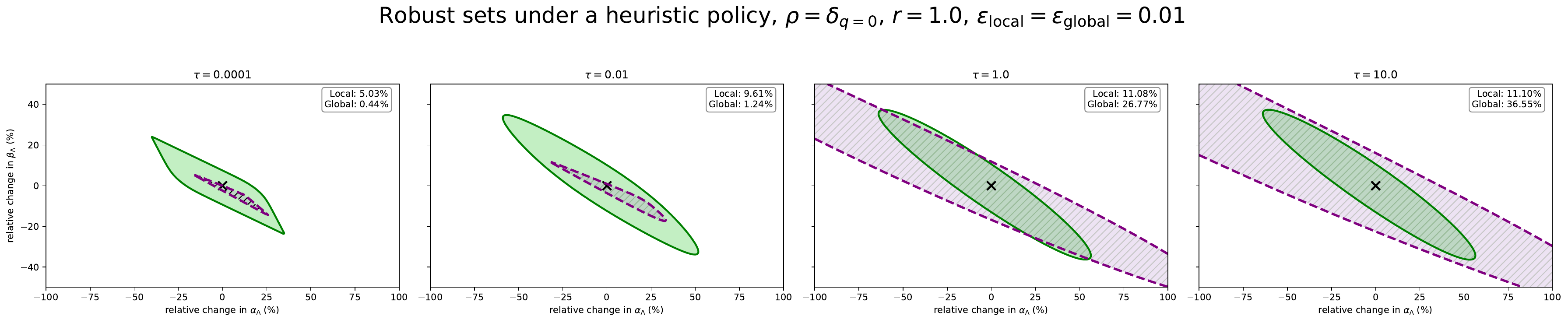}
    \caption{Robust regions for the market making example with $r=1$ and $\varepsilon=0.01$. Each panel shows simultaneous perturbations of $\alpha_\Lambda$ and $\beta_\Lambda$ for a fixed $\tau$. The green region is the local intensity-based robust set $\mathcal C^\pi_{\tau,\varepsilon}(\theta)$, and the purple region is the global occupancy-based robust set
$\widetilde{\mathcal C}^\pi_{\tau,\varepsilon}(\theta)$; the black cross marks the baseline parameter. }
    \label{fig:feasible-reference}
    \vspace{-4mm}
\end{figure}

Figure~\ref{fig:feasible-reference} plots certified robust regions under relative perturbations of $\theta=(\alpha_\Lambda,\beta_\Lambda)$, with axes
showing percentage changes from the baseline $\theta$. The green region is the local intensity-based certificate, the purple region is the global occupancy-based certificate.
Both regions expand with $\tau$, as predicted by
Propositions~\ref{prop:monotonicity_tau_global}
and~\ref{prop:monotonicity_tau}. 
Moreover, as shown in the propositions, 
when $\tau\downarrow0$, the log-sum-exp criteria become worst-case constraints.
The global set is controlled by the largest state--action occupancy/reward distortion, while the local set uses an occupancy-weighted sum of statewise worst action perturbations, making the global certificate more restrictive here.

As $\tau$ increases, the constraints become averaged perturbation budgets. The global set can then exceed the local set because it measures the realized effect after the policy and dynamics interact. Here, the policy is designed to
stabilize inventory near zero, so sizable changes in
$(\alpha_\Lambda,\beta_\Lambda)$ may still leave the discounted occupancy concentrated around central states. Such perturbations can have small global occupancy cost but large local relative-entropy cost.
Thus the two certificates are complementary: the local set gives an interpretable
model-level condition  and is relatively
stable across policy and initial-distribution choices, whereas the global set
measures the realized change in the controlled process and can be less
conservative, but is more sensitive to the policy and initial distribution. 

Appendix~\ref{app: detailed worked example} provides additional numerical comparisons of these robust sets. Appendix~\ref{app:certificate} presents an empirical analysis of certificate tightness, demonstrating that the certificate value acts as a continuous predictor of performance degradation and quantifying the conservatism of the certified region.

\vspace{-1mm}

% ======================================================================
\section{Experimental Results}\label{sec:experiments}
% ======================================================================

We validate the robustness theory on two event-driven control problems: a criss-cross queueing network (presented here) and single-asset market making (Appendix~\ref{app:robustness}). Both are implemented as continuous-time Markov jump processes matching the theoretical setting. 
Appendix~\ref{app:discretization_extended} illustrates the benefits of this event-driven implementation over a fixed–time-grid implementation. 

We evaluate on the criss-cross queueing network described in~\cite{dai2022queueing} with three job classes and
two servers.
We use the heavy-traffic instance: $\lambda = [1.5, 1.5, 1.5]$,
$\mu = [2.0, 3.0, 2.0]$, $h = [3.0, 1.0, 2.0]$,
$Q_{\max} = 50$, $T = 50$.  
The environment is simulated as a continuous-time Markov jump process. At each step, the next event time is sampled from the exponential clock with rate equal to the total arrival and effective service intensity, and the reward accumulated over the inter-event interval is computed exactly. 
Thus the implementation is \emph{event-driven}, unlike most contemporary works in the Deep RL literature which utilize a fixed time discretization (see Appendix~\ref{app:discretization_extended} for a comparison). 
We train three classes of neural network policies: (i) $\pi_{\rm std}$ trained via policy gradient \emph{without} entropy regularization, (ii) $\pi_{\tau}$ policies trained using various $\tau>0$, and (iii) $\pi_{\rm noisy}$ $\epsilon$-greedy policies derived from the standard policy. 
Full details on experimental setup are provided in Appendix~\ref{app:robustness}.

\para{Robustness to Dynamics Perturbations}
To test robustness, we evaluate each learned policy under perturbed arrival and service intensities $\lambda_i'=\alpha_\lambda \lambda_i$ and $\mu_i'=\alpha_\mu \mu_i$, where $(\alpha_\lambda,\alpha_\mu)$ ranges over a grid containing the baseline model $(1,1)$. 
We report the gain of entropy-regularized policy over the standard deterministic policy.
Figure~\ref{fig:queue_heatmaps} (left panel) shows that the selected policy with temperature $\tau=0.01$ improves over $\pi_{\rm std}$ on a large part of the perturbation grid. 
The gains are especially visible away from the baseline model, indicating that the entropy-regularized policy is less sensitive to misspecification of the continuous-time jump intensities.

\begin{figure}[!ht]
    \centering
    \includegraphics[width=0.35\linewidth]{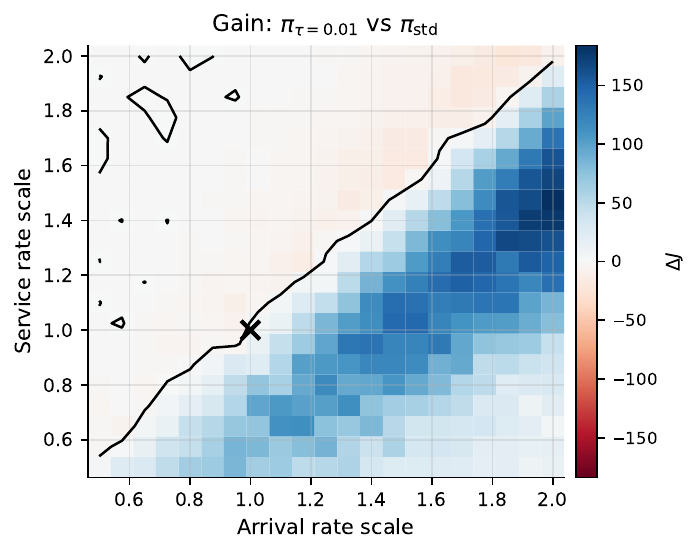}
    \includegraphics[width=0.35\linewidth]{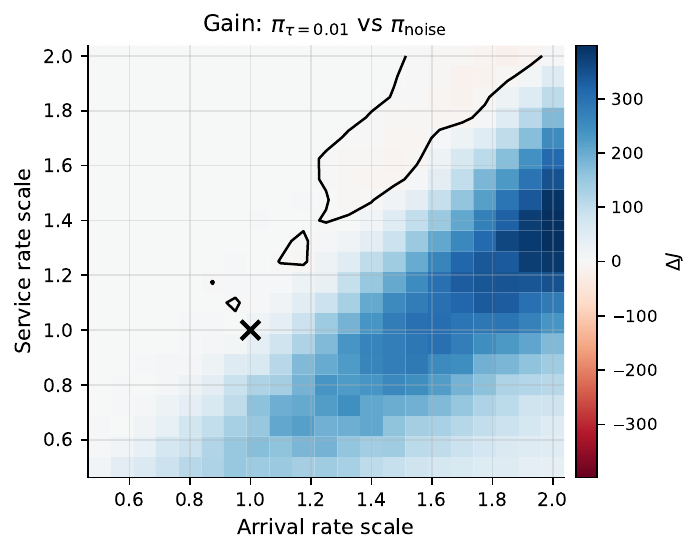}
    \caption{Robustness to arrival and service intensity perturbations in the criss-cross queueing network. Left: gain of the entropy-regularized policy over the standard deterministic policy. Right: gain over a noisy standard policy ($\epsilon$-greedy policy with $\epsilon = 0.1$). Black lines denote sign changes in $\Delta J$.}
    \label{fig:queue_heatmaps}
    \vspace{-1mm}
\end{figure}

A natural question is whether the improved robustness is simply due to added randomness. To test this, we compare against an uninformed noisy baseline $\pi_{\rm noise}  = 0.9\pi_{\rm std} + 0.1\,{\rm Unif}(\mathcal A)$  that injects   action noise. The right panel of Figure~\ref{fig:queue_heatmaps} shows that the entropy-regularized policy 
$\pi_\tau$ outperforms across a broad region of the perturbation grid. This indicates that the gains arise from strategic  value-aware randomization induced by entropy regularization, rather than from mere noise injection.

\begin{figure}[!ht]
    \centering
    \includegraphics[height=0.25\linewidth]{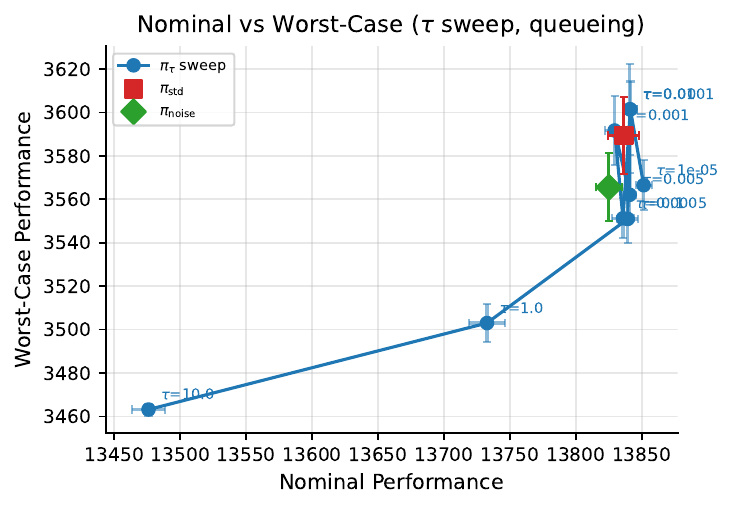}
    \includegraphics[height=0.25\linewidth]{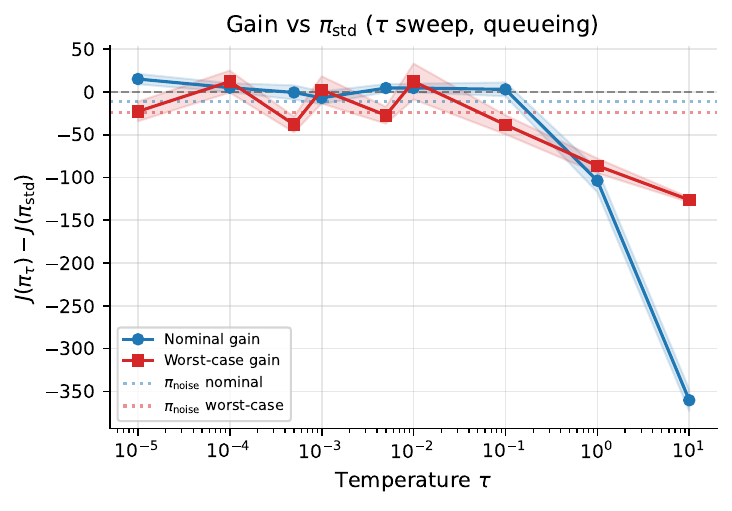}
    \caption{Effect of the regularization temperature in the criss-cross queueing network. Left: nominal and worst-case performance over the perturbation grid. Right: corresponding gains relative to $\pi_{\rm std}$.}
    \label{fig:queue_tau_sweep}
    \vspace{-2mm}
\end{figure}

Figure~\ref{fig:queue_tau_sweep} depicts a sweep over the regularization temperature. 
The left panel shows the nominal and worst-case performance of the learned
policies over the perturbation grid; small positive values of $\tau$ improve worst-case performance relative to $\pi_{\rm std}$, while large values of $\tau$ reduce both nominal and worst-case performance. 
This agrees with the theory: moderate entropy-regularization enlarges
the range of perturbations against which the policy is protected, whereas
excessive regularization makes the policy too diffuse to control congestion
effectively.
The right panel of Figure~\ref{fig:queue_tau_sweep} shows the same experiments in terms of gains relative to the standard policy.   
We observe that the  worst-case gain relative to $\pi_{\rm std}$ can exceed the corresponding nominal gain. This indicates that, under adverse intensity perturbations, $\pi_{\rm std}$ deteriorates more severely than the entropy-regularized policy.

{Analogous results for market making experiments are reported in Appendix~\ref{app:robustness}}, where formal statistical tests confirm significant worst-case improvements for both domains.

% ======================================================================
\section{Conclusion}\label{sec:conclusion}
% ======================================================================

We rigorously showed that entropy-regularized control provides robustness guarantees for continuous-time MDPs. 
For reward perturbations, the entropy-regularized objective admits an exact robust-control interpretation. 
For joint perturbations of rewards and transition intensities, it optimizes certified lower bounds on worst-case performance over policy-dependent uncertainty sets. 
We characterized these sets in two complementary ways: globally through discounted occupancy measures and locally through relative entropy rates of transition intensities. 
Unlike discrete-time formulations, the resulting continuous-time certificates avoid the additional unknown state-distribution entropy term and remain non-degenerate as the action frequency increases.

Our experiments support these findings. Both of the queueing-control and market making experiments show that moderate entropy regularization improves robustness to intensity misspecification over deterministic and noisy baselines. 
Entropy regularization is not necessarily the ideal robust RL method: it still requires choosing a temperature parameter, and the certified sets are policy-dependent. Nonetheless, our results suggest that entropy regularization is a simple and principled mechanism for obtaining robustness guarantees in continuous-time RL, without introducing an explicit adversary or a separate robust optimization procedure.

\para{Limitations and Future Work}
Several limitations remain in our work. 
First, our experiments validate the theory, but broader empirical studies on larger-scale benchmarks could study how the robustness gains scale with state and action complexity.
Second, our characterized robust sets provide worst-case certificates rather than exact descriptions of all perturbations under which entropy regularization improves performance, and may therefore be conservative. Developing tighter certificates is an important direction for future work. 
Finally, our analysis focuses on finite   spaces; extending the framework to continuous state and action spaces would broaden its applicability.

\newpage

\section*{Acknowledgments}
JC was supported by the EPSRC Centre for Doctoral Training in Mathematical Modelling, Analysis and Computation (MAC-MIGS) funded by the UK Engineering and Physical Sciences Research Council (grant EP/S023291/1).
YZ was funded in part by JPMorgan Chase \& Co.
Any views or opinions expressed herein are
solely those of the authors listed, and may differ from the views and opinions expressed by JPMorgan Chase
\& Co.~or its affiliates.
This material is not a product of the Research Department of J.P.~Morgan Securities
LLC.
This material does not constitute a solicitation or offer in any jurisdiction.

\section*{Disclaimer}

This paper was prepared for informational purposes in part by the Quantitative Trading \& Research Group of JPMorganChase \& Co. This paper is not a product of the Research Department of JPMorganChase \& Co. or its affiliates. Neither JPMorganChase \& Co. nor any of its affiliates makes any explicit or implied representation or warranty and none of them accept any liability in connection with this paper, including, without limitation, with respect to the completeness, accuracy, or reliability of the information contained herein and the potential legal, compliance, tax, or accounting effects thereof. This document is not intended as investment research or investment advice, or as a recommendation, offer, or solicitation for the purchase or sale of any security, financial instrument, financial product or service, or to be used in any way for evaluating the merits of participating in any transaction, and shall not constitute a solicitation under any jurisdiction or to any person, if such solicitation under such jurisdiction or to such person would be unlawful.

\bibliographystyle{plain}
\bibliography{Bibliography}

%%%%%%%%%%%%%%%%%%%%%%%%%%%%%%%%%%%%%%%%%%%%%%%%%%%%%%%%%%%%
\newpage
% ======================================================================
% APPENDIX
% ======================================================================
\appendix

\section{Extended Related Work}\label{app:extended_related}

\para{Continuous-time RL with entropy regularization}
Entropy regularization is widely used in continuous-time RL to encourage exploration, smooth  policy optimization, and mitigate  instabilities from time discretization. It underpins algorithm designs for controlled diffusions, including policy evaluation and TD 
 \cite{jia2022policy_eval}, policy‑gradient and actor–critic methods \cite{jia2022policy_grad,giegrich2024convergence}, Q‑learning \cite{jia2023q}, and TRPO/PPO \cite{zhao2023policy}. The framework has recently been extended beyond pure diffusions to jump‑diffusions\cite{gao2024reinforcement,bo2024continuous}, point processes\cite{meng2024reinforcement}, and regime‑switching systems\cite{huang2025switching,zhang2026regime_tsallis},
 and to risk-aware objective \cite{jia2026risk}. Parallel efforts integrate entropy regularization into mean‑field control and games \cite{guo2022entropy,frikha2025actor,wei2025continuous,ren2026common_noise_part1,ren2026common_noise_part2, dianetti2025entropy}.

Our work focuses on continuous-time pure jump processes and develops   efficient event-driven algorithms tailored to this setting. Indeed, when the state contains a diffusion component, the system evolves continuously, and existing algorithms typically require extremely fine time discretizations to sample actions accurately, resulting in substantial computational costs~\cite{szpruch2024optimal,gao2024reinforcement,jia2025accuracy}. In contrast, our continuous-time MDP framework evolves at random Poisson event times, which   provide an intrinsic event clock and eliminate the need for either a fixed sampling grid or an artificially imposed action frequency. This feature constitutes a key advantage of our approach. As demonstrated empirically in Appendix~\ref{app:discretization_extended}, the event-based formulation removes the sensitive time-step hyperparameter, avoids the fundamental bias--variance trade-off inherent in grid-based methods, and ensures that computational costs scale naturally with the underlying event rate. Similar observations have   been made for specific intensity-control models~\cite{meng2024reinforcement}.
 
 Moreover, despite the rapid development of RL algorithms, there is currently no theoretical work establishing that entropy regularization itself provides robustness guarantees in continuous-time RL. The closest related works are \cite{reisinger2021regularity,jia2026risk}. In particular, \cite{reisinger2021regularity} shows that optimal entropy-regularized policies depend continuously on model perturbations, while \cite{jia2026risk} introduces a risk-sensitive formulation to enforce robustness explicitly.
In contrast, our paper provides the first theoretical robustness guarantees arising directly from entropy regularization in continuous-time RL. We show that robustness emerges naturally by incorporating policy entropy into the objective function, as is commonly done in the RL literature, and we explicitly characterize the corresponding sets of admissible model perturbations.

\section{Proofs}\label{app:proofs}
This appendix contains the full proofs of all theoretical results.

\subsection{Equivalence of the arrival-driven model and CTMDP}
\label{appx: sampling methods difference}

We show that the state process generated by the arrival-driven sampling model has the same law as the aggregated CTMDP under the following structural assumption: for each state $s \in \mathcal S$, there exists a state-dependent decision rate $\lambda_s >0$ such that, for every $a\in\mathcal A_s$ and $s'\neq s$, $\lambda_{ss'}^a = \lambda_s p(s' \mid s,a)$,
where $p(\cdot \mid s,a)\in\mathcal P(\mathcal S)$. 
Accordingly, $\lambda^a_{ss}
    =
    -\sum_{s'\neq s}\lambda^a_{ss'}
    =
    -\lambda_s\bigl(1-p(s\mid s,a)\bigr)$.

Fix $\pi\in\mathcal P(\mathcal A\mid\mathcal S)$. In the arrival-driven sampling model, at a decision epoch $T_n$ with current state $S_{T_n}=s$, an action $A_n\sim\pi(\cdot\mid s)$ is sampled and then held fixed on $[T_n,T_{n+1})$. Since the event rate at state $s$ is $\lambda_s$ and does not depend on the chosen action, the holding time $\zeta_n := T_{n+1}-T_n$
satisfies $\zeta_n \mid S_{T_n}=s \sim \mathrm{Exp}(\lambda_s)$.
Moreover, conditional on $(S_{T_n},A_n)=(s,a)$, the post-event state satisfies
\begin{equation*}
    \mathbb P(S_{T_{n+1}}=s' \mid S_{T_n}=s, A_n=a)
    =
    p(s' \mid s,a),
    \quad s'\in\mathcal S\,.
\end{equation*}
Averaging over $A_n\sim\pi(\cdot\mid s)$ yields
\begin{equation*}
    \mathbb P(S_{T_{n+1}}=s' \mid S_{T_n}=s)
    =
    \sum_{a\in\mathcal A_s}\pi(a | s)\,p(s'\mid s,a)
    =: P^\pi(s,s')\,.
\end{equation*}

Therefore, conditional on the current state $s$, the next event time is exponential with rate $\lambda_s$, and the post-event state is drawn from the kernel $P^\pi(s,\cdot)$. Hence the state process $(S_t)_{t\ge0}$ generated by the arrival-driven sampling model is a CTMC with off-diagonal transition rates
\begin{equation*}
    \lambda_s P^\pi(s,s')
    =
    \lambda_s \sum_{a\in\mathcal A_s}\pi(a | s)\,p(s'\mid s,a)
    =
    \sum_{a\in\mathcal A_s}\pi(a | s)\lambda^a_{ss'}
    =
    \lambda^\pi_{ss'},
    \quad s'\neq s\,.
\end{equation*}
Its diagonal entries are therefore also given by $\lambda^\pi_{ss}=-\sum_{s'\neq s}\lambda^\pi_{ss'}$. Thus the state process has generator
$\lambda^\pi = (\lambda^\pi_{ss'})_{s,s'\in\mathcal S}$,
which is exactly the generator of the aggregated CTMDP under policy $\pi$. Since both processes also start from the same initial distribution $\rho$, they induce the same law on the state process. In particular, they yield the same objective function.

\subsection{Proof of Theorem~\ref{thm:exact_reward_only_robustness}} \label{appendix: proof of the reward -only}

\begin{proof}
For brevity, write $R:=R_\theta$, $\widetilde R:=R_{\tilde\theta}$ and
$\bar{\mathrm d}^\pi_\rho(s):=\bar{\mathrm d}_{\rho,\theta}^\pi(s)$.
We assume $\tilde\theta\in\Theta(\theta)$ only perturbs reward and leaves the transition dynamics unchanged, thus $\mathbb P^\pi_{\tilde\theta}
=\mathbb P^\pi_\theta$. 
Then, for every
$\tilde\theta\in\Theta(\theta)$,
\begin{equation*}
\begin{split}
    J(\pi;\tilde\theta)
    &:=
    \mathbb E_\rho^{\mathbb P^\pi_\theta}
    \bigg[
        \int_0^\infty e^{-rt}
        \sum_{a\in\mathcal A_{S_t}}
        \pi(a|S_t)\,\widetilde R(S_t,a)\,
        \mathrm dt
    \bigg]  =
    \frac{1}{r}
    \sum_{s\in\mathcal S}
    \bar{\mathrm d}^\pi_\rho(s)
    \sum_{a\in\mathcal A_s}
    \pi(a|s)\,\widetilde R(s,a).
\end{split}
\end{equation*}
Define the reward-only robust set by
\begin{equation} \label{eq:reward_only_set}
    \widehat{\mathcal R}_{\tau,\varepsilon}^\pi(\theta)
    :=
    \Big\{
        \widehat R:\mathcal S\times\mathcal A\to\mathbb R
        \,\Big|\,
        \tau\log\Big[
        \sum_{s\in\mathcal S}
        \bar{\mathrm d}^\pi_\rho(s)
        \sum_{a\in\mathcal A_s}
        \mu(a|s)
        \exp\Big(
            \frac{R(s,a)-\widehat R(s,a)}{\tau}
        \Big)
        \Big]
        \le \varepsilon
    \Big\},
\end{equation}
where $\varepsilon\in\mathbb R$, and consider the auxiliary optimization problem
\begin{equation}
\label{eq:def_reward_space_value}
    J_{\mathrm{rew}}
    :=
    \inf_{\widehat R\in \widehat{\mathcal R}_{\tau,\varepsilon}^\pi(\theta)}
    \frac{1}{r}
    \sum_{s\in\mathcal S}
    \bar{\mathrm d}^\pi_\rho(s)
    \sum_{a\in\mathcal A_s}
    \pi(a|s)\,\widehat R(s,a).
\end{equation}

The constraint is equivalent to
\begin{equation*}
    \sum_{s\in\mathcal S}
    \bar{\mathrm d}^\pi_\rho(s)
    \sum_{a\in\mathcal A_s}
    \mu(a|s)
    \exp\Big(
        \frac{R(s,a)-\widehat R(s,a)}{\tau}
    \Big)
    \le
    \exp\Big(\frac{\varepsilon}{\tau}\Big).
\end{equation*}
We compute $J_{\mathrm{rew}}$ by Lagrange duality. Let $\lambda\ge 0$ be
the Lagrange multiplier. The Lagrangian is
\small
\begin{equation*}
\begin{split}
    L(\widehat R,\lambda)
    =
    \frac{1}{r}
    \sum_{s\in\mathcal S}
    \bar{\mathrm d}^\pi_\rho(s)
    \sum_{a\in\mathcal A_s}
    \pi(a|s)\,\widehat R(s,a)
    +
    \lambda
    \bigg(
        \sum_{s\in\mathcal S}
        \bar{\mathrm d}^\pi_\rho(s)
        \sum_{a\in\mathcal A_s}
        \mu(a|s)
        \exp\Big(
            \frac{R(s,a)-\widehat R(s,a)}{\tau}
        \Big)
        -
        \exp\Big(\frac{\varepsilon}{\tau}\Big)
    \bigg).
\end{split}
\end{equation*}
\normalsize
For each fixed $\lambda>0$, the map $\widehat R\mapsto L(\widehat R,\lambda)$
is strictly convex. Moreover, Slater's condition holds: taking
$\widehat R=R+c$ with $c>0$ sufficiently large gives
\begin{equation*}
\begin{split}
    \sum_{s\in\mathcal S}
    \bar{\mathrm d}^\pi_\rho(s)
    \sum_{a\in\mathcal A_s}
    \mu(a|s)
    \exp\Big(
        \frac{R(s,a)-\widehat R(s,a)}{\tau}
    \Big)
    =
    e^{-c/\tau}
    <
    \exp\Big(\frac{\varepsilon}{\tau}\Big).
\end{split}
\end{equation*}
Hence strong duality applies. Moreover, any dual maximizer must satisfy
$\lambda^*>0$, since for $\lambda=0$ we have
$\inf_{\widehat R}L(\widehat R,0)=-\infty$.
Fix $\lambda>0$. Differentiating $L$ with respect to $\widehat R(s,a)$ gives
\begin{equation*}
    \frac{\partial L}{\partial \widehat R(s,a)}
    =
    \frac{1}{r}\,
    \bar{\mathrm d}^\pi_\rho(s)\,\pi(a|s)
    -
    \frac{\lambda}{\tau}\,
    \bar{\mathrm d}^\pi_\rho(s)\,\mu(a|s)
    \exp\Big(
        \frac{R(s,a)-\widehat R(s,a)}{\tau}
    \Big).
\end{equation*}
Solving this derivative to zero yields
\begin{equation*}
    \exp\Big(
        \frac{R(s,a)-\widehat R^*(s,a)}{\tau}
    \Big)
    =
    \frac{\tau}{r\lambda}
    \frac{\pi(a|s)}{\mu(a|s)}.
\end{equation*}
Therefore
\begin{equation*}
    \widehat R^*(s,a)
    =
    R(s,a)
    -
    \tau\log\frac{\pi(a|s)}{\mu(a|s)}
    -
    \tau\log\frac{\tau}{r\lambda}.
\end{equation*}

Since $\lambda^*>0$, complementary slackness implies that the constraint is
active at the optimum:
\begin{equation*}
    \sum_{s\in\mathcal S}
    \bar{\mathrm d}^\pi_\rho(s)
    \sum_{a\in\mathcal A_s}
    \mu(a|s)
    \exp\Big(
        \frac{R(s,a)-\widehat R^*(s,a)}{\tau}
    \Big)
    =
    \exp\Big(\frac{\varepsilon}{\tau}\Big).
\end{equation*}
Substituting the expression for $\widehat R^*$ gives
\begin{equation*}
\begin{split}
    \exp\Big(\frac{\varepsilon}{\tau}\Big)
    &=
    \frac{\tau}{r\lambda}
    \sum_{s\in\mathcal S}
    \bar{\mathrm d}^\pi_\rho(s)
    \sum_{a\in\mathcal A_s}
    \pi(a|s)
    =
    \frac{\tau}{r\lambda},
\end{split}
\end{equation*}
where we used $\sum_{a\in\mathcal A_s}\pi(a|s)=1$ and
$\sum_{s\in\mathcal S}\bar{\mathrm d}^\pi_\rho(s)=1$. Hence
\begin{equation*}
    \lambda^*
    =
    \frac{\tau}{r}
    \exp\Big(-\frac{\varepsilon}{\tau}\Big).
\end{equation*}
Consequently,
\begin{equation}
\label{eq:reward_space_optimizer_epsilon}
    \widehat R^*(s,a)
    =
    R(s,a)
    -
    \tau\log\frac{\pi(a|s)}{\mu(a|s)}
    -
    \varepsilon.
\end{equation}

Substituting~\eqref{eq:reward_space_optimizer_epsilon}
into~\eqref{eq:def_reward_space_value} gives
\begin{equation*}
\begin{split}
    J_{\mathrm{rew}}
    &=
    \frac{1}{r}
    \sum_{s\in\mathcal S}
    \bar{\mathrm d}^\pi_\rho(s)
    \bigg(
        \sum_{a\in\mathcal A_s}
        \pi(a|s)\,R(s,a)
        -
        \tau\,\mathrm{KL}(\pi\|\mu)(s)
        -
        \varepsilon
    \bigg)  =
    J_{\tau}(\pi;\theta)
    -
    \frac{\varepsilon}{r}.
\end{split}
\end{equation*}
Now define the parameterized robust set by
\begin{equation*}
    \widehat{\mathcal C}_{\tau,\varepsilon}^\pi(\theta)
    :=
    \Big\{
        \tilde\theta
        \,\Big|\,
        R_{\tilde\theta}\in
        \widehat{\mathcal R}_{\tau,\varepsilon}^\pi(\theta)
    \Big\}.
\end{equation*}
Then $ \Big\{
        R_{\tilde\theta}
        \,\Big|\,
        \tilde\theta\in\widehat{\mathcal C}_{\tau,\varepsilon}^\pi(\theta)
    \Big\}
    \subseteq
    \widehat{\mathcal R}_{\tau,\varepsilon}^\pi(\theta)$. 
Therefore,
\begin{equation*}
\begin{split}
    \inf_{\tilde\theta\in\widehat{\mathcal C}_{\tau,\varepsilon}^\pi(\theta)}
    J(\pi;\tilde\theta)
    &=
    \inf_{\tilde\theta\in\widehat{\mathcal C}_{\tau,\varepsilon}^\pi(\theta)}
    \frac{1}{r}
    \sum_{s\in\mathcal S}
    \bar{\mathrm d}^\pi_\rho(s)
    \sum_{a\in\mathcal A_s}
    \pi(a|s)\,R_{\tilde\theta}(s,a)  \\
    &\ge
    \inf_{\widehat R\in
    \widehat{\mathcal R}_{\tau,\varepsilon}^\pi(\theta)}
    \frac{1}{r}
    \sum_{s\in\mathcal S}
    \bar{\mathrm d}^\pi_\rho(s)
    \sum_{a\in\mathcal A_s}
    \pi(a|s)\,\widehat R(s,a)  =
    J_{\tau}(\pi;\theta)
    -
    \frac{\varepsilon}{r}.
\end{split}
\end{equation*}
This proves the reward-robust lower bound.
Moreover, suppose there exists $\tilde\theta^*\in\Theta(\theta)$
such that
\begin{equation*}
    R_{\tilde\theta^*}(s,a)
    =
    R_\theta(s,a)
    -
    \tau\log\frac{\pi(a|s)}{\mu(a|s)}
    -
    \varepsilon,
    \qquad s\in\mathcal S,\ a\in\mathcal A_s.
\end{equation*}
Then, by~\eqref{eq:reward_space_optimizer_epsilon},
\small
\begin{equation*}
\begin{split}
    \tau\log\Bigg[
    \sum_{s\in\mathcal S}
    \bar{\mathrm d}^\pi_\rho(s)
    \sum_{a\in\mathcal A_s}
    \mu(a|s)
    \exp\Big(
        \frac{R(s,a)-R_{\tilde\theta^*}(s,a)}{\tau}
    \Big)
    \Bigg]
    &=
    \tau\log\Bigg[
    \exp\Big(\frac{\varepsilon}{\tau}\Big)
    \sum_{s\in\mathcal S}
    \bar{\mathrm d}^\pi_\rho(s)
    \sum_{a\in\mathcal A_s}
    \pi(a|s)
    \Bigg] =
    \varepsilon.
\end{split}
\end{equation*}
\normalsize
Hence $\tilde\theta^*\in\widehat{\mathcal C}_{\tau,\varepsilon}^\pi(\theta)$ and
\begin{equation*}
    \inf_{\tilde\theta\in\widehat{\mathcal C}_{\tau,\varepsilon}^\pi(\theta)}
    J(\pi;\tilde\theta)
    =
    J_{\tau}(\pi;\theta)
    -
    \frac{\varepsilon}{r},
\end{equation*}
with $\tilde\theta^*$ a minimizer.
\end{proof}

\subsection{Proof of Theorem~\ref{thm:ct_global_sa_robust}} \label{appx: proof of proposition ct_global_sa}
\begin{proof}
Fix $\pi \in \Pi$. For $\vartheta\in\{\theta,\tilde\theta\}$, recall the discounted state occupancy measure and define the corresponding state--action occupancy measure by
\begin{equation*}
    \mathrm{d}^\pi_{\rho,\vartheta}(s)
    :=
    \mathbb E_{\rho}^{\mathbb P^\pi_\vartheta}
    \bigg[
        \int_0^\infty e^{-rt}\mathbf 1_{\{S_t=s\}}\,\mathrm dt
    \bigg] \, ,
    \qquad
    \mathrm{\nu}^\pi_{\rho,\vartheta}(s,a)
    :=
    \mathbb E_{\rho}^{\mathbb P^\pi_\vartheta}
    \bigg[
        \int_0^\infty e^{-rt}\mathbf 1_{\{S_t=s,A_t=a\}}\,\mathrm dt
    \bigg] \, ,
\end{equation*}
and the normalized occupancies by $\bar{\mathrm d}^\pi_{\rho, \vartheta}(s):=r\,\mathrm{d}^\pi_{\rho, \vartheta}(s)$ and 
$\bar{\mathrm \nu}^\pi_{\rho, \vartheta}(s,a):=r\,\mathrm{\nu}^\pi_{\rho, \vartheta}(s,a)$.
We have
$\bar{\mathrm \nu}^\pi_{\rho, \vartheta}(s,a)=\bar{\mathrm d}^\pi_{\rho, \vartheta}(s)\pi(a\mid s)$, and the objective can be rewritten as
\begin{equation*}
    J^\pi(\vartheta)
    :=
    \mathbb E_{\rho}^{\mathbb P^\pi_\vartheta}
    \bigg[
        \int_0^\infty e^{-rt}R_\vartheta(S_t,A_t)\,\mathrm dt
    \bigg]
    =
    \sum_{s,a}\mathrm{\nu}^\pi_{\rho, \vartheta}(s,a)R_\vartheta(s,a)
    =
    \frac{1}{r}\sum_{s,a}\bar{\mathrm \nu}^\pi_{\rho, \vartheta}(s,a)R_\vartheta(s,a) \, .
\end{equation*}
To evaluate the objective under a perturbed parameter $\tilde \theta \in \Theta$, we restrict the summation to the support of the baseline occupancy measure. 
Let $\mathcal{X}_\theta := \{(s,a) : \bar{\mathrm \nu}^\pi_{\rho, \theta}(s,a) > 0\}$.
Since the occupancies are non-negative and $R_{\tilde\theta}(s,a) > 0$, hence
\begin{equation*}
    J^\pi(\tilde\theta)
    \ge
    \frac{1}{r}
    \sum_{(s,a) \in \mathcal{X}_\theta}
    \bar{\mathrm \nu}^\pi_{\rho, \theta}(s,a)
    \frac{\bar{\mathrm \nu}^\pi_{\rho, \tilde\theta}(s,a)}{\bar{\mathrm \nu}^\pi_{\rho,\theta}(s,a)}
    R_{\tilde\theta}(s,a) \, .
\end{equation*}
Because $\bar{\mathrm \nu}^\pi_{\rho, \theta}(s,a) = 0$ for all $(s,a) \notin \mathcal{X}_\theta$, we have $\sum_{(s,a) \in \mathcal{X}_\theta} \bar{\mathrm \nu}^\pi_{\rho, \theta}(s,a) = 1$. Thus, applying Jensen's inequality to the concave logarithm yields
\begin{equation} \label{eq:jensen_log_reward_bound}
\begin{aligned}
    \log J^\pi(\tilde\theta)
    &\ge
    -\log r
    +
    \sum_{(s,a) \in \mathcal{X}_\theta}\bar{\mathrm \nu}^\pi_{\rho, \theta}(s,a)
    \log\Big(
        \tfrac{\bar{\mathrm \nu}^\pi_{\rho, \tilde\theta}(s,a)}{\bar{\mathrm \nu}^\pi_{\rho, \theta}(s,a)}
        R_{\tilde\theta}(s,a)
    \Big)  \, .
\end{aligned}
\end{equation}
We think of $R(s,a) := \log\big(R_\theta(s,a)\big)$ as the nominal reward.
For $(s,a) \in \mathcal{X}_\theta$, we define the adversarial reward as $\widehat R(s,a) 
:= \log\big( \frac{\bar{\mathrm \nu}^\pi_{\rho,\tilde\theta}(s,a)}{\bar{\mathrm \nu}^\pi_{\rho, \theta}(s,a)}
R_{\tilde\theta}(s,a) \big)$.
If $\bar{\mathrm \nu}^\pi_{\rho,\tilde\theta}(s,a) = 0$, we take $\widehat R(s,a) = -\infty$, in which case the subsequent lower bounds hold trivially.
For $(s,a) \in \mathcal{X}_\theta$, using $\bar{\mathrm \nu}^\pi_{\rho, \theta}(s,a) = \bar{\mathrm d}^\pi_{\rho, \theta}(s)\pi(a\mid s)$ we obtain
\begin{equation*}
R(s,a) - \widehat R(s,a) = \log\big(R_\theta(s,a)\big) - \log\Big(\tfrac{\bar{\mathrm d}^\pi_{\rho,\tilde\theta}(s)}{\bar{\mathrm d}^\pi_{\rho,\theta}(s)}R_{\tilde\theta}(s,a)\Big)  
=  \log\tfrac{\mathrm d^\pi_{\rho,\theta}(s)}{\mathrm d^\pi_{\rho,\tilde\theta}(s)}-\log\tfrac{R_{\tilde\theta}(s,a)}{R_\theta(s,a)} \, .
\end{equation*}
Therefore, by the definition of the robust set $\widetilde{\mathcal C}_{\tau, \varepsilon}^\pi$~\eqref{eq:robust_set_entropy}, $\tilde\theta\in\widetilde{\mathcal C}_{\tau,\varepsilon}^\pi(\theta)$ if and only if the perturbed reward$\widehat R(s,a)$ belongs to the reward-only robust set $\widehat{\mathcal R}_{\tau, \varepsilon}^\pi$~\eqref{eq:reward_only_set} centered at $R=\log R_\theta$.
Adopting the standard convention that $0 \cdot f(x) = 0$ for undefined or infinite $f(x)$, we can evaluate the summation over the entire state--action space $\mathcal{S} \times \mathcal{A}_s$. Applying Theorem~\ref{thm:exact_reward_only_robustness} to $R$ and $\widehat R$ gives:\begin{equation}\label{eq:transformed_reward_robust_bound}
\begin{split}
    &\inf_{\tilde\theta\in
    \widetilde{\mathcal C}_{\tau,\varepsilon}^\pi(\theta)}
    \sum_{s\in\mathcal S}
    \sum_{a\in\mathcal A_s}
    \bar{\mathrm \nu}_{\rho,\theta}^\pi(s,a)
    \widehat R(s,a) \ge
    \sum_{s\in\mathcal S}
    \sum_{a\in\mathcal A_s}
    \bar{\mathrm \nu}_{\rho,\theta}^\pi(s,a)
    \bigg(
        \log R_\theta(s,a)
        -
        \tau\log\frac{\pi(a\mid s)}{\mu(a\mid s)}
    \bigg)
    -
    \varepsilon .
\end{split}
\end{equation}
Combining~\eqref{eq:jensen_log_reward_bound} and
\eqref{eq:transformed_reward_robust_bound} yields
\begin{equation*}
\begin{split}
    \inf_{\tilde\theta\in
    \widetilde{\mathcal C}_{\tau,\varepsilon}^\pi(\theta)}
    \log J^\pi(\tilde\theta)
    &\ge
    -\log r
    +
    \sum_{s\in\mathcal S}
    \sum_{a\in\mathcal A_s}
    \bar{\mathrm \nu}_{\rho,\theta}^\pi(s,a)
    \bigg(
        \log R_\theta(s,a)
        -
        \tau\log\frac{\pi(a\mid s)}{\mu(a\mid s)}
    \bigg)
    -
    \varepsilon  \\
    &=
    -\log r
    +
    \sum_{s\in\mathcal S}
    \bar{\mathrm d}_{\rho,\theta}^\pi(s)
    \bigg(
        \sum_{a\in\mathcal A_s}
        \pi(a\mid s)\log R_\theta(s,a)
        -
        \tau\,\mathrm{KL}(\pi\|\mu)(s)
    \bigg)
    -
    \varepsilon .
\end{split}
\end{equation*}
Since the logarithm is increasing, and using $\bar{\mathrm d}_{\rho,\theta}^\pi(s) = r \mathrm d_{\rho,\theta}^\pi(s)$, we obtain 
\begin{equation*}
\begin{split}
    \inf_{\tilde\theta\in
    \widetilde{\mathcal C}_{\tau,\varepsilon}^\pi(\theta)}
    J^\pi(\tilde\theta)
    \ge
    \exp\bigg(
        r \sum_{s\in\mathcal S}
        \mathrm d_{\rho,\theta}^\pi(s)
        \bigg[
            \sum_{a\in\mathcal A_s}
            \pi(a\mid s)\log R_\theta(s,a)
            -
            \tau\,\mathrm{KL}(\pi\|\mu)(s)
        \bigg]
        - \log r - 
        \varepsilon
    \bigg),
\end{split}
\end{equation*}
which completes the proof.
\end{proof}

\subsection{Robustness Guarantees under Uniform Distributions}
\label{appendix:uniform}
\begin{proposition}
    
\label{corr: cover state-entropy case}
Fix $\pi\in\Pi$, $\tau>0$ and $\varepsilon\in\mathbb R$. Assume that $\mathcal A_s=\mathcal A$ for all $s$ and $\mu(\cdot\mid s)$ is uniform on $\mathcal A$, then
\small
\begin{equation*}
\begin{aligned}
    &\inf_{\tilde\theta\in\widetilde {\mathcal C}^{\pi, sa}_{\tau,\varepsilon}(\theta)}
    \mathbb E_{\rho}^{\mathbb P^\pi_{\tilde\theta}}
    \bigg[
        \int_0^\infty e^{-rt}R_{\tilde\theta}(S_t,A_t)\,\mathrm dt
    \bigg] \\
    &\ge
    \exp\bigg(
        r \, \mathbb E_\rho^{\mathbb P^\pi_\theta}
        \bigg[
            \int_0^\infty e^{-rt}
            \Big(
                \log R_\theta(S_t,A_t)
                - 
       \tau\,\mathrm{KL}(\bar{\mathrm d}^\pi_{\rho, \theta}\|\vartheta_{\mathcal S}) 
             -
               \tau\,\mathrm{KL}(\pi\|\mu)(S_t)
            \Big)\,\mathrm dt
        \bigg]
        -
        \varepsilon - \log r
    \bigg)\,,
\end{aligned}
\end{equation*}
\normalsize
where
$\vartheta_{\mathcal S}$
is the uniform distribution on $\mathcal S$, and 
 the robust set
 $ \widetilde {\mathcal C}^{\pi,sa}_{\tau,\varepsilon}(\theta)$ is defined as
\small
\begin{equation}
\label{eq:uniform_sa_set}
\begin{aligned}
    \widetilde{\mathcal C}^{\pi,\mathrm{sa}}_{\tau,\varepsilon}(\theta)
    :=
    \Bigg\{
        \tilde\theta\in\Theta
        \,\Bigg|\,
        \tau
        \log\bigg[
            \frac{1}{|\mathcal S| |\mathcal A|}
            \sum_{(s,a)\in\mathcal X}
            \exp\bigg(
                \frac{
                    \log
                    \tfrac{
                        \bar{\mathrm d}_{\rho,\theta}^\pi(s)
                    }{
                        \bar{\mathrm d}_{\rho,\tilde\theta}^\pi(s)
                    }
                    -
                    \log
                    \frac{
                        R_{\tilde\theta}(s,a)
                    }{
                        R_\theta(s,a)
                    }
                }{\tau}
            \bigg)
        \bigg]
        \le \varepsilon
    \Bigg\}.
\end{aligned}
\end{equation}
\normalsize
\end{proposition}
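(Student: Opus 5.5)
The approach is to reuse the proof of Theorem~\ref{thm:ct_global_sa_robust} up to the Jensen step \eqref{eq:jensen_log_reward_bound}, and then replace the occupancy-weighted reward-only duality with a duality against the uniform measure on $\mathcal S\times\mathcal A$. Fix $\tilde\theta$ and write $\bar\nu:=\bar{\mathrm \nu}^\pi_{\rho,\theta}$. Since $R_{\tilde\theta}>0$, restricting to $\mathcal X=\mathcal X_\theta$ and applying Jensen's inequality to the logarithm gives
\[
\log J^\pi(\tilde\theta)\ge -\log r+\sum_{(s,a)\in\mathcal X}\bar\nu(s,a)\,\widehat R(s,a),
\]
where $\widehat R(s,a)=\log\big(\tfrac{\bar{\mathrm d}^\pi_{\rho,\tilde\theta}(s)}{\bar{\mathrm d}^\pi_{\rho,\theta}(s)}R_{\tilde\theta}(s,a)\big)$ as before. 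Setting $g(s,a):=\log R_\theta(s,a)-\widehat R(s,a)$, which is exactly the exponent appearing in \eqref{eq:uniform_sa_set}, the task reduces to lower bounding $\sum_{\mathcal X}\bar\nu\,(\log R_\theta-g)$.

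The key step is the Donsker--Varadhan (Gibbs) variational inequality with reference measure $U$, the uniform distribution on $\mathcal S\times\mathcal A$:
\[
\sum_{\mathcal X}\bar\nu\, g\le \tau\,\mathrm{KL}(\bar\nu\|U)+\tau\log\sum_{\mathcal X}U\,e^{g/\tau}.
\]
Only the $\mathcal X$-part of $U$ enters, since $\bar\nu$ is supported on $\mathcal X$ and dropping the remaining mass only decreases the right-hand side; this is consistent with the sum over $\mathcal X$ in \eqref{eq:uniform_sa_set}. If $\bar\nu$ charges a point where $\bar{\mathrm \nu}^\pi_{\rho,\tilde\theta}=0$, then $g=+\infty$ there and $\tilde\theta$ lies outside the set, so that case needs no separate treatment. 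For $\tilde\theta\in\widetilde{\mathcal C}^{\pi,sa}_{\tau,\varepsilon}(\theta)$ the last term is at most $\varepsilon$. Equivalently, one could invoke Theorem~\ref{thm:exact_reward_only_robustness} with the uniform weight on states in place of $\bar{\mathrm d}^\pi_{\rho,\theta}$, but the direct variational inequality is cleaner.

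It remains to decompose the KL term. Using $\bar\nu(s,a)=\bar{\mathrm d}^\pi_{\rho,\theta}(s)\pi(a|s)$ and $U(s,a)=\vartheta_{\mathcal S}(s)\mu(a|s)$ with $\mu$ uniform, the chain rule gives
\[
\mathrm{KL}(\bar\nu\|U)=\mathrm{KL}(\bar{\mathrm d}^\pi_{\rho,\theta}\|\vartheta_{\mathcal S})+\sum_s\bar{\mathrm d}^\pi_{\rho,\theta}(s)\,\mathrm{KL}(\pi\|\mu)(s).
\]
Combining the three displays and using $\sum_s\bar{\mathrm d}^\pi_{\rho,\theta}(s)=1$, the constant state-KL can be written inside $r\,\mathbb E[\int_0^\infty e^{-rt}\cdot\,\mathrm dt]$, since $r\int_0^\infty e^{-rt}\,\mathrm dt=1$. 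Rewriting $\sum_s\bar{\mathrm d}\,(\cdot)$ as $r\,\mathbb E^{\mathbb P^\pi_\theta}_\rho[\int_0^\infty e^{-rt}(\cdot)(S_t)\,\mathrm dt]$, with $\log R_\theta(S_t,A_t)$ understood as averaged over $\pi$, and exponentiating yields the claim.

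The main subtlety is the support bookkeeping: $\bar\nu$ lives on $\mathcal X$ while $U$ has full mass. I expect the argument to handle this by noting that $\sum_{\mathcal X}U e^{g/\tau}\le\sum_{\mathcal S\times\mathcal A}U e^{g/\tau}$ is not needed, because the variational inequality holds with the sub-probability $U|_{\mathcal X}$ directly, since $\mathrm{KL}$ against a sub-probability still satisfies the Gibbs bound. Apart from this, everything else is routine.
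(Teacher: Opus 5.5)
Your proof is correct and follows essentially the paper's argument. Both proofs use the Jensen step from the proof of Theorem~\ref{thm:ct_global_sa_robust}, a Gibbs/Donsker--Varadhan duality against the uniform measure on $\mathcal S\times\mathcal A$, and the chain-rule split of $\mathrm{KL}(\bar\nu\|U)$ into $\mathrm{KL}(\bar{\mathrm d}^\pi_{\rho,\theta}\|\vartheta_{\mathcal S})$ plus the occupancy-averaged policy KL. The paper gets the middle step by re-applying Theorem~\ref{thm:ct_global_sa_robust} after shifting the exponent by $\tau\log(u/\bar\nu_\mu)$ to change the reference measure. You instead use the variational inequality directly, which handles the support bookkeeping (your sub-probability $U|_{\mathcal X}$) more cleanly than the paper's ``compatible support'' remark.

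The paper takes $\mathcal X=\mathcal S\times\mathcal A$ rather than the support $\mathcal X_\theta$. That choice only adds nonnegative terms to the constraint and hence shrinks the set, so your bound covers that reading too.
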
 
 
\begin{proof}
Since $\mathcal A_s=\mathcal A$ for all $s$, write
$\mathcal X:=\mathcal S\times\mathcal A$. Recall that the normalized discounted state--action occupancy measure is given by $\bar \nu_{\rho,\theta}^\pi(s,a) = \bar{\mathrm d}_{\rho,\theta}^\pi(s)\pi(a | s)$. For later use, we denote this measure by $\bar \nu^\pi$ for simplicity.
We define
    $\bar \nu_\mu(s,a)
    :=
    \bar{\mathrm d}_{\rho,\theta}^\pi(s)\mu(a | s)$,
and let
\begin{equation*}
    u(s,a)
    :=
    \frac{1}{|\mathcal S||\mathcal A|}
    =
    \vartheta_{\mathcal S}(s)\mu(a | s),
    \qquad (s,a)\in\mathcal X,
\end{equation*}
where $\mu(\cdot\mid s)$ is uniform on $\mathcal A$ and $\vartheta_{\mathcal S}$
is uniform  on $\mathcal S$.
For each $\tilde\theta$, define
\begin{equation*}
    X_{\tilde\theta}(s,a)
    :=
    \log
    \frac{
        \bar{\mathrm d}_{\rho,\theta}^\pi(s)
    }{
        \bar{\mathrm d}_{\rho,\tilde\theta}^\pi(s)
    }
    -
    \log
    \frac{
        R_{\tilde\theta}(s,a)
    }{
        R_\theta(s,a)
    }.
\end{equation*}
The robust set in Theorem~\ref{thm:ct_global_sa_robust} uses the reference
measure $\bar \nu_\mu$. Changing the reference measure to any probability measure
$\bar \nu_r$ with compatible support amounts to
\begin{equation*}
\begin{split}
    \tau\log
    \bigg[
        \sum_{s,a}
        \bar \nu_r(s,a)
        \exp\bigg(
            \frac{X_{\tilde\theta}(s,a)}{\tau}
        \bigg)
    \bigg]
    &=
    \tau\log
    \bigg[
        \sum_{s,a}
        \bar \nu_\mu(s,a)
        \exp\bigg(
            \frac{
                X_{\tilde\theta}(s,a)
                +
                \tau\log\frac{\bar \nu_r(s,a)}{\bar \nu_\mu(s,a)}
            }{\tau}
        \bigg)
    \bigg].
\end{split}
\end{equation*}
Thus Theorem~\ref{thm:ct_global_sa_robust} applies with the reference
measure $\bar \nu_r$ by replacing the entropy penalty
$\mathrm{KL}(\bar \nu^\pi\| \bar \nu_\mu)$ by $\mathrm{KL}(\bar \nu^\pi\|\bar \nu_r)$. Taking
$\bar \nu_r=u$ gives the robust set
\begin{equation*}
    \widetilde{\mathcal C}^{\pi,\mathrm{sa}}_{\tau,\varepsilon}(\theta)
    =
    \Bigg\{
        \tilde\theta\in\Theta
        \,\Bigg|\,
        \tau
        \log\bigg[
            \frac{1}{|\mathcal S||\mathcal A|}
            \sum_{s\in\mathcal S}
            \sum_{a\in\mathcal A}
            \exp\bigg(
                \tfrac{
                    X_{\tilde\theta}(s,a)
                }{\tau}
            \bigg)
        \bigg]
        \le \varepsilon
    \Bigg\}.
\end{equation*}
Therefore, by Theorem~\ref{thm:ct_global_sa_robust} with the uniform
state--action reference measure $u$,
\begin{equation} \label{eq: lower bound with state entropy}
\begin{split}
    &\inf_{\tilde\theta\in
    \widetilde{\mathcal C}^{\pi,\mathrm{sa}}_{\tau,\varepsilon}(\theta)}
    \mathbb E_{\rho}^{\mathbb P^\pi_{\tilde\theta}}
    \bigg[
        \int_0^\infty e^{-rt} \sum_a \pi(a | S_t)
        R_{\tilde\theta}(S_t, a)\,\mathrm dt
    \bigg]
    \\
    &\quad\ge
    \exp\bigg(
        r\,
        \mathbb E_\rho^{\mathbb P^\pi_\theta}
        \bigg[
            \int_0^\infty e^{-rt} \sum_a \pi(a | S_t)
            \log R_\theta(S_t,a)\,\mathrm dt
        \bigg]
        -
        \tau\,\mathrm{KL}(\nu^\pi\|u)
        -
        \varepsilon
        -
        \log r
    \bigg).
\end{split}
\end{equation}
Now we decompose $\mathrm{KL}(\bar \nu^\pi\|u)$. Since
$\bar \nu^\pi(s,a)=\bar{\mathrm d}_{\rho,\theta}^\pi(s)\pi(a | s)$ and
$u(s,a)=\vartheta_{\mathcal S}(s)\mu(a | s)$, the chain rule for relative
entropy gives
\begin{equation} \label{eq: decompose KL}
\begin{split}
    \mathrm{KL}(\bar \nu^\pi\|u)
    &=
    \mathrm{KL}
    \big(
        \bar{\mathrm d}_{\rho,\theta}^\pi
        \,\big\|\,
        \vartheta_{\mathcal S}
    \big)
    +
    \sum_{s\in\mathcal S}
    \bar{\mathrm d}_{\rho,\theta}^\pi(s)
    \mathrm{KL}(\pi\|\mu)(s)  \\
    &=
    \mathrm{KL}
    \big(
        \bar{\mathrm d}_{\rho,\theta}^\pi
        \,\big\|\,
        \vartheta_{\mathcal S}
    \big)
    +
    r\,
    \mathbb E_\rho^{\mathbb P^\pi_\theta}
    \bigg[
        \int_0^\infty e^{-rt}
        \mathrm{KL}(\pi\|\mu)(S_t)\,\mathrm dt
    \bigg].
\end{split}
\end{equation}
Substituting~\eqref{eq: decompose KL} into~\eqref{eq: lower bound with state entropy} proves the claim.
\end{proof}

\subsection{Proof of Proposition~\ref{prop:monotonicity_tau_global}} \label{appx: proof of prop:monotonicity_tau_global}

\begin{proof}
Fix $\pi \in \Pi$ and $\tilde \theta \in \Theta$.
Recall that $\bar{\nu}_{\mu}(s,a):=
\bar{\mathrm d}_{\rho,\theta}^{\pi}(s)\mu(a | s)$ for any 
$s\in\mathcal S,\ a\in\mathcal A_s$.
For brevity we write
$f(s,a):=f_{\theta,\tilde\theta}(s,a)$ and $\Phi_\tau(\tilde\theta)
    :=
    \tau\log
    \sum_{s\in\mathcal S}
    \sum_{a\in\mathcal A_s}
    \bar{\nu}_{\mu}(s,a)
    \exp\left(\frac{f(s,a)}{\tau}\right)$.
Then, by definition~\eqref{eq:robust_set_entropy}, 
$\widetilde{\mathcal C}_{\tau,\varepsilon}^{\pi}(\theta)
    =
    \{\tilde\theta\in\Theta:\Phi_\tau(\tilde\theta)\le \varepsilon\}$.

We first prove monotonicity. 
Let $Z_\tau
    :=
    \sum_{s\in\mathcal S}
    \sum_{a\in\mathcal A_s}
    \bar{\nu}_{\mu}(s,a)e^{f(s,a)/\tau}$ and define the normalized exponential weights  $  q_\tau(s,a)
    :=
    \tfrac{
        \bar{\nu}_{\mu}(s,a)e^{f(s,a)/\tau}
    }{
        Z_\tau
    } $.
Differentiating $\Phi_\tau(\tilde\theta)=\tau\log Z_\tau$ gives
\begin{equation*}
\begin{split}
    \frac{\mathrm d}{\mathrm d\tau}\Phi_\tau(\tilde\theta)
    &=
    \log Z_\tau
    -
    \frac{1}{\tau}
    \sum_{s\in\mathcal S}
    \sum_{a\in\mathcal A_s}
    q_\tau(s,a)f(s,a).
\end{split}
\end{equation*}
Moreover,
\begin{equation*}
\begin{split}
    \mathrm{KL}(q_\tau\|\bar{\nu}_{\mu})
    &=
    \sum_{s\in\mathcal S}
    \sum_{a\in\mathcal A_s}
    q_\tau(s,a)
    \log
    \frac{q_\tau(s,a)}{\bar{\nu}_{\mu}(s,a)}
    =
    \frac{1}{\tau}
    \sum_{s\in\mathcal S}
    \sum_{a\in\mathcal A_s}
    q_\tau(s,a)f(s,a)
    -
    \log Z_\tau.
\end{split}
\end{equation*}
Hence $\frac{\mathrm d}{\mathrm d\tau}\Phi_\tau(\tilde\theta)
    =
    -\mathrm{KL}(q_\tau\|\bar{\nu}_{\mu})
    \le 0$, implying that
$\Phi_\tau(\tilde\theta)$ is nonincreasing in $\tau$. Therefore, for
$0<\tau_1\le\tau_2$, we have $\Phi_{\tau_2}(\tilde\theta)
    \le
    \Phi_{\tau_1}(\tilde\theta)$,
which proves
    $\widetilde{\mathcal C}_{\tau_1,\varepsilon}^{\pi}(\theta)
    \subseteq
    \widetilde{\mathcal C}_{\tau_2,\varepsilon}^{\pi}(\theta)$.

We next identify the limiting sets. Since $\mathcal S$ and $\mathcal A_s$ are
finite, the log-sum-exp limit gives
\begin{equation*}
    \lim_{\tau\rightarrow0}
    \Phi_\tau(\tilde\theta)
    =
    \max_{\substack{s\in\mathcal S,\ a\in\mathcal A_s\\
    \bar{\nu}_{\mu}(s,a)>0}}
    f_{\theta,\tilde\theta}(s,a).
\end{equation*}
Since $a\in\mathcal A_s=\operatorname{supp}\mu(\cdot\mid s)$, this is
equivalent to the condition
\begin{equation*}
    f_{\theta,\tilde\theta}(s,a)\le\varepsilon,
    \qquad
    \textnormal{for all $a\in\mathcal A_s$ and
    $\bar{\mathrm d}_{\rho,\theta}^{\pi}$-a.s. $s\in\mathcal S$.}
\end{equation*}
Therefore,
\begin{equation*}
    \bigcap_{\tau>0}
    \widetilde{\mathcal C}_{\tau,\varepsilon}^{\pi}(\theta)
    =
    \Big\{
        \tilde\theta\in\Theta
        \,\Big|\,
        f_{\theta,\tilde\theta}(s,a)\le\varepsilon,\;
        \textnormal{$\forall a\in\mathcal A_s$,
        $\bar{\mathrm d}_{\rho,\theta}^{\pi}$-a.s. $s\in\mathcal S$}
    \Big\}.
\end{equation*}

Similarly, as $\tau\to\infty$,
    $e^{f(s,a)/\tau}
    =
    1+\frac{f(s,a)}{\tau}+o\big(\tfrac1\tau\big)$,
uniformly over $(s,a)$. Hence
\begin{equation*}
\begin{split}
    \sum_{s,a}
    \bar{\nu}_{\mu}(s,a)e^{f(s,a)/\tau}
    &=
    1
    +
    \frac{1}{\tau}
    \sum_{s,a}
    \bar{\nu}_{\mu}(s,a)f(s,a)
    +
    o\big(\tfrac1\tau\big).
\end{split}
\end{equation*}
Using $\log(1+x)=x+o(x)$ gives
\begin{equation*}
    \lim_{\tau\to\infty}
    \Phi_\tau(\tilde\theta)
    =
    \sum_{s\in\mathcal S}
    \bar{\mathrm d}_{\rho,\theta}^{\pi}(s)
    \sum_{a\in\mathcal A_s}
    \mu(a | s)f_{\theta,\tilde\theta}(s,a).
\end{equation*}
Thus the limiting robust set as $\tau\to\infty$ is
\begin{equation*}
    \widetilde{\mathcal C}_{\infty,\varepsilon}^{\pi}(\theta)
    =
    \Big\{
        \tilde\theta\in\Theta
        \,\Big|\,
        \sum_{s\in\mathcal S}
        \bar{\mathrm d}_{\rho,\theta}^{\pi}(s)
        \sum_{a\in\mathcal A_s}
        \mu(a | s)f_{\theta,\tilde\theta}(s,a)
        \le\varepsilon
    \Big\}.
\end{equation*}

Since the family
$\{\widetilde{\mathcal C}_{\tau,\varepsilon}^{\pi}(\theta)\}_{\tau>0}$
is nondecreasing in $\tau$, its lower endpoint is the intersection over
$\tau>0$ and its upper endpoint is the limiting union as $\tau\to\infty$.
This completes the proof.

\end{proof}

\subsection{Proof of Theorem~\ref{thm:maxent_robust_ctmdp}} \label{appendix: proof of theorem}
Given a policy $\pi \in \Pi$ and state $s\in\mathcal{S}$, similarly to~\eqref{eq:action-wise local relative entropy}, we define the policy-wise local relative entropy by
\begin{equation} \label{eq: policy-wise local relative entropy}
\begin{split}
\ell(\lambda^\pi_{\theta}, \lambda^{\pi}_{\tilde \theta}) (s)
:=
\sum_{ s'\neq  s}
\Big[
\lambda^\pi_{ s s', \theta}   \log\frac{\lambda^\pi_{ s s', \theta}}{\lambda^\pi_{ s s', \tilde \theta}}
-\lambda^\pi_{ s s', \theta}  + \lambda^\pi_{ s s', \tilde \theta}  
\Big] \, ,
\end{split}
\end{equation}
The following lemma shows that the policy-wise local relative entropy is bounded above by the $\pi$-weighted average of the corresponding action-wise local relative entropy.
\begin{lemma} \label{lem:agg_divergence_bound}
Fix a state $s\in\mathcal S$ and a stationary Markov policy $\pi(\cdot| s)\in\mathcal P(\mathcal A)$.
Assume that for every $s'\neq s$ and every action $a\in\mathcal A$, 
$\lambda^a_{ss', \theta}>0$ implies $\lambda^a_{ss', \tilde \theta}>0$. 
Then
\begin{equation*}
    \ell(\lambda^\pi_{\theta}, \lambda^{\pi}_{\tilde \theta}) (s) \le
\sum_{a\in\mathcal A}\pi(a| s)\,\ell\big(\lambda^a_\theta, \lambda^a_{\tilde \theta} \big) (s)\,.
\end{equation*}
\end{lemma}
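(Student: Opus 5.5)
The plan is to prove the inequality term by term in $s'$, using joint convexity of the function
\[
\phi(x,y):=x\log\frac{x}{y}-x+y,\qquad x\ge 0,\ y>0,
\]
with the conventions $0\log(0/y)=0$ and $\phi(0,0)=0$. We have $\ell(\lambda^a_\theta,\lambda^a_{\tilde\theta})(s)=\sum_{s'\neq s}\phi(\lambda^a_{ss',\theta},\lambda^a_{ss',\tilde\theta})$. Since $\lambda^\pi_{ss',\vartheta}=\sum_a\pi(a|s)\lambda^a_{ss',\vartheta}$ for $\vartheta\in\{\theta,\tilde\theta\}$, it suffices to show, for each fixed $s'\neq s$,
\[
\phi\Big(\sum_a\pi(a|s)x_a,\ \sum_a\pi(a|s)y_a\Big)\le\sum_a\pi(a|s)\,\phi(x_a,y_a),
\]
where $x_a=\lambda^a_{ss',\theta}$ and $y_a=\lambda^a_{ss',\tilde\theta}$. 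Summing this over $s'\neq s$ then gives the lemma.

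For the inequality itself, I would split $\phi$ into a linear part $-x+y$, which passes through the convex combination with equality, and the perspective part $x\log(x/y)$. The inequality for the perspective part is exactly the log-sum inequality
\[
\Big(\sum_a w_a\Big)\log\frac{\sum_a w_a}{\sum_a v_a}\le\sum_a w_a\log\frac{w_a}{v_a},
\]
applied with $w_a=\pi(a|s)x_a$ and $v_a=\pi(a|s)y_a$; the factors $\pi(a|s)$ cancel inside the logarithms on the right. Alternatively, the perspective of the convex function $u\mapsto u\log u$ is jointly convex, and Jensen's inequality with weights $\pi(\cdot|s)$ gives the same conclusion.

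The only real care is the handling of zero entries, and here the support hypothesis is used.
\begin{itemize}
\item Actions with $\pi(a|s)=0$ contribute nothing on either side.
\item If $x_a=0$, the term equals $y_a\ge 0$ by the convention, and this is consistent with lower semicontinuity of $\phi$.
\item If $x_a>0$, the hypothesis gives $y_a>0$, so every term is finite.
\item If $\lambda^\pi_{ss',\theta}>0$, some $a$ with $\pi(a|s)>0$ has $x_a>0$, hence $y_a>0$, so $\lambda^\pi_{ss',\tilde\theta}>0$ and the left side is finite.
\item If $\lambda^\pi_{ss',\theta}=0$, the left side reduces to $\lambda^\pi_{ss',\tilde\theta}$, which equals the right side's $\sum_a\pi(a|s)y_a$ because all contributing $x_a$ are zero.
\end{itemize}
In the log-sum inequality, terms with $w_a=0$ are simply dropped, which only decreases $\sum_a v_a$ on the left and so only strengthens that side's bound. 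I expect this bookkeeping of zero rates to be the only obstacle; the core inequality is standard convexity.
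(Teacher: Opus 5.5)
Your proposal is correct and follows essentially the same route as the paper: joint convexity of $\phi(x,y)=x\log(x/y)-x+y$, Jensen's inequality with weights $\pi(\cdot|s)$ for each fixed $s'\neq s$, then summation over $s'$. The differences are minor. The paper establishes joint convexity by checking that the Hessian is positive semidefinite on $x,y>0$, whereas you use the log-sum inequality (or perspective convexity). Your explicit handling of zero rates, such as $x_a=0<y_a$ (which the one-sided hypothesis allows), is more careful than the paper's Hessian argument, which covers only the open quadrant.
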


\begin{proof}
Define $f(x,y) :=x\log(x/y)-x+y$ with $x , y > 0$ having the same support. 
Notice that the Hessian matrix of $f$ is 
\begin{equation*}
    \boldsymbol H_f = \begin{bmatrix}
       \frac 1 x &  - \frac 1 y \\
        - \frac 1 y & \frac{x} {y^2}
    \end{bmatrix} \, .
\end{equation*}
The Hessian matrix is positive semidefinite on the domain, implying that $(x,y) \mapsto f(x,y)$ is jointly convex.  Hence given $ s'\neq  s$ and $a$, let $x = \lambda_{ s s', \theta}^a$ and $y=\lambda_{ ss', \tilde\theta}^a$. 
Apply Jensen's inequality under $\pi(\cdot| s)$:
\begin{equation*}
    f \Big(\sum_{a}\pi(a | s)\lambda_{ ss', \theta}^a,  \sum_{a}\pi(a| s)\lambda_{ s s', \tilde \theta}^a\Big)
\leq \sum_{a}\pi(a| s)\,
f\big(\lambda_{ ss', \theta}^a,\lambda_{ ss', \tilde \theta}^a\big) \, .
\end{equation*}
Summing over $ s'\neq  s$ yields the claim.
\end{proof}

Another lemma we need is the so-called Donsker–Varadhan variational formula, see~\cite{donsker1976asymptotic}.
We provide a proof for reader's convenience. 
\begin{lemma} \label{lem: Fenchel duality of KL divergence}
    Let $\mathcal{A}$ be a finite action set, $\mu \in \mathcal{P}(\mathcal{A})$ be a fixed reference probability measure and $\tau > 0$. 
    Define 
    \begin{equation*}
        \varphi (f) := \tau \log \Big( \sum_a \mu(a) e^{\tfrac{f(a)}{\tau}}\Big), \quad f: \mathcal A \to \mathbb R \,.
    \end{equation*}
    For any $\pi \in \mathcal{P}(\mathcal{A})$ with $\pi \ll \mu$,
    \begin{equation*}
        - \tau \mathrm{KL} (\pi \| \mu) = \inf_f \Big\{  - \sum_a f(a) \pi (a) + \varphi(f) \Big\} \, . 
    \end{equation*}
\end{lemma}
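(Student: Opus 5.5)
We establish the identity in two directions: a lower bound for every $f$ (Gibbs' inequality) and attainment at one explicit $f$ (the Gibbs tilt of $\mu$ toward $\pi$). Write $\Psi(f):=-\sum_a f(a)\pi(a)+\varphi(f)$.

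\emph{Lower bound.} Fix any $f:\mathcal A\to\mathbb R$ and define the tilted probability measure
\begin{equation*}
q_f(a):=\mu(a)e^{f(a)/\tau}\Big/\sum_{b}\mu(b)e^{f(b)/\tau}.
\end{equation*}
It has the same support as $\mu$, so $\pi\ll q_f$. Taking logarithms,
\begin{equation*}
\log\frac{\pi(a)}{q_f(a)}=\log\frac{\pi(a)}{\mu(a)}-\frac{f(a)}{\tau}+\frac{\varphi(f)}{\tau}.
\end{equation*}
Averaging over $\pi$ (using $\sum_a\pi(a)=1$ and the convention $0\log 0=0$) and multiplying by $\tau$ gives the exact decomposition
\begin{equation*}
\tau\,\mathrm{KL}(\pi\|q_f)=\tau\,\mathrm{KL}(\pi\|\mu)+\Psi(f).
\end{equation*}
Since $\mathrm{KL}(\pi\|q_f)\ge0$ by Gibbs' inequality, we get $\Psi(f)\ge-\tau\,\mathrm{KL}(\pi\|\mu)$ for every $f$. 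Hence the infimum is at least $-\tau\,\mathrm{KL}(\pi\|\mu)$.

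\emph{Attainment.} The decomposition shows equality holds exactly when $q_f=\pi$. The natural choice is $f(a)=\tau\log(\pi(a)/\mu(a))$, which is finite only where $\pi(a)>0$.

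\emph{Main obstacle: actions outside $\operatorname{supp}\pi$.} Since $f$ must be real-valued, we cannot set $f=-\infty$ on $\{\pi=0\}$. Instead take
\begin{equation*}
f_n(a)=\tau\log\frac{\pi(a)}{\mu(a)}\ \text{ if }\pi(a)>0,\qquad f_n(a)=-n\ \text{ otherwise}.
\end{equation*}
Then $\sum_a f_n(a)\pi(a)=\tau\,\mathrm{KL}(\pi\|\mu)$, independent of $n$, because the $-n$ entries carry zero $\pi$-weight. Also
\begin{equation*}
\varphi(f_n)=\tau\log\Big(1+\sum_{a:\pi(a)=0}\mu(a)e^{-n/\tau}\Big)\longrightarrow0 \quad (n\to\infty).
\end{equation*}
Therefore $\Psi(f_n)\to-\tau\,\mathrm{KL}(\pi\|\mu)$, so the infimum equals this value; it is attained when $\operatorname{supp}\pi=\operatorname{supp}\mu$ and is only approached otherwise. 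The one step needing care is this support bookkeeping; the rest is direct algebra.
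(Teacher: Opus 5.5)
Your proof is correct and follows the paper's argument: the same tilted measure ($\mu_f$ in the paper) gives the exact identity $\Psi(f)=-\tau\,\mathrm{KL}(\pi\|\mu)+\tau\,\mathrm{KL}(\pi\|\mu_f)$, and equality comes from the Gibbs choice $f=\tau\log(\pi/\mu)$. Your approximating sequence $f_n$ is more careful than the paper. The paper sets $f^*=\tau\log(\pi/\mu)+c$ on all of $\operatorname{supp}\mu$, which equals $-\infty$ wherever $\pi$ vanishes inside $\operatorname{supp}\mu$ (e.g.\ for deterministic policies), so there the value is only approached as an infimum, exactly as you say. This does not affect the paper's later use of the lemma, which needs only the inequality direction.
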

\begin{proof}
Fix $\tau>0$, $\mu\in\mathcal P(\mathcal A)$, and $\pi\in\mathcal P(\mathcal A)$ with $\pi\ll \mu$.
For any function $f:\mathcal A\to\mathbb R$, define the normalization constant $Z_f:=\sum_{a\in\mathcal A}\mu(a)\exp \big(\tfrac{f(a)}{\tau}\big)$,
and $\mu_f\in\mathcal P(\mathcal A)$ by
\begin{equation} \label{eq:mu_f}
    \mu_f(a):=\tfrac{\mu(a)\exp\big(\tfrac{f(a)}{\tau}\big)}{Z_f}, \quad a \in \mathcal A \,.
\end{equation}
Notice that whenever $\mu(a)=0$, $\mu_f(a)=0$ for all $f$, hence $\mu_f$ and $\mu$ have the same support.

We first rewrite the objective inside the infimum. 
Taking logarithm of~\eqref{eq:mu_f} and rearranging gives
\begin{equation*}
    f(a)=\tau\log\frac{\mu_f(a)}{\mu(a)}+\tau\log Z_f,
\quad a\in\mathrm{supp}(\mu) \, .
\end{equation*}
Multiplying by $\pi$ and summing over $a \in \mathrm{supp}(\mu)$ yields
\begin{align*}
-\sum_a f(a)\pi(a)
&= -\tau \sum_a \pi(a)\log\frac{\mu_f(a)}{\mu(a)} - \tau\log Z_f \, .
\end{align*}
Notice that 
\begin{align*}
-\tau \sum_a \pi(a)\log\frac{\mu_f(a)}{\mu(a)}
&= -\tau \sum_a \pi(a)\log\frac{\pi(a)}{\mu(a)}
   -\tau \sum_a \pi(a)\log\frac{\mu_f(a)}{\pi(a)} \\
&= -\tau\,\mathrm{KL}(\pi\|\mu) + \tau\,\mathrm{KL}(\pi\|\mu_f),
\end{align*}
where we used $\mathrm{KL}(\pi\|\mu_f)=\sum_a \pi(a)\log\frac{\pi(a)}{\mu_f(a)}$ and this is finite since $\mu_f$ and $\mu$ have the same support and  $\pi\ll \mu$.
Therefore, for every $f$,
\begin{equation*}
-\sum_a f(a)\pi(a)+\tau\log\big(\sum_a \mu(a)e^{\tfrac{f(a)}{\tau}}\big)
=
-\tau\,\mathrm{KL}(\pi\|\mu)+\tau\,\mathrm{KL}(\pi\|\mu_f)
\geq 
-\tau\,\mathrm{KL}(\pi\|\mu),
\end{equation*}
since $\mathrm{KL}(\pi\|\mu_f)\ge 0$.
Taking the infimum over $f$ yields
\begin{equation*}
    \inf_f \Big\{  - \sum_a f(a) \pi (a) + \tau \log \big( \sum_a \mu(a) e^{\tfrac{f(a)}{\tau}}\big) \Big\} \geq  - \tau \mathrm{KL} (\pi \| \mu) \, .
\end{equation*}

It remains to show that there exists $f^\ast$ s.t. $\mathrm{KL}(\pi\|\mu_{f^\ast})=0$, hence we attain the equality. 
Choose any constant $c\in\mathbb R$ and define, for $a\in\mathrm{supp}(\mu)$, $f^*(a) :=\tau\log\frac{\pi(a)}{\mu(a)}+c$, and define $f^*(a)$ arbitrarily for $a\notin\mathrm{supp}(\mu)$.
Then
\begin{equation*}
    \mu(a)e^{f^*(a)/\tau}=\mu(a)\frac{\pi(a)}{\mu(a)}e^{c/\tau}=\pi(a)e^{c/\tau}, \quad
Z_{f^\ast}=\sum_a \pi(a)e^{c/\tau}=e^{c/\tau},
\end{equation*}
implies that $\mu_{f^\ast}(a)=\pi(a)$ for all $a$ and hence $\mathrm{KL}(\pi\|\mu_{f^\ast})=0$, which proves the lemma.
\end{proof}

Now we are ready to prove Theorem~\ref{thm:maxent_robust_ctmdp}.
\begin{proof}
To simplify notations, we introduce the following shorthand.  
For any $s \in \mathcal{S}$, denote $\widetilde R^\pi(s) := \sum_{a\in\mathcal A} \pi(a | s) R_{\tilde \theta}(s,a)$ and write $\widetilde R = R_{\tilde \theta}$ and $R = R_{\theta}$.
Also we write $\mathbb P^\pi := \mathbb P^\pi_\theta$ for the law of the process under the baseline parameter $\theta$. 
For the transition intensities, denote $\lambda_{ss'} := \lambda_{ss',\theta}$ and $\widetilde{\lambda}_{ss'} := \lambda_{ss',\tilde\theta}$.
Moreover, we define the likelihood-ratio process $Z_t := \tfrac{\mathrm d \mathbb{P}^\pi_{\tilde \theta}}{\mathrm d \mathbb P^\pi_\theta}\big|_{\mathcal F_t}$ and
$Z_t' := \tfrac{1}{Z_t}$.

We start by taking a log-transform of the robust objective
\begin{equation*}
    \log  \inf_{\tilde\theta\in C_\tau^\pi} \bigg( \mathbb E_\rho^{\mathbb P^\pi_{\tilde \theta}} \bigg[\int_0^\infty e^{-rt} \widetilde R^\pi(S_t) \, \mathrm dt \bigg]\bigg) \, ,
\end{equation*}
since $R_{\tilde\theta}(s,a)>0$ for all $(s,a)$, the objective is strictly positive, hence the log-transform is well-defined. 
Moreover, since $x\mapsto \log x$ is strictly increasing on the domain, it can be moved inside the infimum.

Applying the change of measure and Jensen’s inequality to the concave function $\log$ and using that $r e^{-rt}dt$ is a probability measure on $[0,\infty)$, we have
\begin{equation} \label{eq: robust inter1}
\begin{split}
    \log  \inf_{\tilde\theta\in C_\tau^\pi} & \bigg( \mathbb E_\rho^{\mathbb P^\pi_{\tilde \theta}}  \bigg[\int_0^\infty e^{-rt} \widetilde R^\pi(S_t) \, \mathrm dt \bigg]\bigg)  \geq  \inf_{\tilde\theta\in C_\tau^\pi}   \mathbb E_\rho^{\mathbb P^\pi} \bigg[ \log  \bigg( \int_0^\infty r e^{-rt} Z_t \tfrac{\widetilde R^\pi(S_t)}{r} \, \mathrm dt \bigg) \bigg] \\
    & \geq \inf_{\tilde\theta\in C_\tau^\pi}   \mathbb E_\rho^{\mathbb P^\pi} \bigg[ \int_0^\infty r e^{-rt} \log \Big(  Z_t \tfrac{\widetilde R^\pi(S_t)}{r}  \Big)\, \mathrm dt  \bigg] \\
    & = \inf_{\tilde\theta\in C_\tau^\pi} \bigg\{  - \mathbb E_\rho^{\mathbb P^\pi} \bigg[ \int_0^\infty r e^{-rt} \log  Z_t' \, \mathrm dt  \bigg] +  \mathbb E_\rho^{\mathbb P^\pi} \bigg[ \int_0^\infty r e^{-rt} \log \Big( \tfrac{\widetilde R^\pi(S_t)}{r}  \Big)\, \mathrm dt  \bigg] \bigg\}\, .
\end{split}
\end{equation}
Applying Girsanov theorem~\cite[Thm. 1.35]{oksendal2007applied} for CTMCs, we have 
\begin{equation*}
    \log Z_t' = 
\int_0^t \sum_{s'\neq S_{u-}} \log  \frac{\lambda^\pi_{S_{u-}s'}} {\widetilde\lambda^\pi_{S_{u-}s'}} \, \mathrm dN_u^{S_{u-},s'} - \int_0^t
\sum_{s'\neq S_{u-}} \big( \lambda^\pi_{S_{u-}s'}-\widetilde\lambda^\pi_{S_{u-}s'} \big)\, \mathrm du \, ,
\end{equation*}
where 
\begin{equation*}
   N_t^{s,s'}
:=\sum_{0<u\le t}\mathbf 1_{\{S_{u-}=s,\;S_u=s'\}} \,.
\end{equation*}
Taking expectation under $\mathbb P^\pi$ and using the compensator of the
counting processes yields
\begin{equation*}
\begin{split}
\mathbb E^{\mathbb P^\pi}\big[\log Z_t'\big]
&= \mathbb E^{\mathbb P^\pi}\bigg[
\int_0^t \sum_{s'\neq S_{u-}} \Big( \lambda^\pi_{S_{u-}s'} \log  \frac{\lambda^\pi_{S_{u-}s'}} {\widetilde\lambda^\pi_{S_{u-}s'}} - 
 \big( \lambda^\pi_{S_{u-}s'}-\widetilde\lambda^\pi_{S_{u-}s'} \big) \Big)\, \mathrm du
\bigg] \\
&= \mathbb E^{\mathbb P^\pi}\bigg[\int_0^t \ell(\lambda^\pi, \widetilde\lambda^\pi) (S_u)\,\mathrm du\bigg].
\end{split}
\end{equation*}
since the set of jump times has Lebesgue measure $0$, we have $\int_0^t f(S_{u-}) \mathrm du = \int_0^t f(S_{u}) \mathrm du$ a.s. for any Borel measurable function $f$. 

Therefore, by the expression for $\mathbb E^{\mathbb P^\pi}\big[\log Z_t'\big]$, the first term in the last line of equation~\eqref{eq: robust inter1} is 
\begin{equation*}
    \begin{split}
         \inf_{\tilde{\theta} \in C_\tau^\pi}  
         - \mathbb E_\rho^{\mathbb{P}^\pi} \bigg[\int_0^\infty r e^{-rt} \, \log Z_t' \, \mathrm dt  \bigg]
          = \inf_{\tilde{\theta} \in C_\tau^\pi} -
         \mathbb E_\rho^{\mathbb{P}^\pi} \bigg[  \int_0^\infty r e^{-rt}  
         \Big( \int_0^t \ell(\lambda^\pi, \widetilde\lambda^\pi) (S_u)\,\mathrm du\Big) \,  \mathrm dt  \bigg] \,. 
    \end{split}
\end{equation*}
For any $s \in \mathcal{S}$, define $f(s) := \ell(\lambda^\pi, \widetilde\lambda^\pi) (s)$.
Applying Fubini's theorem path-wise, we have 
\begin{equation*}
    \int_0^\infty r e^{-rt}  \Big( \int_0^t f(S_u) \, \mathrm du \Big) \,  \mathrm dt  = \int_0^\infty f(S_u) \Big( \int_u^\infty r e^{-rt} \mathrm d t \Big) \mathrm du =  \int_0^\infty e^{-ru}f(S_u) \,  \mathrm du \, .
\end{equation*}
Therefore, 
\begin{equation*}
    \inf_{\tilde{\theta} \in C_\tau^\pi}  
    - \mathbb E_\rho^{\mathbb{P}^\pi} \bigg[\int_0^\infty r e^{-rt} \, \log Z_t' \, \mathrm dt  \bigg]
    =
    \inf_{\tilde{\theta} \in C_\tau^\pi}  
    - \mathbb E_\rho^{\mathbb{P}^\pi} \bigg[\int_0^\infty e^{-rt} \, \ell(\lambda^\pi, \widetilde\lambda^\pi)(S_t) \, \mathrm dt  \bigg] \, .
\end{equation*}
Hence the last line of equation~\eqref{eq: robust inter1} becomes 
\begin{equation} \label{eq: robust inter2}
    \inf_{\tilde\theta\in C_\tau^\pi} \bigg\{ - \mathbb E_\rho^{\mathbb{P}^\pi} \bigg[\int_0^\infty e^{-rt} \, \ell(\lambda^\pi, \widetilde\lambda^\pi)(S_t) \, \mathrm dt  \bigg] +  \mathbb E_\rho^{\mathbb P^\pi} \bigg[ \int_0^\infty r e^{-rt} \log \Big( \tfrac{\widetilde R^\pi(S_t)}{r}  \Big)\, \mathrm dt  \bigg] \bigg\} \, .
\end{equation}
Applying Lemma~\ref{lem:agg_divergence_bound}, for any $s \in \mathcal{S}$,
\begin{equation*}
   - \ell(\lambda^\pi,\widetilde\lambda^\pi)(s) \ge
- \sum_{a\in\mathcal A}\pi(a| s)\,\ell(\lambda^{a},\widetilde\lambda^{a})(s)\,. 
\end{equation*}
Moreover, by Jensen's inequality, for any $s\in\mathcal S$, 
\begin{equation*}
    \log \Big( \tfrac{\widetilde R^\pi(s)}{r} \Big) = \log \Big( \sum_{a} \pi(a |  s)\tfrac{\widetilde R^\pi(  s, a)}{r}  \Big) \geq \sum_{a} \pi(a |  s) \log \Big( \tfrac{\widetilde R(s, a) }{r} \Big) \, . 
\end{equation*}
Substituting these into~\eqref{eq: robust inter2} yields 
\begin{equation} \label{eq: robust inter3}
\begin{split}
    \log   \inf_{\tilde\theta\in C_\tau^\pi}  \bigg( \mathbb E_\rho^{\mathbb P^\pi_{\tilde \theta}}  \bigg[\int_0^\infty & e^{-rt} \widetilde R^\pi(S_t) \, \mathrm dt \bigg]\bigg)  \\ \geq
    \inf_{\tilde\theta\in C_\tau^\pi} &\bigg\{ 
    - \mathbb E_\rho^{\mathbb{P}^\pi} \bigg[\int_0^\infty e^{-rt} 
    \sum_{a}\pi(a| S_t)\,\ell(\lambda^{a},\widetilde\lambda^{a})(S_t) \, \mathrm dt  \bigg] \\
    &\quad +  \mathbb E_\rho^{\mathbb P^\pi} \bigg[ \int_0^\infty r e^{-rt} 
    \sum_{a}\pi(a |  S_t)\,
    \log \Big( \tfrac{\widetilde R(S_t, a) }{r} \Big)\, \mathrm dt  \bigg] 
    \bigg\} \, .
\end{split}
\end{equation}
For the second term in the right hand side of equation~\eqref{eq: robust inter3}, by adding and subtracting $\log \Big( \tfrac{R(S_t, a)}{r} \Big)$, we have
\begin{equation*}
\begin{split}
    \inf_{\tilde\theta\in C_\tau^\pi} \mathbb E_\rho^{\mathbb P^\pi} \bigg[ & \int_0^\infty r e^{-rt} 
    \sum_{a}\pi(a |  S_t)\,
    \log \Big( \tfrac{\widetilde R(S_t, a) }{r} \Big)\, \mathrm dt  \bigg]  \\
    & 
    = \inf_{\tilde\theta\in C_\tau^\pi}  \mathbb E_\rho^{\mathbb P^\pi} \bigg[ \int_0^\infty r e^{-rt} \sum_{a}\pi(a |  S_t) \log \Big( \tfrac{\widetilde R(S_t,a)}{R(S_t,a)}  \Big)\, \mathrm dt  \bigg] \\
    & \qquad + \mathbb E_\rho^{\mathbb P^\pi} \bigg[ \int_0^\infty r e^{-rt} \sum_{a}\pi(a |  S_t) \log \Big( \tfrac{R(S_t, a)}{r} \Big)\, \mathrm dt  \bigg] \, .
\end{split}
\end{equation*}
Therefore, 
\begin{equation*}
    \begin{split}
         \log   \inf_{\tilde\theta\in C_\tau^\pi}  \bigg( \mathbb E_\rho^{\mathbb P^\pi_{\tilde \theta}}  \bigg[\int_0^\infty & e^{-rt} \widetilde R^\pi(S_t) \, \mathrm dt \bigg]\bigg)  \\ 
         \ge \inf_{\tilde\theta\in C_\tau^\pi} & 
     \mathbb E_\rho^{\mathbb{P}^\pi} \bigg[\int_0^\infty e^{-rt} 
    \sum_{a}\pi(a| S_t) \Big( - \ell(\lambda^{a},\widetilde\lambda^{a})(S_t) + r \log \tfrac{\widetilde R(S_t,a)}{R(S_t,a)} \Big)\, \mathrm dt  \bigg]  \\
         & \qquad  + \mathbb E_\rho^{\mathbb{P}^\pi} \bigg[\int_0^\infty  e^{-rt} \sum_{a} \pi(a | S_t) \Big( r \log \Big(  \tfrac{R(S_t, a)}{r} \Big)\Big) \mathrm dt  \bigg]\,.
    \end{split}
\end{equation*}
By Lemma~\ref{lem: Fenchel duality of KL divergence}, the following inequality holds, for any $f: \mathcal{A} \to \mathbb R$,  
\begin{equation} \label{eq: coro of duality}
    \sum_a \pi(a) f(a) \leq \tau \mathrm{KL} (\pi \| \mu) + \varphi(f) \, ,
\end{equation}
where $\varphi (f) = \tau \log \Big( \sum_a \mu(a) e^{\tfrac{f(a)}{\tau}}\Big)$.
Applying~\eqref{eq: coro of duality} to $c_{\theta,\tilde\theta}(s,a)$~\eqref{eq:perturbation_cost}
state-wise yields
\begin{equation*}
    - \sum_{a} \pi(a |  s) c_{\theta,\tilde\theta}(s,a) \geq -\tau \, \mathrm{KL} (\pi(\cdot | s) \| \mu (\cdot | s)) - \varphi(c_{\theta,\tilde\theta}(s,\cdot)) \, .
\end{equation*}
Therefore,  
\begin{equation*}
    \begin{split}
         \log   \inf_{\tilde\theta\in C_\tau^\pi}  & \bigg( \mathbb E_\rho^{\mathbb P^\pi_{\tilde \theta}}  \bigg[\int_0^\infty  e^{-rt} \widetilde R^\pi(S_t) \, \mathrm dt \bigg]\bigg)  \\ 
         & \ge \inf_{\tilde\theta\in C_\tau^\pi} 
     \mathbb E_\rho^{\mathbb{P}^\pi} \bigg[\int_0^\infty e^{-rt} 
    \Big( - \tau \, \mathrm{KL} (\pi \| \mu ) (S_t) - \varphi(c_{\theta,\tilde\theta}(S_t, \cdot)) \Big)\, \mathrm dt  \bigg]  \\
         & \qquad  + \mathbb E_\rho^{\mathbb{P}^\pi} \bigg[\int_0^\infty  e^{-rt} \sum_{a} \pi(a | S_t) \Big( r \log \Big( \tfrac{R(S_t, a)}{r} \Big)\Big) \mathrm dt  \bigg] \\
         & = \mathbb E_\rho^{\mathbb{P}^\pi} \bigg[\int_0^\infty  e^{-rt} \Big( \sum_{a} \pi(a | S_t) \big( r \log R(S_t, a) \big) - \tau \, \mathrm{KL} (\pi\| \mu ) (S_t)\Big) \mathrm dt  \bigg]  \\
         & \qquad + \inf_{\tilde\theta\in C_\tau^\pi}  \mathbb E_\rho^{\mathbb{P}^\pi} \bigg[\int_0^\infty - e^{-rt} 
    \,  \varphi(c_{\theta,\tilde\theta}(S_t,\cdot)) \, \mathrm dt  \bigg] - \log r \, .
    \end{split}
\end{equation*}
By the definition of $\mathcal C_\tau^\pi$~\eqref{eq:robust_set}, for any $\tilde{\theta} \in C_\tau^\pi(\theta)$,
\begin{equation*}
\begin{split}
\mathbb E_\rho^{\mathbb{P}^\pi} \bigg[\int_0^\infty &- e^{-rt} 
    \,  \varphi(c_{\theta,\tilde\theta}(S_t, \cdot)) \, \mathrm dt  \bigg]  \\ & = 
    - \frac{\tau} {r} \sum_{s} \bar{\mathrm d}_{\rho, \theta}^\pi (s) \log \Big( \sum_a \mu(a | s) \exp \Big( \tfrac{c_{\theta,\tilde\theta}(s,a)}{\tau} \Big) \Big) \geq - \frac{\varepsilon}{r}   \, .
\end{split}
\end{equation*}
Consequently, 
\begin{equation*}
    \begin{split}
        \log   \inf_{\tilde\theta\in C_\tau^\pi}  & \bigg( \mathbb E_\rho^{\mathbb P^\pi_{\tilde \theta}}  \bigg[\int_0^\infty  e^{-rt} \widetilde R^\pi(S_t) \, \mathrm dt \bigg]\bigg)  \\ 
        & \geq \mathbb E_\rho^{\mathbb{P}^\pi} \bigg[\int_0^\infty  e^{-rt} \Big( \sum_{a} \pi(a | S_t) \big( r \log R(S_t, a) \big) - \tau \, \mathrm{KL} (\pi \| \mu )(S_t)\Big) \mathrm dt  \bigg] - \frac{\varepsilon}{r} - \log r\,,
    \end{split}
\end{equation*}
which completes the proof.
\end{proof}

\subsection{Proof of Proposition~\ref{prop:monotonicity_tau}}
\label{appendix: proof of the monotonicity}

\begin{proof}
Fix $\pi\in\Pi$ and $\tilde\theta\in\Theta$. For each $s\in\mathcal S$, define $g_s(\tau)
    :=
    \tau\log
    \big(
        \sum_{a\in\mathcal A_s}
        \mu(a | s)
        e^{c_{\theta,\tilde\theta}(s,a)/\tau}
    \big)$ with $ \tau>0$.
Then the defining criterion of
$\mathcal C_{\tau,\varepsilon}^{\pi}(\theta)$ is
$\Psi_\tau(\tilde\theta)
    :=
    \sum_{s\in\mathcal S}
    \bar{\mathrm d}_{\rho,\theta}^{\pi}(s)g_s(\tau)
    \le \varepsilon$.

The same log-sum-exp argument used in the proof of
Proposition~\ref{prop:monotonicity_tau_global}, applied statewise with
base measure $\mu(\cdot\mid s)$ and function
$c_{\theta,\tilde\theta}(s,\cdot)$, gives
\begin{equation*}
    \frac{\mathrm d}{\mathrm d\tau}g_s(\tau)
    =
    -\mathrm{KL}(q_\tau(\cdot\mid s)\|\mu(\cdot\mid s))
    \le 0,
\end{equation*}
where
\begin{equation*}
    q_\tau(a | s)
    :=
    \frac{
        \mu(a | s)e^{c_{\theta,\tilde\theta}(s,a)/\tau}
    }{
        \sum_{b\in\mathcal A_s}
        \mu(b\mid s)e^{c_{\theta,\tilde\theta}(s,b)/\tau}
    } .
\end{equation*}
Thus each $g_s(\tau)$ is nonincreasing in $\tau$, and therefore
$\Psi_\tau(\tilde\theta)$ is also nonincreasing. Hence, for
$0<\tau_1\le\tau_2$, we have $\Psi_{\tau_2}(\tilde\theta)
    \le
    \Psi_{\tau_1}(\tilde\theta)$,
which proves
    $\mathcal C_{\tau_1,\varepsilon}^{\pi}(\theta)
    \subseteq
    \mathcal C_{\tau_2,\varepsilon}^{\pi}(\theta)$.

It remains to identify the limiting sets. By the standard log-sum-exp limits as discussed in the proof of Proposition~\ref{prop:monotonicity_tau_global},
for each $s\in\mathcal S$,
\begin{equation*}
    \lim_{\tau\rightarrow0}g_s(\tau)
    =
    \max_{a\in\mathcal A_s}c_{\theta,\tilde\theta}(s,a),
    \qquad
    \lim_{\tau\to\infty}g_s(\tau)
    =
    \sum_{a\in\mathcal A_s}
    \mu(a | s)c_{\theta,\tilde\theta}(s,a).
\end{equation*}
Since $\mathcal S$ is finite, these limits pass through the finite weighted
sum defining $\Psi_\tau$. Therefore,
\begin{equation*}
    \mathcal C_{0,\varepsilon}^{\pi}(\theta)
    =
    \Big\{
        \tilde\theta\in\Theta
        \,\Big|\,
        \sum_{s\in\mathcal S}
        \bar{\mathrm d}_{\rho,\theta}^{\pi}(s)
        \max_{a\in\mathcal A_s}
        c_{\theta,\tilde\theta}(s,a)
        \le \varepsilon
    \Big\},
\end{equation*}
and
\begin{equation*}
    \mathcal C_{\infty,\varepsilon}^{\pi}(\theta)
    =
    \Big\{
        \tilde\theta\in\Theta
        \,\Big|\,
        \sum_{s\in\mathcal S}
        \bar{\mathrm d}_{\rho,\theta}^{\pi}(s)
        \sum_{a\in\mathcal A_s}
        \mu(a | s)c_{\theta,\tilde\theta}(s,a)
        \le \varepsilon
    \Big\}.
\end{equation*}
The monotonicity proved above then implies
\begin{equation*}
    \mathcal C_{0,\varepsilon}^{\pi}(\theta)
    \subseteq
    \mathcal C_{\tau,\varepsilon}^{\pi}(\theta)
    \subseteq
    \mathcal C_{\infty,\varepsilon}^{\pi}(\theta),
    \quad \forall \tau>0 .
\end{equation*}
\end{proof}

% ==================================================================
\section{Detailed Setup for the Market Making Example} \label{app: detailed worked example}
% ==================================================================

% \textbf{Single-asset market making.}
We use a market-making problem as the worked example.
In market making, a dealer continuously posts bid and ask quotes, earns the spread when market orders are filled, and controls inventory risk through the quoted distances. 
Since the inventory changes are only triggered by order arrivals, this is naturally an event-driven continuous-time control problem.

We now consider a simplified single-asset market making model. 
The state $q$ is the dealer's inventory taking values in $\mathcal S=\{-10,-9,\ldots,9,10\}$
and an action $\delta=(\delta^-,\delta^+)\in\mathcal A$ specifies the bid and ask quote distances.
Motivated by~\cite{barzykin2023algorithmic}, where optimal quotes remain in a narrow range near zero inventory, we discretise the quote interval $[0.06,0.24]$ bps into 20 evenly spaced points,
\begin{equation*}
    \Delta_{20}
    =
    \{\bar\delta^{(1)},\ldots,\bar\delta^{(20)}\},
    \qquad
    \bar\delta^{(i)}
    =
    \delta_{\min}+(i-1)\tfrac{\delta_{\max}-\delta_{\min}}{19},
    \quad i=1,\ldots,20 ,
\end{equation*}
with $\delta_{\min}=0.06$ and $\delta_{\max}=0.24$. 
The action space is thus $\mathcal A=\Delta_{20}\times\Delta_{20}$ with $|\mathcal A|=400$, 
and we take the reference policy to be uniform, $\mu(\delta | q)=1/|\mathcal A|$.

For a given state--action pair $(q,\delta)$, inventory can only move to $q+1$ or $q-1$, subject to the boundary constraints. 
These jumps occur when the dealer's bid or ask quote is filled by incoming market orders. The fill intensity is the instantaneous arrival rate of such fills: economically, a quote placed further from the mid-price is less attractive and hence is filled less often. Following~\cite{barzykin2023algorithmic}, we use the logistic specification
\begin{equation*}
    \Lambda_\theta(\delta^\pm)
    =
    \frac{\lambda_R}{1+\exp(\alpha_\Lambda+\beta_\Lambda\delta^\pm)},
    \quad
    \theta=(\alpha_\Lambda,\beta_\Lambda),
\end{equation*}
with fixed scale parameter $\lambda_R$. Since $\beta_\Lambda>0$, the fill intensity is decreasing in the quote distance.
The controlled transition rates are therefore
\begin{equation*}
    \lambda^\delta_{q,q+1,\theta}
    =
    \mathbf 1_{\{q<10\}}\Lambda_\theta(\delta^-),
    \qquad
    \lambda^\delta_{q,q-1,\theta}
    =
    \mathbf 1_{\{q>-10\}}\Lambda_\theta(\delta^+).
\end{equation*}
Here a bid fill increases inventory from $q$ to $q+1$, while an ask fill decreases inventory from $q$ to $q-1$.
The running reward is the expected spread capture minus a quadratic inventory penalty:
\begin{equation*}
    R_\theta(q,\delta)
    =
    \mathbf 1_{\{q<10\}}\Lambda_\theta(\delta^-)\delta^-
    +
    \mathbf 1_{\{q>-10\}}\Lambda_\theta(\delta^+)\delta^+
    -
    \frac{\gamma}{2}\sigma^2 q^2 .
\end{equation*}

We use the calibrated parameters from~\cite{barzykin2023algorithmic}, rescaled to minutes for time-dependent quantities. 
The baseline values are thus $\lambda_R=\frac{1000}{8\cdot 60}\ {\rm min}^{-1}$, $\sigma=\frac{50}{\sqrt{8\cdot 60}}\ {\rm bps}\cdot{\rm min}^{-1/2}$, $\alpha_\Lambda=-1$ and $\beta_\Lambda=10\ {\rm bps}^{-1}$. 
The quote-space parameters $\alpha_\Lambda$ and $\beta_\Lambda$ are left unchanged under this rescaling because $\alpha_\Lambda+\beta_\Lambda\delta$ is dimensionless. We set $\gamma=0.05\ {\rm bps}^{-1}\cdot{\rm M\$}^{-1}$.
Since our robustness certificate contains the term $\log(R_{\tilde\theta}/R_\theta)$, rewards must be strictly positive. We therefore use the shifted reward $R_\theta^+(q,\delta):=R_\theta(q,\delta)+C$, where $C:= - \min_{\tilde \theta} \min_{q,\delta} R_{\tilde \theta} (q,\delta) + 10^{-5}$ so that $R_\theta^+(q,\delta)\ge 10^{-5}$ uniformly over all $(q,\delta)$.

Finally, to compare the two policy-dependent robust sets
$\widetilde{\mathcal C}^{\pi}_{\tau,\varepsilon}(\theta)$
in~\eqref{eq:robust_set_entropy} and
$\mathcal C^{\pi}_{\tau,\varepsilon}(\theta)$
in~\eqref{eq:robust_set}, we fix a common behavior policy $\pi$.
The policy is chosen to mimic the inventory-stabilizing structure of the
optimal market-making policy: when the inventory is positive, the policy
favors actions with relatively tighter ask quotes and wider bid quotes,
thereby encouraging sell fills and pushing inventory back towards zero;
when the inventory is negative, it favors tighter bid quotes and wider ask
quotes, encouraging buy fills. Hence the policy induces mean-reverting
inventory dynamics and reflects the economic trade-off between spread
capture and inventory-risk control.

We fix the robustness budget at $\varepsilon=0.01$. Moreover, we set $r=1$.
With this normalization, the lower-bound constants in the two certificates
coincide; see Theorems~\ref{thm:ct_global_sa_robust}
and~\ref{thm:maxent_robust_ctmdp}. Therefore, the numerical comparison
mainly reflects the geometry and relative size of the robust sets, rather
than differences caused by the scale of the corresponding robust lower
bounds.

Under this setting, the local relative entropy between the nominal and
perturbed jump rates is explicit, 
\begin{equation*}
\begin{split}
    \ell(\lambda^\delta_\theta,\lambda^\delta_{\tilde\theta})(q)
    &=
    \mathbf 1_{\{q<10\}}
    \left[
        \Lambda_\theta(\delta^-)
        \log\frac{\Lambda_\theta(\delta^-)}
        {\Lambda_{\tilde\theta}(\delta^-)}
        -
        \Lambda_\theta(\delta^-)
        +
        \Lambda_{\tilde\theta}(\delta^-)
    \right]
    \\
    &\quad+
    \mathbf 1_{\{q>-10\}}
    \left[
        \Lambda_\theta(\delta^+)
        \log\frac{\Lambda_\theta(\delta^+)}
        {\Lambda_{\tilde\theta}(\delta^+)}
        -
        \Lambda_\theta(\delta^+)
        +
        \Lambda_{\tilde\theta}(\delta^+)
    \right].
\end{split}
\end{equation*}
Thus, for each candidate perturbation $\tilde\theta$, the local intensity-based robust constraint is evaluated as
\begin{equation*}
    \Phi_{\mathrm{loc}}(\tilde\theta;\tau)
    :=
    \sum_{q\in\mathcal S}
    \bar{\mathrm d}_{\rho,\theta}^{\pi}(q)
    \,
    \tau
    \log
    \bigg[
        \frac{1}{|\mathcal A|}
        \sum_{\delta\in\mathcal A}
        \exp
        \Big(
            \tfrac{
                \ell(\lambda^\delta_\theta,\lambda^\delta_{\tilde\theta})(q)
                -
                r\log
                \tfrac{R^+_{\tilde\theta}(q,\delta)}
                     {R^+_{\theta}(q,\delta)}
            }{\tau}
        \Big)
    \bigg],
\end{equation*}
and we include $\tilde\theta$ in
$\mathcal C^{\pi}_{\tau,\varepsilon}(\theta)$ whenever
$\Phi_{\mathrm{loc}}(\tilde\theta;\tau)\le \varepsilon$. 
To compute the normalized state occupancy measure, we use the formula
$\bar{\mathrm d}_{\rho,\tilde\theta}^{\pi}
=
r\rho(rI-Q_{\theta}^{\pi})^{-1}$, which follows from the standard resolvent representation of CTMDPs. Here $Q_{\theta}^{\pi}$ is the generator induced by the fixed policy $\pi$ under the parameter $\tilde\theta$, and $\rho$ is the initial distribution.

For the global occupancy-based robust set, the perturbation is measured
through the change in discounted state occupancy. For each $\tilde\theta$,
we first compute
$\bar{\mathrm d}_{\rho,\tilde\theta}^{\pi}
=
r\rho(rI-Q_{\tilde\theta}^{\pi})^{-1}$,
using the same fixed policy $\pi$ and initial distribution $\rho$. The
global constraint is then evaluated as
\begin{equation*}
    \Phi_{\mathrm{glob}}(\tilde\theta;\tau)
    :=
    \tau
    \log
    \bigg[
        \sum_{q\in\mathcal S}
        \bar{\mathrm d}_{\rho,\theta}^{\pi}(q)
        \frac{1}{|\mathcal A|}
        \sum_{\delta\in\mathcal A}
        \exp
        \Big(
            \tfrac{
                \log
                \tfrac{\bar{\mathrm d}_{\rho,\theta}^{\pi}(q)}
                     {\bar{\mathrm d}_{\rho,\tilde\theta}^{\pi}(q)}
                -
                \log
                \tfrac{R^+_{\tilde\theta}(q,\delta)}
                     {R^+_{\theta}(q,\delta)}
            }{\tau}
        \Big)
    \bigg].
\end{equation*}
We include $\tilde\theta$ in
$\widetilde{\mathcal C}^{\pi}_{\tau,\varepsilon}(\theta)$ whenever
$\Phi_{\mathrm{glob}}(\tilde\theta;\tau)\le \varepsilon$.

\begin{figure}[!ht]
    \centering
    \includegraphics[width=1.0\linewidth]{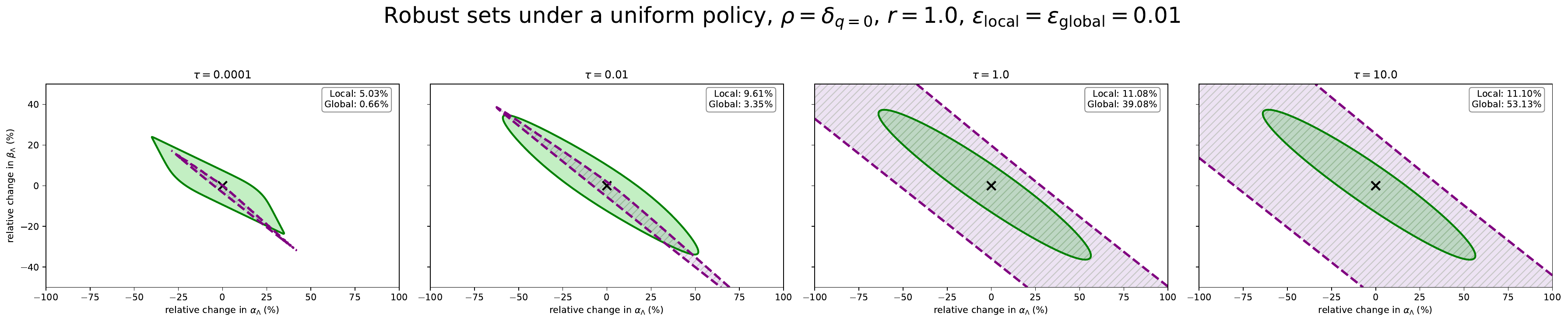}
    \caption{Robust regions for the market making example under the same setting as Figure~\ref{fig:feasible-reference} but with a uniform policy.}
    \label{fig:feasible2}
\end{figure}

Compared with Figure~\ref{fig:feasible-reference}, where the policy has an
inventory-stabilizing structure motivated by the optimal market-making
control, Figure~\ref{fig:feasible2} repeats the experiment under a less informative
benchmark policy, i.e. a uniform policy on $\mathcal A$. 
The local intensity-based certificate is almost unchanged, whereas the global occupancy-based certificate becomes larger. 
This reflects the different nature of the two constraints. 
The local certificate penalizes pointwise jump-rate perturbations averaged under the baseline occupancy, and is little affected here since the initial law is concentrated near zero and the baseline occupancy profile remains similar. 
By contrast, the global certificate depends on the realized occupancy distortion between the baseline and perturbed controlled processes, and is therefore more sensitive to the interaction between the policy and the
perturbed dynamics. 

\begin{figure}[!ht]
    \centering
    \includegraphics[width=1.0\linewidth]{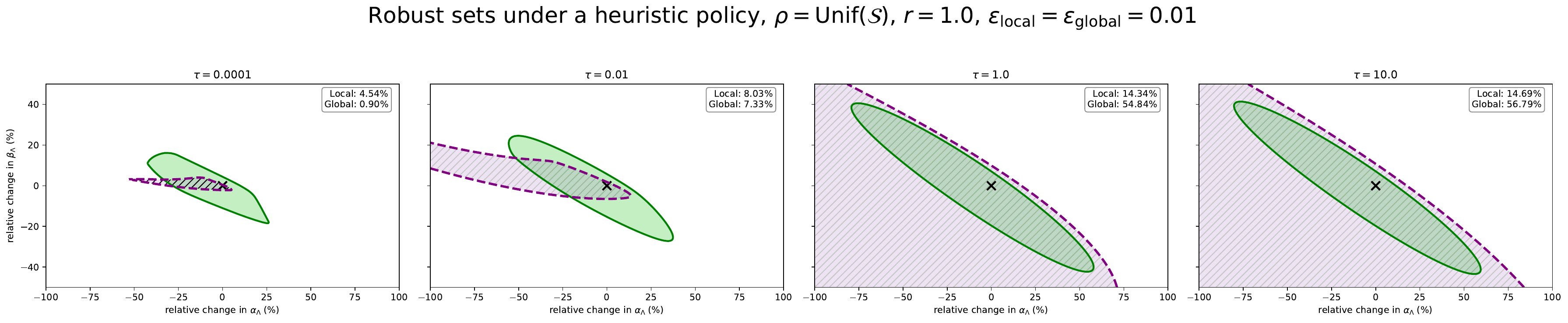}
    \caption{Robust regions for the market making example under the heuristic policy as Figure~\ref{fig:feasible-reference} but with a uniform initial state distribution.}
    \label{fig:feasible3}
\end{figure}

Next, to isolate the effect of the initial distribution, we replace the Dirac distribution at zero ($\rho = \delta_{\{q=0\}}$) with a uniform distribution ($\rho = \mathrm{Unif}(\mathcal{S})$). We maintain the inventory-stabilizing benchmark policy from Figure~\ref{fig:feasible-reference}. Because this policy mirrors the structure of the optimal control, keeping it fixed allows us to focus on robustness within a near-optimal regime.
As shown in Figure~\ref{fig:feasible3}, the local certificate changes only mildly:
changing $\rho$ only changes the baseline occupancy weights, while
the pointwise intensity are unchanged. 
Hence, under the stabilizing policy, the geometry of the local set is largely preserved.
By contrast, the global certificate changes substantially because its
constraint depends on the occupancy ratio. 
For small $\tau$, the log-sum-exp is close to a worst-case constraint, so a large
occupancy ratio at a few states can dominate the certificate.
Compared with $\rho=\delta_{\{q=0\}}$, the uniform initial law spreads mass across states and makes these worst-case ratios less extreme. 
For larger $\tau$, the constraint averages more across states, and the occupancy ratios are closer to one on the states with significant mass. 
This makes the global constraint less restrictive and gives a larger certified region. 

Hence the global certificate, although sometimes much larger, is more sensitive to the policy and initial distribution, while the local certificate is comparatively stable.

% ==================================================================
% ==================================================================
\section{Robustness of Entropy-Regularised Policies: Extended Experiments}\label{app:robustness}
% ==================================================================

This appendix presents empirical evidence that entropy-regularised
policies trained with the arrival-driven framework exhibit improved
worst-case performance under model perturbation.  We evaluate on two
domains---single-asset market making and criss-cross queueing
scheduling---using a common methodology: train at nominal parameters,
then evaluate across a two-dimensional perturbation grid and compare
the worst-case (minimum over the grid) discounted reward of
$\pi_{\mathrm{std}}$ ($\tau = 0$) against $\pi_\tau$ for several
values of the entropy temperature~$\tau$.

% ------------------------------------------------------------------
\subsection{Experimental Setup}\label{ssec:robust_setup}
% ------------------------------------------------------------------

\paragraph{General methodology.}
For each domain, a standard policy $\pi_{\mathrm{std}}$ is trained
with $\tau = 0$ (no entropy bonus), and a family of
entropy-regularised policies $\pi_\tau$ is trained for each $\tau$ in
a logarithmically spaced grid.  All policies are trained at nominal
environment parameters.  At evaluation time, each policy is deployed
without retraining across a $21 \times 21$ grid of perturbed
parameters.  For each grid cell, the mean discounted reward is
estimated over 10 evaluation episodes.  We also evaluate a noise
baseline $\pi_{\mathrm{noise}}$, which uses the $\pi_{\mathrm{std}}$
policy but replaces the chosen action with a uniform random action
with probability $\epsilon = 0.1$.

We report:
\begin{itemize}
  \item \emph{Nominal performance}: mean reward at the unperturbed
    (nominal) grid cell, averaged over seeds.
  \item \emph{Worst-case performance}: minimum over all grid cells of
    the seed-averaged reward.
  \item \emph{Brittleness}: relative drop from nominal to worst-case,
    $(1 - \mathrm{WC}/\mathrm{Nom}) \times 100\%$.
\end{itemize}

% ------------------------------------------------------------------
\subsubsection{Market-Making Environment}\label{sssec:robust_mm_setup}
% ------------------------------------------------------------------

We use a single-asset market-making environment similar to that discussed in the main body of this work (arithmetic Brownian motion
midprice, constant Poisson arrivals) with a logistic fill probability
model.  To ensure the entropy bonus is meaningful relative to the
action space, we reduce the action space to $3 \times 3 = 9$ discrete
actions (bid and ask spreads each drawn from
$\{0.05, 0.25, 0.45\}$), yielding $H_{\max} = \ln 9 \approx 2.20$.

\begin{table}[H]
\centering
\caption{Market-making robustness experiment: environment parameters.}
\label{tab:robust_mm_params}
\small
\begin{tabular}{lrl}
\toprule
Parameter & Value & Description \\
\midrule
Arrival rate $\lambda_R$ & 2.0833 & Order arrival rate \\
Fill intercept $\alpha$ & $-1.0$ & Logistic fill parameter \\
Fill slope $\beta$ & $10.0$ & Logistic fill parameter \\
Inventory penalty $\gamma$ & $0.05$ & Running inventory penalty \\
Volatility $\sigma$ & $2.2822$ & Midprice diffusion coefficient \\
Drift $\mu$ & $0.0$ & Midprice drift \\
Initial price $S_0$ & $100.0$ & Starting midprice \\
Max inventory $q_{\max}$ & $10$ & Inventory bound \\
Terminal time $T$ & $10.0$ & Episode length \\
Trajectories/epoch & $200$ & Batch size \\
\midrule
Bid/ask spread grid & $\{0.05, 0.25, 0.45\}$ & $3 \times 3 = 9$ actions \\
\midrule
Training epochs & $2{,}000$ & Per policy \\
Seeds & $0$--$9$ & 10 independent seeds \\
Network & MLP $[64, 64]$ & Hidden layer sizes \\
Learning rate & $0.001$ & Adam optimiser \\
\bottomrule
\end{tabular}
\end{table}

\paragraph{Perturbation axes.}
We perturb the volatility $\sigma$ and arrival rate $\lambda_R$
jointly via scale factors:
$\sigma' = \sigma \cdot s_\sigma$,
$\lambda_R' = \lambda_R \cdot s_\lambda$,
with $(s_\sigma, s_\lambda) \in [0.25, 4.0]^2$ on a $21 \times 21$
grid.  This range is chosen because volatility enters the optimal
spread quadratically due to the inventory penalty term ($\delta^* \propto \gamma \sigma^2 (T - t)$),
so a $4\times$ increase in $\sigma$ produces a $16\times$ increase
in the inventory penalty contribution, qualitatively changing which
spreads are optimal.

\paragraph{Entropy temperature values.}
$\tau \in \{10^{-4},\; 5 \times 10^{-4},\; 10^{-3},\; 5 \times 10^{-3},\; 10^{-2},\; 10^{-1},\; 1.0\}$.

% ------------------------------------------------------------------
\subsubsection{Queueing Scheduling Environment}\label{sssec:robust_queue_setup}
% ------------------------------------------------------------------

We use the criss-cross queueing network from
\S\ref{sec:experiments}, with two servers and three job classes.
Server~1 can serve classes~1 and~3; server~2 can serve classes~2
and~3.  The four discrete actions correspond to the joint priority
assignments for both servers:
(S1$\to$c1, S2$\to$c2),
(S1$\to$c1, S2$\to$c3),
(S1$\to$c3, S2$\to$c2),
(S1$\to$c3, S2$\to$c3).

\begin{table}[H]
\centering
\caption{Queueing robustness experiment: environment parameters.}
\label{tab:robust_queue_params}
\small
\begin{tabular}{lcccc}
\toprule
& Class 1 & Class 2 & Class 3 & \\
\midrule
Arrival rate $\lambda_i$ & 0.5 & 0.5 & 0.5 & \\
Service rate $\mu_i$ & 2.0 & 3.0 & 2.0 & \\
Holding cost $h_i$ & 3.0 & 1.0 & 2.0 & \\
\midrule
Queue capacity $Q_{\max}$ & \multicolumn{3}{c}{50} & \\
Terminal time $T$ & \multicolumn{3}{c}{50.0} & \\
Trajectories/epoch & \multicolumn{3}{c}{100} & \\
Training epochs & \multicolumn{3}{c}{2{,}000} & \\
Seeds & \multicolumn{3}{c}{$0$--$4$ (5 seeds)} & \\
Network & \multicolumn{3}{c}{MLP $[64, 64]$} & \\
Learning rate & \multicolumn{3}{c}{$0.001$} & \\
\bottomrule
\end{tabular}
\end{table}

\paragraph{Perturbation axes.}
We perturb the arrival rates of classes~1 and~3 independently via
scale factors:
$\lambda_1' = \lambda_1 \cdot s_1$,
$\lambda_3' = \lambda_3 \cdot s_3$,
with $(s_1, s_3) \in [0.5, 3.0]^2$ on a $21 \times 21$ grid, while
$\lambda_2$, all service rates, and all holding costs remain fixed.

This asymmetric perturbation changes which class creates the
bottleneck for server~1 (which must split capacity between classes~1
and~3), thereby changing the optimal scheduling priority.  Crucially,
the base arrival rates are set low enough
($\lambda_i = 0.5$, $\mu_i \geq 2.0$) that even at
$(s_1, s_3) = (3.0, 3.0)$, the system remains within its stability
region.  This ensures that any performance degradation under
perturbation arises from \emph{policy mismatch}---the trained policy
choosing suboptimal priorities---rather than from environmental
overload where no scheduling policy can prevent queue growth.

\paragraph{Entropy temperature values.}
$\tau \in \{10^{-4},\; 5 \times 10^{-4},\; 10^{-3},\; 5 \times 10^{-3},\; 10^{-2},\; 10^{-1},\; 1.0\}$.

% ------------------------------------------------------------------
\subsection{Statistical Methodology}\label{ssec:robust_stats}
% ------------------------------------------------------------------

Each seed produces an independent policy via a different random
initialisation and training trajectory batch.  The trained policy is
then evaluated on independent trajectory batches at each perturbation
grid cell.  For a given policy type (e.g., $\pi_\tau$ at a specific
$\tau$), the mean reward at each grid cell is first averaged over the
evaluation episodes, then averaged across seeds to obtain the
seed-averaged reward surface.  The worst-case performance is the
minimum of this surface over the perturbation grid.

\paragraph{Hypothesis test.}
For each $\tau > 0$, we test whether entropy regularisation improves
worst-case performance:
\[
  H_0\colon \mathrm{WC}(\pi_\tau) \leq \mathrm{WC}(\pi_{\mathrm{std}})
  \qquad \text{vs.} \qquad
  H_1\colon \mathrm{WC}(\pi_\tau) > \mathrm{WC}(\pi_{\mathrm{std}}).
\]
Let $(i^*_\tau, j^*_\tau) = \operatorname{arg\,min}_{i,j}\, \bar{R}_\tau(i,j)$ and
$(i^*_0, j^*_0) = \operatorname{arg\,min}_{i,j}\, \bar{R}_0(i,j)$ denote the
worst-case grid cells for $\pi_\tau$ and $\pi_{\mathrm{std}}$
respectively, where $\bar{R}$ is the seed-averaged reward.  The test
statistic is
\[
  t = \frac{\mathrm{WC}(\pi_\tau) - \mathrm{WC}(\pi_{\mathrm{std}})}
           {\sqrt{\mathrm{SE}_\tau^2 + \mathrm{SE}_0^2}},
\]
where $\mathrm{SE}_\tau$ and $\mathrm{SE}_0$ are the standard errors
(across seeds) at the respective worst-case grid cells.  Under $H_0$,
this follows an approximate $t$-distribution (Welch's $t$-test with
unequal variances).  We report one-sided $p$-values.

\paragraph{Justification.}
Welch's $t$-test is appropriate here because: (i)~each seed produces
a fully independent policy and evaluation, so seeds are the natural
unit of replication; (ii)~Welch's variant does not assume equal
variance across the $\pi_\tau$ and $\pi_{\mathrm{std}}$ conditions;
and (iii)~with 5--10 seeds, the $t$-distribution (rather than a
$z$-test) correctly accounts for small-sample uncertainty in the
variance estimate.  We do not apply multiple comparison corrections
across $\tau$ values, as the goal is to identify the best $\tau$
rather than to claim all values improve robustness.  The systematic
inverted-U pattern across $\tau$ (visible in
Figures~\ref{fig:robust_mm_tau} and~\ref{fig:robust_queue_tau})
provides additional evidence against cherry-picking.

% ------------------------------------------------------------------
\subsection{Market-Making Results}\label{ssec:robust_mm_results}
% ------------------------------------------------------------------

Table~\ref{tab:robust_mm_results} reports the full results for the
market-making experiment.  The standard policy exhibits $7.1\%$
brittleness under volatility and arrival rate perturbation.

\begin{table}[H]
\centering
\caption{Market-making robustness results (10 seeds, 2{,}000 epochs).
  Nominal and worst-case (WC) are seed-averaged discounted rewards.
  Gain is relative to $\pi_{\mathrm{std}}$ worst-case.  Statistically
  significant improvements ($p < 0.05$, one-sided Welch's $t$-test)
  are shown in \textbf{bold}.}
\label{tab:robust_mm_results}
\small
\begin{tabular}{rccccr}
\toprule
$\tau$ & Nominal (SE) & Worst-Case (SE) & WC Gain & $t$-stat & $p$ \\
\midrule
$0$ ($\pi_{\mathrm{std}}$)
  & $231.11\;(0.06)$ & $214.60\;(0.18)$ & --- & --- & --- \\
noise ($\epsilon{=}0.1$)
  & $231.07$ & $214.31$ & $-0.13\%$ & --- & --- \\
\midrule
$10^{-4}$
  & $231.14\;(0.06)$ & $214.80\;(0.30)$ & $+0.10\%$ & $0.60$ & $0.28$ \\
$\mathbf{5 \times 10^{-4}}$
  & $\mathbf{231.22\;(0.06)}$ & $\mathbf{215.27\;(0.21)}$
  & $\mathbf{+0.32\%}$ & $\mathbf{2.48}$ & $\mathbf{<0.01}$ \\
$\mathbf{10^{-3}}$
  & $\mathbf{231.25\;(0.09)}$ & $\mathbf{215.62\;(0.43)}$
  & $\mathbf{+0.48\%}$ & $\mathbf{2.19}$ & $\mathbf{<0.025}$ \\
$5 \times 10^{-3}$
  & $231.07\;(0.05)$ & $214.24\;(0.32)$ & $-0.17\%$ & $-0.96$ & --- \\
$10^{-2}$
  & $231.09\;(0.07)$ & $214.69\;(0.29)$ & $+0.04\%$ & $0.27$ & $0.40$ \\
$10^{-1}$
  & $230.89\;(0.07)$ & $213.77\;(0.23)$ & $-0.39\%$ & $-2.81$ & --- \\
$1.0$
  & $230.65\;(0.05)$ & $211.98\;(0.10)$ & $-1.22\%$ & $-12.81$ & --- \\
\bottomrule
\end{tabular}
\end{table}

Two entropy temperatures yield statistically significant worst-case
improvements: $\tau = 5 \times 10^{-4}$ ($+0.32\%$, $t = 2.48$,
$p < 0.01$) and $\tau = 10^{-3}$ ($+0.48\%$, $t = 2.19$,
$p < 0.025$).  Both also slightly improve nominal performance,
indicating no robustness--performance trade-off.  Larger $\tau$ values
degrade both nominal and worst-case performance, consistent with
excessive entropy overwhelming the reward signal.

\begin{figure}[H]
\centering
\begin{subfigure}[t]{0.48\textwidth}
  \centering
  \includegraphics[width=\textwidth]{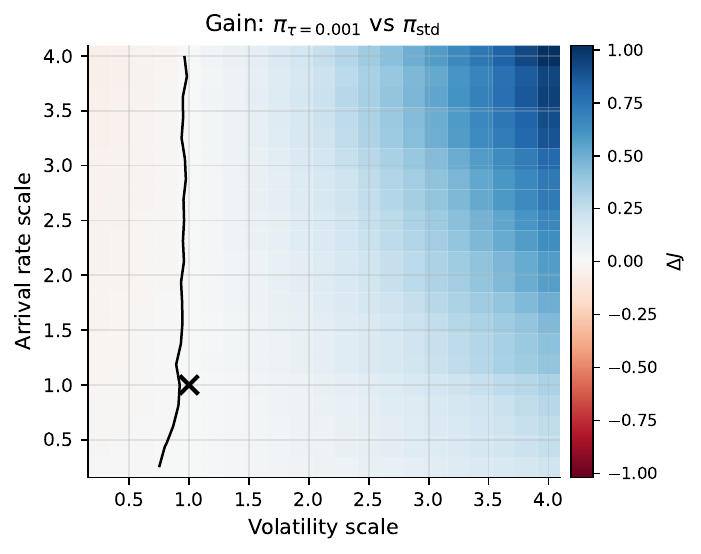}
  \caption{Gain of $\pi_{\tau=5\times10^{-4}}$ over $\pi_{\mathrm{std}}$.}
  \label{fig:robust_mm_gain_std}
\end{subfigure}
\hfill
\begin{subfigure}[t]{0.48\textwidth}
  \centering
  \includegraphics[width=\textwidth]{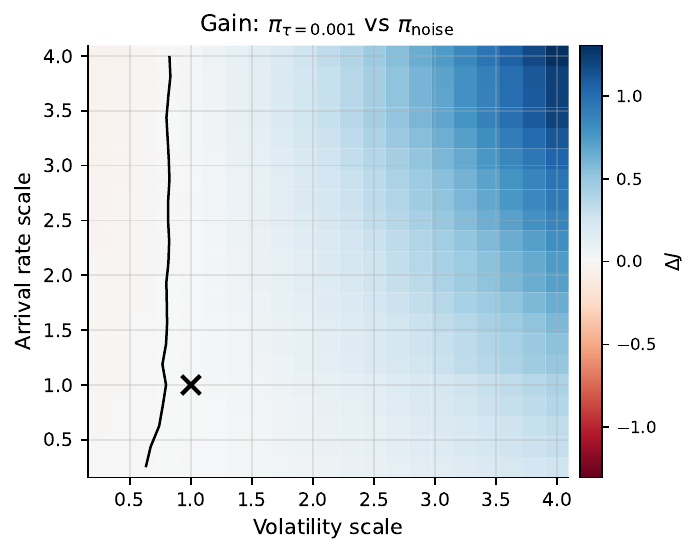}
  \caption{Gain of $\pi_{\tau=5\times10^{-4}}$ over $\pi_{\mathrm{noise}}$.}
  \label{fig:robust_mm_gain_noise}
\end{subfigure}
\caption{Market-making: reward gain heatmaps across the volatility
  scale $\times$ arrival rate scale perturbation grid.  Positive
  (blue) regions indicate where the entropy-regularised policy
  outperforms the baseline.  The entropy-regularised policy gains
  most in the high-volatility region (top of grid), where the optimal
  spread changes qualitatively from the nominal setting.}
\label{fig:robust_mm_heatmaps}
\end{figure}

\begin{figure}[H]
\centering
\includegraphics[width=0.65\textwidth]{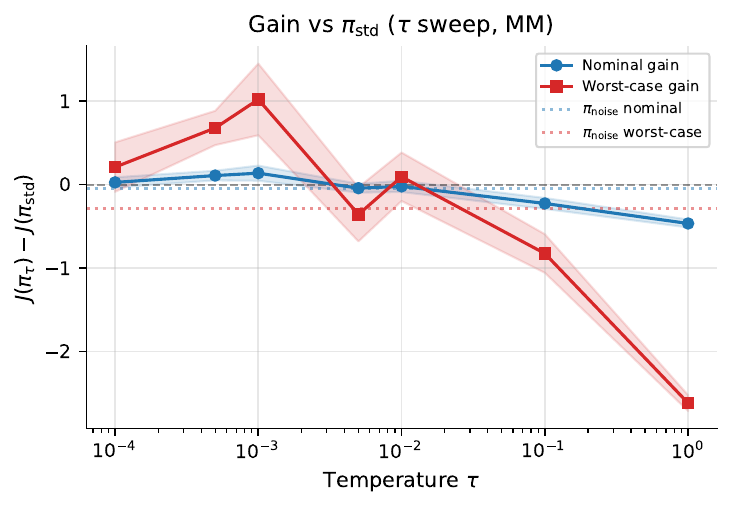}
\caption{Market-making: worst-case and nominal gain vs.\
  $\pi_{\mathrm{std}}$ as a function of $\tau$ (10 seeds, mean
  $\pm$ SE).  The inverted-U shape confirms that moderate entropy
  regularisation improves worst-case robustness, while excessive
  regularisation degrades performance.}
\label{fig:robust_mm_tau}
\end{figure}

% ------------------------------------------------------------------
\subsection{Queueing Scheduling Results}\label{ssec:robust_queue_results}
% ------------------------------------------------------------------

Table~\ref{tab:robust_queue_results} reports the full results for the
queueing scheduling experiment.  The standard policy exhibits $8.9\%$
brittleness under asymmetric arrival rate perturbation.

\begin{table}[H]
\centering
\caption{Queueing robustness results (5 seeds, 2{,}000 epochs).
  Format matches Table~\ref{tab:robust_mm_results}.}
\label{tab:robust_queue_results}
\small
\begin{tabular}{rccccr}
\toprule
$\tau$ & Nominal (SE) & Worst-Case (SE) & WC Gain & $t$-stat & $p$ \\
\midrule
$0$ ($\pi_{\mathrm{std}}$)
  & $14{,}912\;(1.0)$ & $13{,}589\;(105)$ & --- & --- & --- \\
noise ($\varepsilon{=}0.1$)
  & $14{,}912$ & $13{,}615$ & $+0.19\%$ & --- & --- \\
\midrule
$\mathbf{10^{-4}}$
  & $\mathbf{14{,}914\;(0.1)}$ & $\mathbf{13{,}871\;(52)}$
  & $\mathbf{+2.08\%}$ & $\mathbf{2.41}$ & $\mathbf{<0.01}$ \\
$5 \times 10^{-4}$
  & $14{,}913\;(1.2)$ & $13{,}744\;(113)$ & $+1.14\%$ & $1.00$ & $0.16$ \\
$10^{-3}$
  & $14{,}913\;(1.4)$ & $13{,}771\;(128)$ & $+1.34\%$ & $1.09$ & $0.14$ \\
$5 \times 10^{-3}$
  & $14{,}912\;(1.1)$ & $13{,}593\;(112)$ & $+0.03\%$ & $0.02$ & $0.49$ \\
$10^{-2}$
  & $14{,}913\;(1.1)$ & $13{,}666\;(114)$ & $+0.56\%$ & $0.49$ & $0.31$ \\
$10^{-1}$
  & $14{,}911\;(1.1)$ & $13{,}528\;(87)$ & $-0.45\%$ & $-0.45$ & --- \\
$1.0$
  & $14{,}914\;(0.4)$ & $13{,}799\;(32)$ & $+1.55\%$ & $1.91$ & $0.03$ \\
\bottomrule
\end{tabular}
\end{table}

The best entropy temperature is $\tau = 10^{-4}$, which achieves a
statistically significant $+2.08\%$ worst-case improvement ($t = 2.41$,
$p < 0.01$).  At $\tau = 1.0$, the improvement is $+1.55\%$
($t = 1.91$, $p = 0.03$).  As in market making, nominal performance
is virtually unchanged across all $\tau$ values, confirming that the
robustness benefit comes without a performance trade-off.

\begin{figure}[H]
\centering
\begin{subfigure}[t]{0.48\textwidth}
  \centering
  \includegraphics[width=\textwidth]{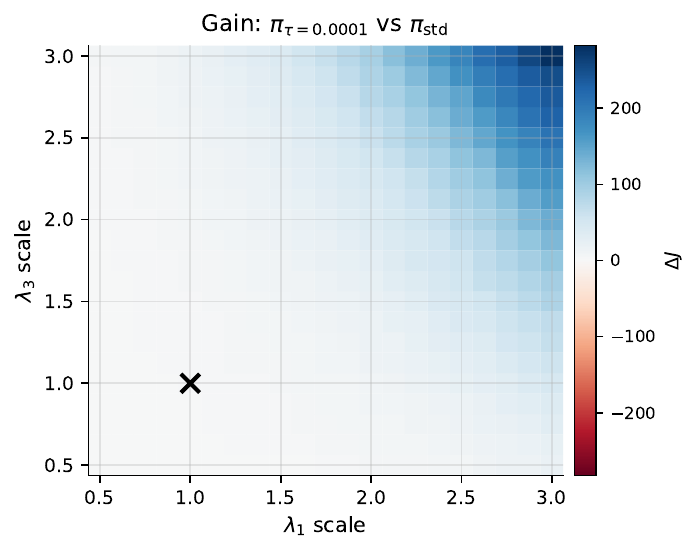}
  \caption{Gain of $\pi_{\tau=10^{-4}}$ over $\pi_{\mathrm{std}}$.}
  \label{fig:robust_queue_gain_std}
\end{subfigure}
\hfill
\begin{subfigure}[t]{0.48\textwidth}
  \centering
  \includegraphics[width=\textwidth]{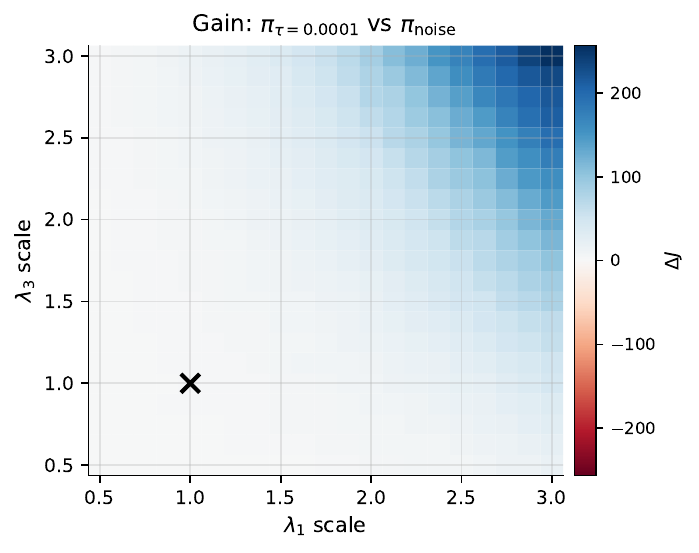}
  \caption{Gain of $\pi_{\tau=10^{-4}}$ over $\pi_{\mathrm{noise}}$.}
  \label{fig:robust_queue_gain_noise}
\end{subfigure}
\caption{Queueing: reward gain heatmaps across the
  $\lambda_1$ scale $\times$ $\lambda_3$ scale perturbation grid.
  The entropy-regularised policy gains most in the high-arrival
  corner ($s_1 = s_3 = 3.0$), where the load balance between
  classes~1 and~3 for server~1 shifts from the nominal setting.}
\label{fig:robust_queue_heatmaps}
\end{figure}

\begin{figure}[H]
\centering
\includegraphics[width=0.65\textwidth]{figures/queueing/fig3b_tau_sweep_relative.pdf}
\caption{Queueing: worst-case and nominal gain vs.\
  $\pi_{\mathrm{std}}$ as a function of $\tau$ (5 seeds, mean
  $\pm$ SE).  Small $\tau$ values (~$10^{-4}$) improve worst-case performance, while $\tau = 10^{-1}$ and greater slightly
  degrades it.}
\label{fig:robust_queue_tau}
\end{figure}

% ------------------------------------------------------------------
\subsection{Discussion}\label{ssec:robust_discussion}
% ------------------------------------------------------------------

Across both domains, entropy regularisation with appropriately chosen
$\tau$ significantly improves worst-case performance under model
perturbation without degrading nominal performance.  The optimal
$\tau$ is small in both cases ($5 \times 10^{-4}$ for market making,
$10^{-4}$ for queueing), indicating that light regularisation
suffices to hedge against parameter uncertainty.

The mechanism is consistent across domains: entropy regularisation
prevents the policy from concentrating all probability mass on a
single action that is optimal only under the nominal parameterisation.
By maintaining probability mass on alternative actions, the
entropy-regularised policy remains competitive when the optimal action
shifts under perturbation.  In market making, higher volatility makes
wider spreads optimal; in queueing, higher arrival rates for class~3
make prioritising class~3 on server~1 optimal.  In both cases,
$\pi_{\mathrm{std}}$ commits too strongly to the nominally optimal
action, while $\pi_\tau$ hedges.

% We note that an earlier version of the queueing experiment perturbed
% arrival and service rates uniformly (scaling all rates by a common
% factor).  This produced $73.9\%$ brittleness but no improvement from
% entropy regularisation.  The root cause was that uniform rate scaling
% pushes the system past its stability boundary ($\rho > 1$), where
% queues grow unboundedly regardless of the scheduling policy.  In this
% regime, brittleness arises from environmental overload rather than
% policy mismatch, and no amount of action-space smoothing can help.
% The asymmetric perturbation used here keeps the system stable while
% changing the optimal policy, isolating the regime where
% entropy regularisation is effective.

% ==================================================================
\section{Certificate Tightness Analysis}\label{app:certificate}
% ==================================================================

This section analyses the tightness of the theoretical robustness
certificate from Theorem~\ref{thm:maxent_robust_ctmdp}.  We use the same
market-making parameterization and perturbation axes as in
Section~\ref{sec:worked_example}, with $\theta = (\alpha_\Lambda, \beta_\Lambda)$.
Recall that the robust set $\mathcal C^\pi_{\tau,\varepsilon}(\theta)$
in~\eqref{eq:robust_set} is defined by the constraint
$\sum_s \bar{\mathrm d}^\pi_{\rho,\theta}(s)\, \tau \log\big[\sum_a \mu(a|s) \exp(c_{\theta,\tilde\theta}(s,a)/\tau)\big] \leq \varepsilon$.
For the market-making parameterization, note that the state-action pair is $(q,\delta)$, 
we define a pointwise certificate function by 
\begin{equation*}
    F(q;\tilde{\theta}) := \tau \log\big[\sum_\delta \mu(\delta|q) \exp(c_{\theta,\tilde\theta}(q,\delta)/\tau)\big], 
\end{equation*}
and the scalar
\begin{equation*}
    G(\tilde{\theta}) := \max_q F(q;\tilde{\theta}) - r\log(1/r).
\end{equation*}

Since $\max_q F \geq \sum_q \bar{\mathrm d}^\pi(q) F(q)$, the condition
$G(\tilde{\theta}) \leq 0$ implies
$\tilde{\theta} \in \mathcal C^\pi_{\tau,\varepsilon}(\theta)$
(with $\varepsilon = r\log(1/r)$).
Thus the certificate guarantees that
the entropy-regularized policy~$\pi_\tau$ maintains performance
whenever $G(\tilde{\theta}) \leq 0$.  We evaluate this on the
single-asset market-making environment with logistic fill
probabilities, perturbing the fill parameters
$(\alpha_\Lambda, \beta_\Lambda)$ and $(\lambda_R, \beta_\Lambda)$ on $21 \times 21$ grids.
The analytical softmax policy
$\pi_\tau(\delta| q) = \mathrm{softmax}(R_\theta(q,\delta)/\tau)$ is used
throughout, matching the theoretical assumptions exactly, along with learned PG and A2C policies.

% ------------------------------------------------------------------
\subsection{$G$ as a Continuous Robustness Predictor}\label{ssec:cert_scatter}
% ------------------------------------------------------------------

Figure~\ref{fig:cert_scatter} plots the certificate value
$G(\tilde{\theta})$ against the empirical performance ratio
$J(\pi, \tilde{\theta}) / J(\pi, \theta)$ for all
$441$ grid cells on the $\alpha_\Lambda$-$\beta_\Lambda$ perturbation plane,
evaluated under three policy types: the analytical softmax policy
$\pi_\tau(\delta| q) = \mathrm{softmax}(R_\theta(q,\delta)/\tau)$ that the
certificate theorem assumes, and two learned policies (Soft-PG and
Soft-AC) trained with entropy regularization.  All three policy
types cluster tightly together, confirming that the learned
policies converge to essentially the same behavior as the
theoretical optimum in this environment.
The certificate is valid across all policy types: every certified
point ($G \leq 0$, black squares) achieves
$J/J_{\mathrm{nom}} \geq 0.95$, with zero violations.  Moreover,
$G$ is monotonically predictive of degradation across the entire
grid---not just within the certified region.
Table~\ref{tab:cert_tightness} quantifies this: no cell
with $J/J_{\mathrm{nom}} < 0.90$ has $G < 1.03$, and no cell with
$J/J_{\mathrm{nom}} < 0.50$ has $G < 36.4$.

\begin{figure}[H]
\centering
\includegraphics[width=1.01\textwidth]{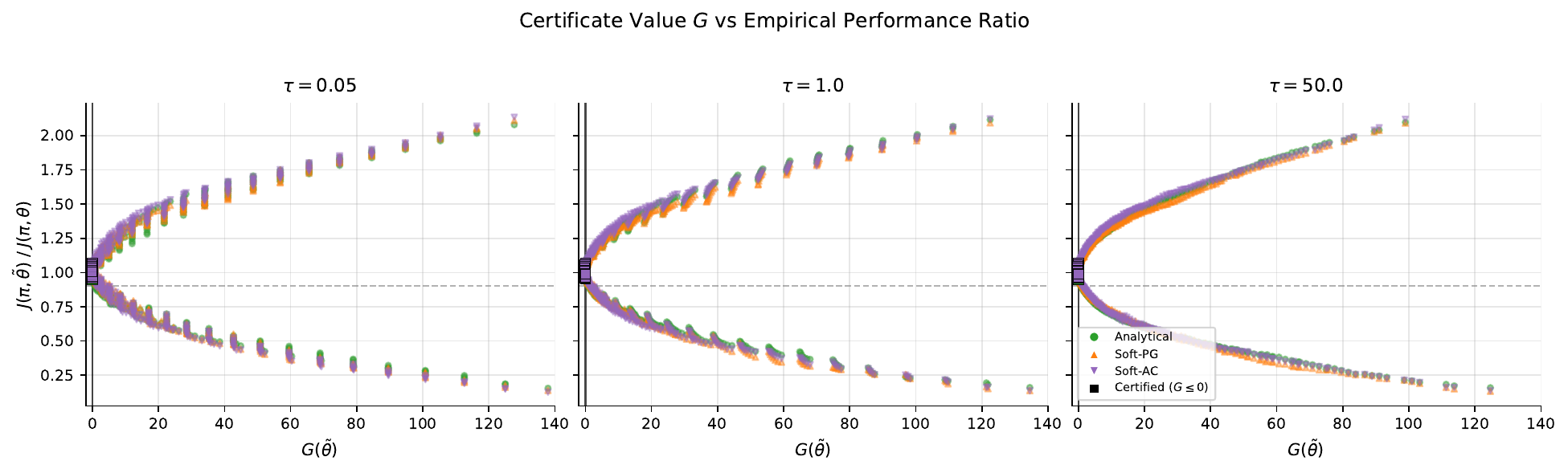}
\caption{Certificate value $G(\tilde{\theta})$ vs empirical
  performance ratio for three entropy temperatures and three
  policy types (analytical softmax, Soft-PG, Soft-AC).  All
  three cluster tightly, confirming that learned policies match
  the theoretical optimum.  Certified points ($G \leq 0$, black
  squares) all lie above the $0.9$ threshold (dashed grey).}
\label{fig:cert_scatter}
\end{figure}

\begin{table}[H]
\centering
\caption{Tightness of the certificate ($\tau = 1.0$,
  $\alpha_\Lambda$-$\beta_\Lambda$ plane).  For each $G$ threshold, the table
  reports the number of included grid cells and the minimum
  empirical performance ratio within that set.}
\label{tab:cert_tightness}
\small
\begin{tabular}{rccc}
\toprule
$G$ threshold & Cells included & Min $J/J_{\mathrm{nom}}$ & \\
\midrule
$G \leq 0$ (certificate) & 24 / 441 & 0.953 & \\
$G \leq 1$   & 69  & 0.903 & \\
$G \leq 5$   & 144 & 0.806 & \\
$G \leq 10$  & 202 & 0.726 & \\
$G \leq 50$  & 367 & 0.449 & \\
\bottomrule
\end{tabular}
\end{table}

The certificate boundary $G = 0$ is conservative: the empirical
$90\%$-robustness region contains $257$ cells while the certificate
guarantees only $24$.  However, the conservatism is bounded---the
``true'' $G$ threshold for $90\%$ robustness is approximately
$G \approx 1$, so the certificate is off by roughly one $G$-unit
rather than by orders of magnitude.

% ------------------------------------------------------------------
\subsection{Certified Region Size vs $\tau$}\label{ssec:cert_tau}
% ------------------------------------------------------------------

Figure~\ref{fig:cert_tau} plots the size of the certified region
$|\{\tilde{\theta} : G(\tilde{\theta}) \leq 0\}|$ as a function
of the entropy temperature~$\tau$, computed on a dense logarithmic
grid from $\tau = 10^{-3}$ to $\tau = 10^2$.  The theoretical
prediction is confirmed: the certified region grows monotonically
with~$\tau$ on both the $\alpha_\Lambda$-$\beta_\Lambda$ and
$\lambda_R$-$\beta_\Lambda$ perturbation planes.  The growth
saturates around $\tau \approx 1$, reaching $24$ cells
($\alpha_\Lambda$-$\beta_\Lambda$) and $26$ cells ($\lambda_R$-$\beta_\Lambda$).
This plateau reflects the geometric structure of the certificate:
as $\tau \to \infty$, the soft-max in the $G$ formula converges
to a hard maximum, and the certified set approaches its
$\tau$-independent upper bound.

For comparison, the empirical robust region (defined as
$J/J_{\mathrm{nom}} \geq 0.9$) contains $\sim\!260$ cells
and is approximately constant across~$\tau$.  The ${\sim}10\times$
gap between certified and empirical region sizes reflects the
conservatism identified in \S\ref{ssec:cert_scatter}.

\begin{figure}[H]
\centering
\includegraphics[width=0.6\textwidth]{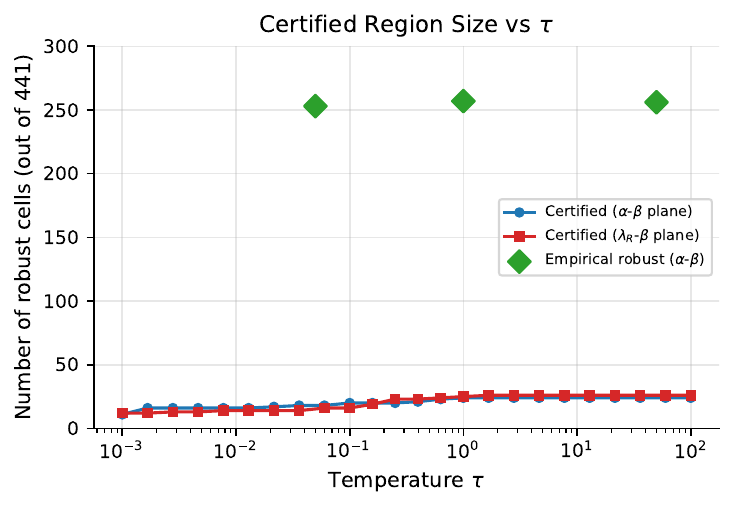}
\caption{Certified region size (number of grid cells with
  $G \leq 0$) as a function of~$\tau$.  Both perturbation planes
  show monotone growth, confirming the theoretical prediction.
  Green diamonds show the empirical robust region size for
  reference.}
\label{fig:cert_tau}
\end{figure}

% ------------------------------------------------------------------
\subsection{Transition vs Reward Decomposition}\label{ssec:cert_decomp}
% ------------------------------------------------------------------

The certificate function decomposes as
$X_\delta(q) = \ell_\delta(q) - r \cdot \log(R_{\tilde\theta}(q,\delta) / R_\theta(q,\delta))$,
where $\ell_a$ is the local relative entropy (KL divergence)
between baseline and perturbed fill intensities
(\emph{transition perturbation}), and the log-ratio term captures
the \emph{reward perturbation}.  Figure~\ref{fig:cert_decomp}
decomposes these two contributions at representative perturbation
points.

\begin{figure}[H]
\centering
\includegraphics[width=0.75\textwidth]{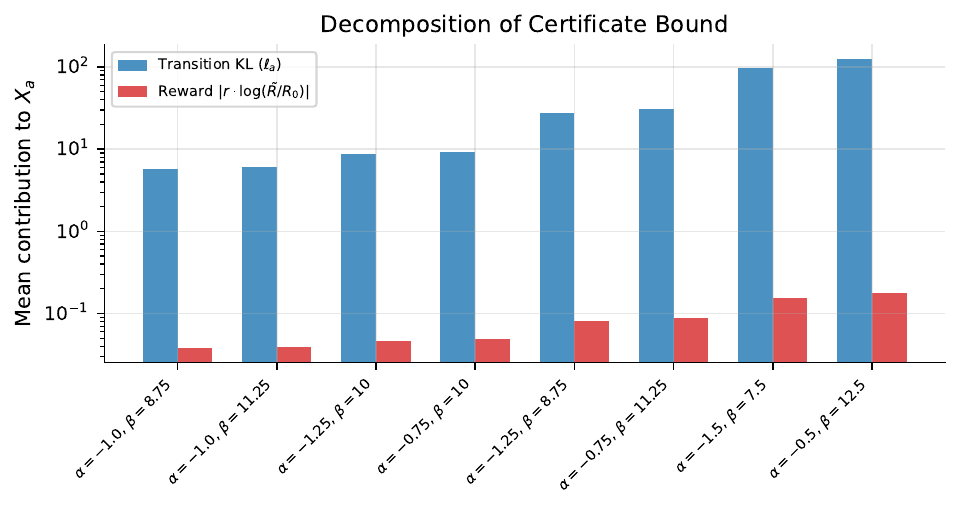}
\caption{Decomposition of the certificate bound into transition
  KL ($\ell_a$) and reward perturbation
  ($|r \cdot \log(R_{\tilde\theta}/R_\theta)|$) contributions, shown on a
  log scale.  The transition KL dominates by $1.5$--$3$ orders
  of magnitude at all perturbation points.}
\label{fig:cert_decomp}
\end{figure}

The transition KL term dominates overwhelmingly: it exceeds the
reward term by a factor of $150$--$800\times$ across all grid
points.  This has two implications.  First, the policy is
inherently robust to reward misspecification---a reward-only
certificate (setting $\ell_\delta = 0$) would cover nearly the entire
perturbation grid.  Second, the conservatism of the joint
certificate is driven entirely by the transition KL term.  Since
the local relative entropy of logistic fill intensities grows
quadratically with parameter perturbation, even moderate changes
in $(\alpha_\Lambda, \beta_\Lambda)$ produce large KL values that push $G$ above
zero.  Tightening the transition bound (e.g., via action-dependent
KL budgets or state-dependent certificates) would directly enlarge
the certified region.

% ------------------------------------------------------------------
\subsection{Worst-Case Adversary Within the Certified Set}\label{ssec:cert_adversary}
% ------------------------------------------------------------------

For each $\tau$, we identify the worst-case perturbation
\emph{within} the certified set: \\
\begin{center}
$\tilde{\theta}^* = \arg\min_{\{G(\tilde{\theta}) \leq 0\}}
J(\pi_\tau, \tilde{\theta})$.
\end{center}

\begin{table}[H]
\centering
\caption{Worst-case adversary within the certified set
  ($\alpha_\Lambda$-$\beta_\Lambda$ plane) for all three policy types.
  The certificate guarantees robustness at these points;
  the empirical performance ratio confirms it.}
\label{tab:cert_adversary}
\small
\begin{tabular}{rlcccc}
\toprule
$\tau$ & Policy & Worst-case $(\alpha_\Lambda, \beta_\Lambda)$ & $G(\tilde{\theta}^*)$
  & $J$ & $J / J_{\mathrm{nom}}$ \\
\midrule
\multirow{3}{*}{$0.05$}
  & Analytical & $(-1.00, 10.25)$ & $-0.088$ & $58.66$ & $0.946$ \\
  & Soft-PG    & $(-1.00, 10.25)$ & $-0.088$ & $58.16$ & $0.969$ \\
  & Soft-AC    & $(-1.00, 10.25)$ & $-0.088$ & $57.37$ & $0.961$ \\
\midrule
\multirow{3}{*}{$1.0$}
  & Analytical & $(-0.80, 9.00)$  & $-0.019$ & $58.16$ & $0.953$ \\
  & Soft-PG    & $(-1.00, 10.25)$ & $-0.114$ & $58.49$ & $0.967$ \\
  & Soft-AC    & $(-1.00, 10.25)$ & $-0.114$ & $57.77$ & $0.960$ \\
\midrule
\multirow{3}{*}{$50.0$}
  & Analytical & $(-1.00, 10.25)$ & $-0.116$ & $57.63$ & $0.946$ \\
  & Soft-PG    & $(-1.00, 10.25)$ & $-0.116$ & $58.17$ & $0.960$ \\
  & Soft-AC    & $(-1.00, 10.25)$ & $-0.116$ & $57.78$ & $0.959$ \\
\bottomrule
\end{tabular}
\end{table}

Across all policy types and all $\tau$ values, the worst-case
certified point achieves $J/J_{\mathrm{nom}} \geq 0.946$, well
above the $0.9$ robustness threshold.  The learned policies
(Soft-PG, Soft-AC) achieve slightly \emph{higher} worst-case
ratios than the analytical softmax ($0.96$--$0.97$ vs $0.95$),
likely because the narrow action space makes all policies nearly
equivalent.  The worst-case perturbations are consistently at
$(\alpha_\Lambda, \beta_\Lambda) = (-1.00, 10.25)$---a $\beta_\Lambda$ shift of only
$0.25$ from nominal---with $G$ values close to zero (boundary of
the certified set).  This confirms that the certificate boundary
coincides with the ``hardest'' perturbations it can guarantee,
and that the guarantee holds equally for learned and analytical
policies.

% ------------------------------------------------------------------
\subsection{Perturbation Budget Curves}\label{ssec:cert_budget}
% ------------------------------------------------------------------

Figure~\ref{fig:cert_budget} provides one-dimensional
cross-sections through the certificate surface and empirical
performance, fixing one parameter at its nominal value and sweeping
the other.  These curves directly visualise the safety margin
between the certificate boundary ($G = 0$, blue dashed line) and
the onset of significant performance degradation
($J/J_{\mathrm{nom}} = 0.9$, red dashed line).

\begin{figure}[H]
\centering
\includegraphics[width=\textwidth]{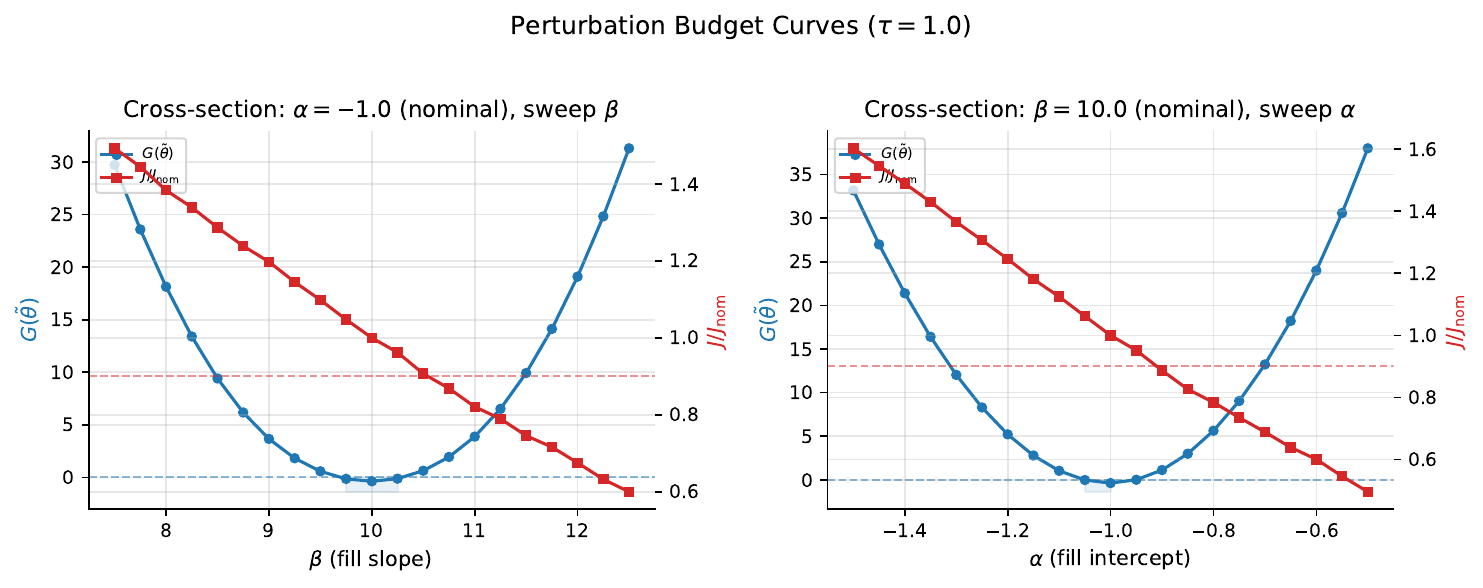}
\caption{One-dimensional cross-sections at $\tau = 1.0$.
  \textbf{Left:} sweep $\beta_\Lambda$ with $\alpha_\Lambda = -1.0$ fixed.
  \textbf{Right:} sweep $\alpha_\Lambda$ with $\beta_\Lambda = 10.0$ fixed.
  The certificate boundary ($G = 0$, blue) is crossed well before
  the empirical $90\%$ performance boundary (red), quantifying the
  conservatism along each perturbation direction.}
\label{fig:cert_budget}
\end{figure}

Along the $\beta_\Lambda$-axis (left panel), the certificate allows
perturbations of roughly $\pm 0.5$ around the nominal $\beta_\Lambda = 10$,
while the empirical $90\%$ robustness extends to approximately
$\beta_\Lambda \in [8.5, 12.0]$.  Along the $\alpha_\Lambda$-axis (right panel),
the certificate permits $\alpha_\Lambda \in [-1.1, -0.9]$ while the
empirical region extends to $\alpha_\Lambda \in [-1.3, -0.7]$.  In both
directions, the certificate is conservative by a factor of roughly
$2$--$3\times$ in perturbation magnitude.  Importantly, performance
degrades \emph{gradually} beyond the certificate boundary (there
is no cliff edge), suggesting that the certificate provides a
useful (if conservative) early warning even beyond its formal
guarantee.

% ------------------------------------------------------------------
\subsection{Reward-Only Certificate}\label{ssec:cert_reward_only}
% ------------------------------------------------------------------

Theorem~\ref{thm:exact_reward_only_robustness} provides an exact equivalence for
reward-only perturbations (setting $\ell_\delta = 0$). Since
\S\ref{ssec:cert_decomp} established that the transition KL term
$\ell_\delta$ dominates the certificate by $150$--$800\times$ over the
reward term, we expect that dropping $\ell_\delta$ should certify nearly
the entire perturbation grid.

Figure~\ref{fig:cert_reward_scatter} confirms this prediction.
The reward-only certificate $G_{\mathrm{rw}}$ certifies
$401$--$409$ out of $441$ grid cells across all three
$\tau$ values, compared to $18$--$24$ for the joint
certificate~$G$.

\begin{figure}[H]
\centering
\includegraphics[width=\textwidth]{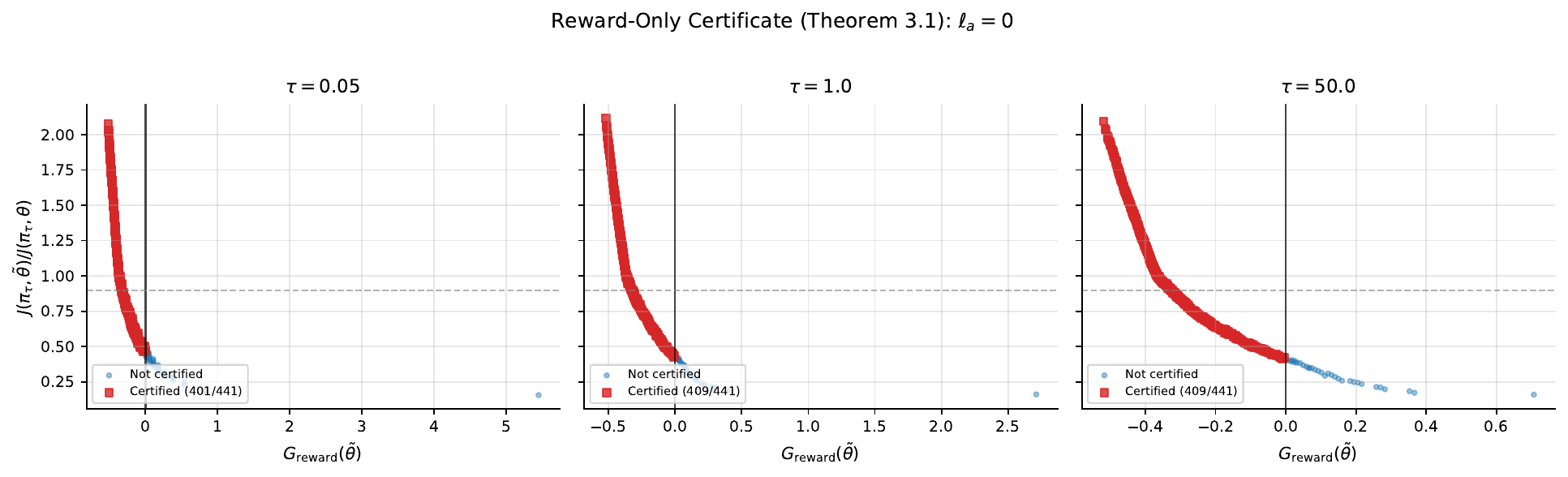}
\caption{Reward-only certificate (Theorem~\ref{thm:exact_reward_only_robustness},
  $\ell_\delta = 0$) vs empirical performance ratio. The reward-only
  certificate covers ${\sim}93\%$ of the perturbation grid
  with zero violations, confirming that the policy is inherently
  robust to reward misspecification.}
\label{fig:cert_reward_scatter}
\end{figure}

Figure~\ref{fig:cert_reward_tau} shows the reward-only certified
region size across the full $\tau$ sweep. While the joint
certificate~$G$ covers only $5$--$24$ cells, the reward-only
certificate covers ${\sim}400$ cells across the entire
temperature range, with the gap to the empirical robust region
(${\sim}255$ cells) closing to a factor of ${\sim}1.1$.

\begin{figure}[H]
\centering
\includegraphics[width=0.6\textwidth]{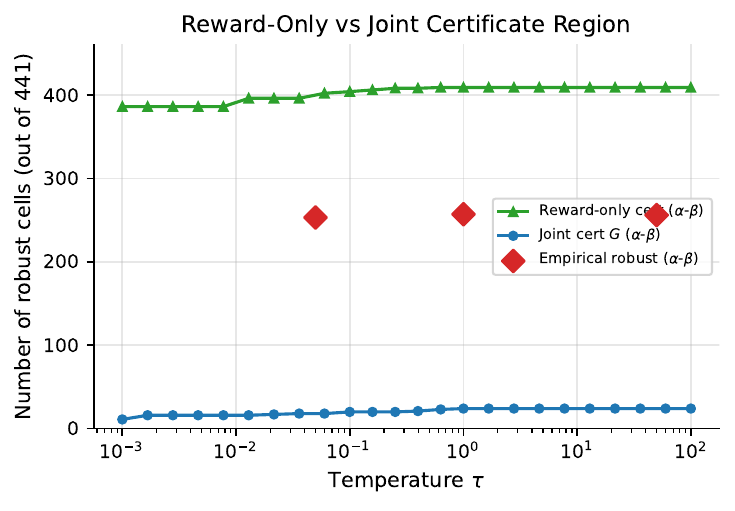}
\caption{Certified region size vs $\tau$: reward-only certificate
  (green) vs joint $G$ (blue). The large gap confirms that
  certificate conservatism is entirely driven by the transition
  KL term.}
\label{fig:cert_reward_tau}
\end{figure}

This analysis directly identifies the source of conservatism:
the local relative entropy of logistic fill intensities grows
quadratically with parameter deviation, producing large KL values
even for moderate changes in $(\alpha_\Lambda, \beta_\Lambda)$. Tightening the
transition bound---e.g., via action-dependent KL budgets---would
directly enlarge the certified region.

% ------------------------------------------------------------------
\subsection{Global Certificate Tightness}\label{ssec:cert_global}
% ------------------------------------------------------------------

Theorem~\ref{thm:ct_global_sa_robust} defines a global
occupancy-based certificate $\Phi_{\mathrm{glob}}$ that weights
by the occupancy measure under perturbed dynamics, potentially
yielding a less conservative bound than the local $G$ which takes
a maximum over states. We also consider the occupancy-weighted
local certificate $G_w = \sum_q \mu_0(q) F(q) - r\log(1/r)$,
which uses the \emph{nominal} occupancy as weights (so that
$G_w \leq G$ by construction).

Table~\ref{tab:cert_comparison} compares all certificate variants.
The occupancy-weighted certificates certify modestly more cells
than the state-max $G$: at $\tau = 1.0$,
$G_w$ certifies $25$ cells and
$\Phi_{\mathrm{glob}}$ certifies $25$ cells, compared to
$24$ for~$G$.

\begin{table}[H]
\centering
\caption{Certified region sizes (number of grid cells) on the
  $\alpha_\Lambda$-$\beta_\Lambda$ plane for different certificate variants.
  The empirical robust region (defined as $J/J_{\mathrm{nom}} \geq 0.9$)
  is shown for reference.}
\label{tab:cert_comparison}
\small
\begin{tabular}{lccc}
\toprule
Certificate & $\tau = 0.05$ & $\tau = 1.0$ & $\tau = 50.0$ \\
\midrule
$G$ (state-max, Thm~\ref{thm:maxent_robust_ctmdp})
  & 18 & 24 & 24 \\
$G_w$ (occupancy-weighted)
  & 19 & 25 & 26 \\
$\Phi_{\mathrm{glob}}$ (Thm~\ref{thm:ct_global_sa_robust})
  & 18 & 25 & 26 \\
$G_{\mathrm{rw}}$ (reward-only, Thm~\ref{thm:exact_reward_only_robustness})
  & 401 & 409 & 409 \\
Empirical ($J/J_{\mathrm{nom}} \geq 0.9$)
  & 253 & 257 & 256 \\
\bottomrule
\end{tabular}
\end{table}

\begin{figure}[H]
\centering
\includegraphics[width=\textwidth]{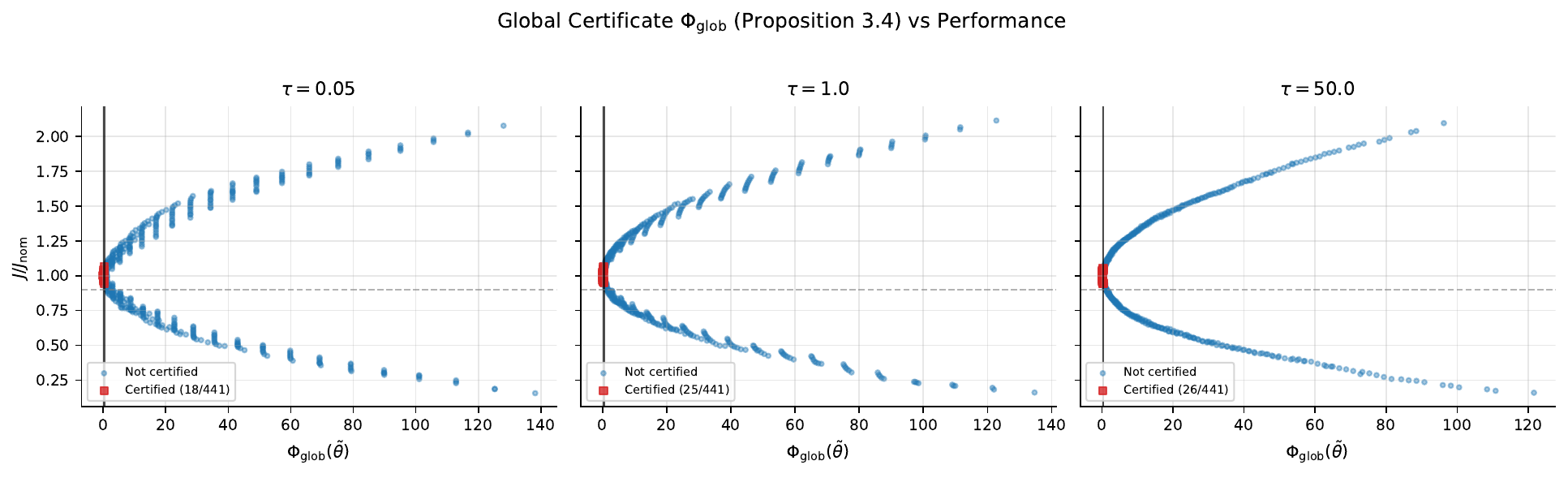}
\caption{Global certificate $\Phi_{\mathrm{glob}}$ vs empirical
  performance ratio. The vertical line marks the certificate
  threshold $r \log(1/r) \approx 0.37$. All certified points
  ($\Phi_{\mathrm{glob}} \leq r\log(1/r)$, red squares) achieve
  $J/J_{\mathrm{nom}} > 0.94$.}
\label{fig:cert_global_scatter}
\end{figure}

\begin{figure}[H]
\centering
\includegraphics[width=0.6\textwidth]{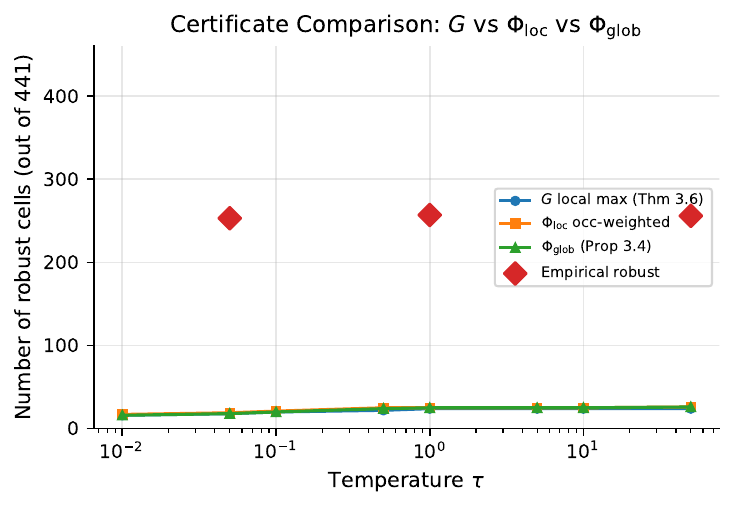}
\caption{Certified region size vs $\tau$ for three certificate
  variants: $G$ (state-max, blue), $G_w$
  (occupancy-weighted, orange), and $\Phi_{\mathrm{glob}}$
  (global, green). Green diamonds show the empirical robust
  region. All three certificates show monotone growth, with
  the occupancy-weighted variants certifying slightly more cells.}
\label{fig:cert_global_tau}
\end{figure}

The modest improvement from occupancy weighting reflects the
structure of the inventory-stabilising policy: because the
softmax policy penalises extreme inventories, the occupancy
measure concentrates near $q = 0$, where the $F$-values are
similar to their maximum. In environments where occupancy
is more dispersed (e.g., with weaker inventory penalties),
the global certificate would provide a larger improvement.

\begin{figure}[H]
\centering
\includegraphics[width=0.55\textwidth]{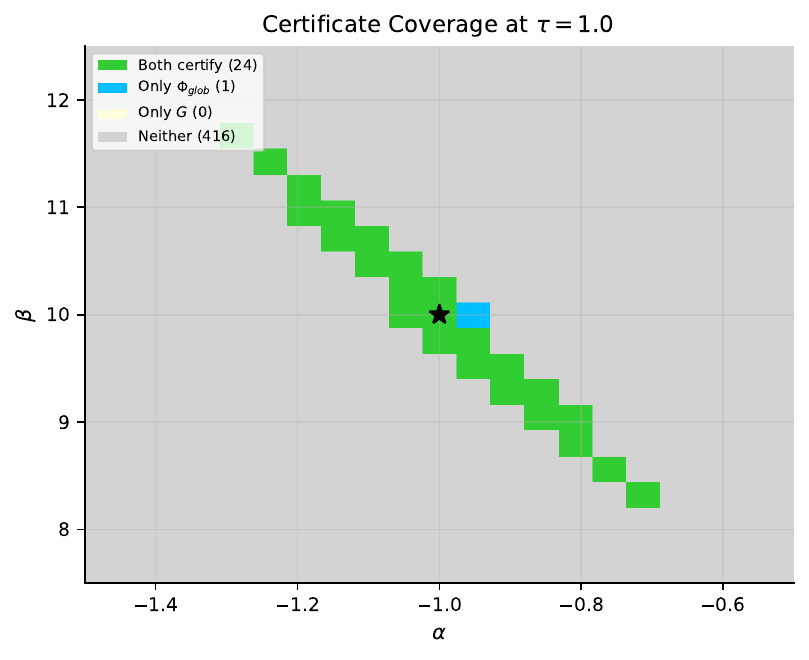}
\caption{Certificate coverage heatmap at $\tau = 1.0$.
  Green cells are certified by both $G$ and
  $\Phi_{\mathrm{glob}}$; blue cells are certified only by
  $\Phi_{\mathrm{glob}}$. The black star marks the nominal
  operating point.}
\label{fig:cert_global_heatmap}
\end{figure}

Figure~\ref{fig:cert_state_scatter} further confirms that
$G_w \leq G$ pointwise across all 441 grid cells, with most
points clustering near the diagonal---indicating that the
worst-case state dominates the average. That the improvement
from occupancy-weighting is small (only $1$--$2$ additional cells)
confirms that the conservatism of~$G$ is \emph{not} driven by the
max-over-states aggregation. Rather, the bottleneck lies in the
per-state bound $F(q)$, which aggregates the transition KL
uniformly over all actions.

\begin{figure}[H]
\centering
\includegraphics[width=0.5\textwidth]{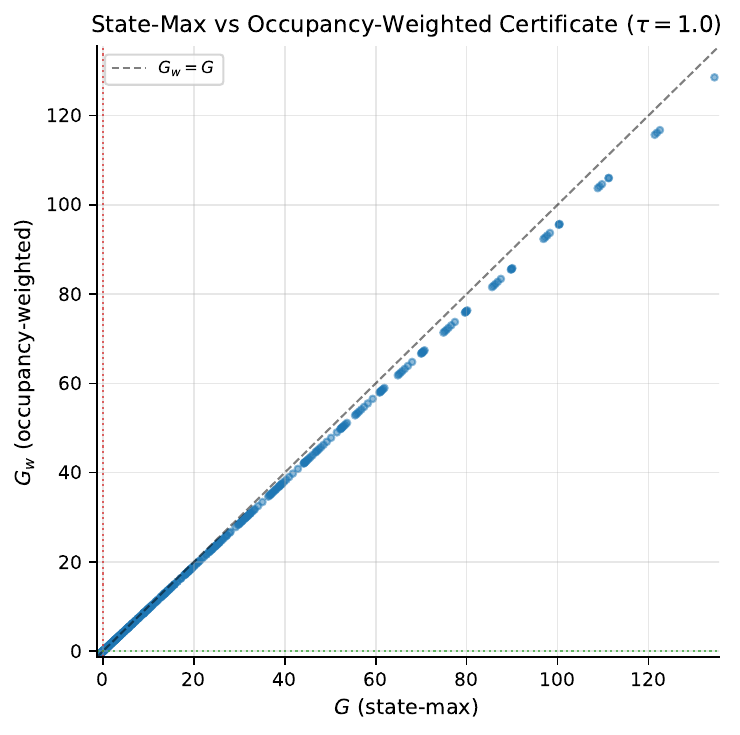}
\caption{$G$ (state-max) vs $G_w$ (occupancy-weighted) for all
  $441$ grid cells at $\tau = 1.0$. All points lie below the
  diagonal ($G_w \leq G$), confirming the theoretical ordering.
  Most points cluster near the diagonal, indicating that the
  worst-case state dominates the average.}
\label{fig:cert_state_scatter}
\end{figure}

\begin{figure}[H]
\centering
\includegraphics[width=0.55\textwidth]{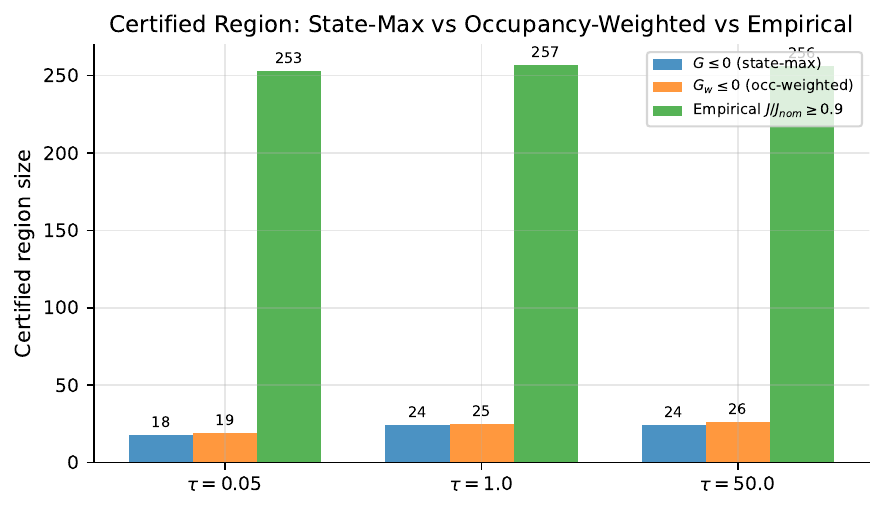}
\caption{Certified region sizes for $G$ (state-max), $G_w$
  (occupancy-weighted), and the empirical robust region.
  Occupancy-weighting provides a small improvement
  ($1$--$2$ additional cells), but the ${\sim}10\times$ gap
  to the empirical region remains.}
\label{fig:cert_state_bar}
\end{figure}

% ------------------------------------------------------------------
\subsection{Policy-Weighted KL Decomposition}\label{ssec:cert_policy_kl}
% ------------------------------------------------------------------

The certificate computes an effective per-state KL contribution
via the soft-max aggregation
$\ell_{\mathrm{cert}}(q) = \tau \bigl(\mathrm{lse}(\ell/\tau) -
\log|\mathcal{A}|\bigr)$, which aggregates over all actions.
The actual policy $\pi_\tau(\cdot \mid q) =
\mathrm{softmax}(R_\theta(q, \cdot)/\tau)$ concentrates mass on
high-reward (often low-KL) actions, so the effective KL
experienced by the policy is
$\ell_\pi(q) = \sum_\delta \pi(\delta \mid q) \, \ell_\delta(q)$.

\begin{figure}[H]
\centering
\includegraphics[width=0.7\textwidth]{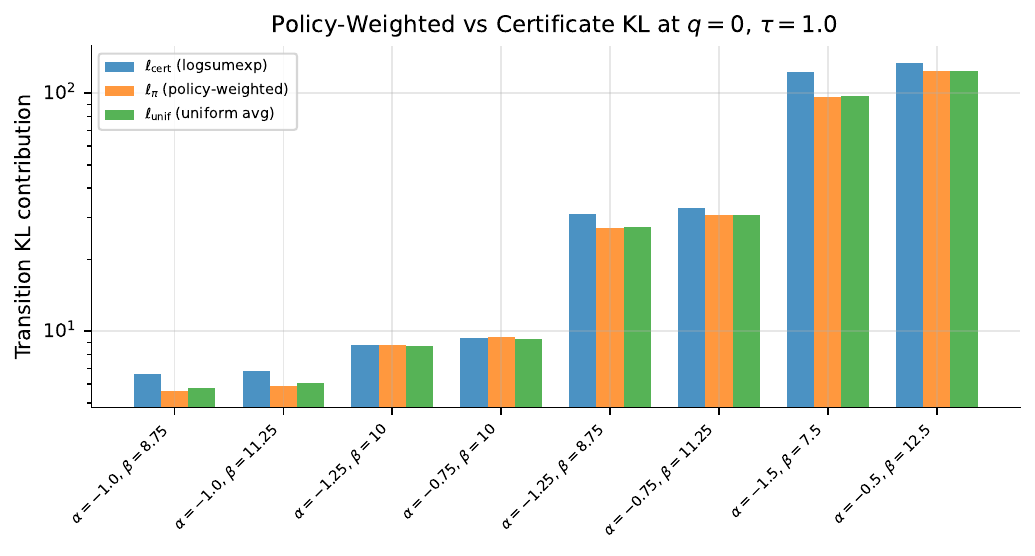}
\caption{Comparison of three KL aggregation methods at $q = 0$
  ($\tau = 1.0$) across $8$ representative perturbation points.
  The certificate's logsumexp aggregation (blue) exceeds the
  policy-weighted KL (orange) and the uniform average (green),
  but the amplification is modest ($1$--$2.5\times$).}
\label{fig:cert_kl_comparison}
\end{figure}

\begin{figure}[H]
\centering
\includegraphics[width=0.55\textwidth]{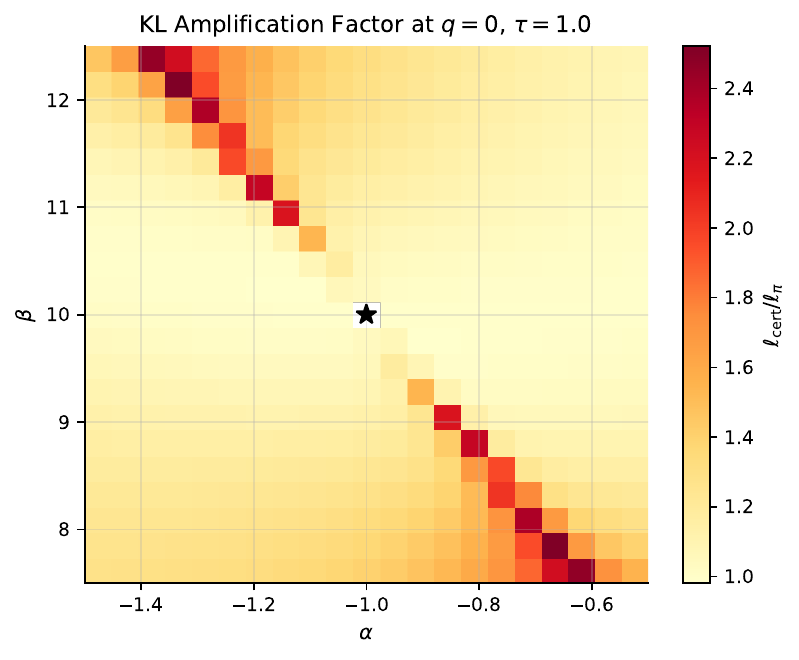}
\caption{KL amplification ratio
  $\ell_{\mathrm{cert}} / \ell_\pi$ across the
  $\alpha_\Lambda$-$\beta_\Lambda$ grid at $q = 0$, $\tau = 1.0$.
  The ratio ranges from $1.0$ (at small perturbations)
  to $2.5$ (at grid extremes). The black star marks the
  nominal point.}
\label{fig:cert_kl_heatmap}
\end{figure}

The amplification factor is modest ($1.0$--$2.5\times$),
indicating that at $\tau = 1.0$ the softmax policy distributes
weight broadly enough that the logsumexp aggregation closely
approximates the policy-weighted KL. At smaller $\tau$
(more concentrated policies), the amplification would be larger.
Combined with the finding that state-max and occupancy-weighted
certificates differ minimally (\S\ref{ssec:cert_global}),
this localises the certificate's conservatism to the inherent
gap between the local relative entropy of logistic intensities
and the actual performance degradation---a gap that could be
closed with distribution-dependent KL bounds.

% ------------------------------------------------------------------
\subsection{Action Space Convergence}\label{ssec:cert_action_space}
% ------------------------------------------------------------------

Example~\ref{example:robust set} shows that for discrete-time
robust sets, the certificate degenerates as the action space
grows: the discrete-time KL penalty scales with $|\mathcal{A}|$,
eventually making the robust set empty. In continuous time,
the normalisation $\log|\mathcal{A}|$ in the certificate formula
compensates for the action space size. We verify this
numerically by computing the certified region for action grids
ranging from $|\mathcal{A}| = 25$ ($5 \times 5$) to
$|\mathcal{A}| = 10{,}000$ ($100 \times 100$).

\begin{figure}[H]
\centering
\includegraphics[width=0.55\textwidth]{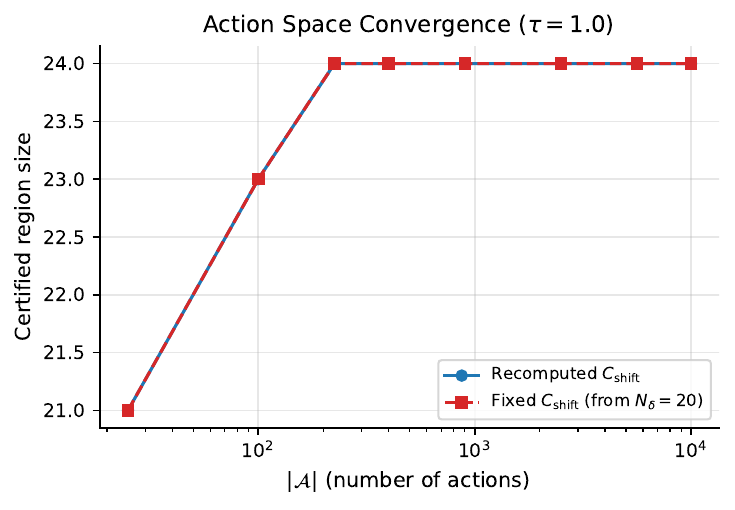}
\caption{Certified region size vs action space cardinality at
  $\tau = 1.0$. Both the recomputed $C_{\mathrm{shift}}$
  (blue) and fixed $C_{\mathrm{shift}}$ (red) variants converge
  by $|\mathcal{A}| \approx 225$, confirming that the
  continuous-time certificate remains non-degenerate.}
\label{fig:cert_action_conv}
\end{figure}

\begin{figure}[H]
\centering
\includegraphics[width=0.55\textwidth]{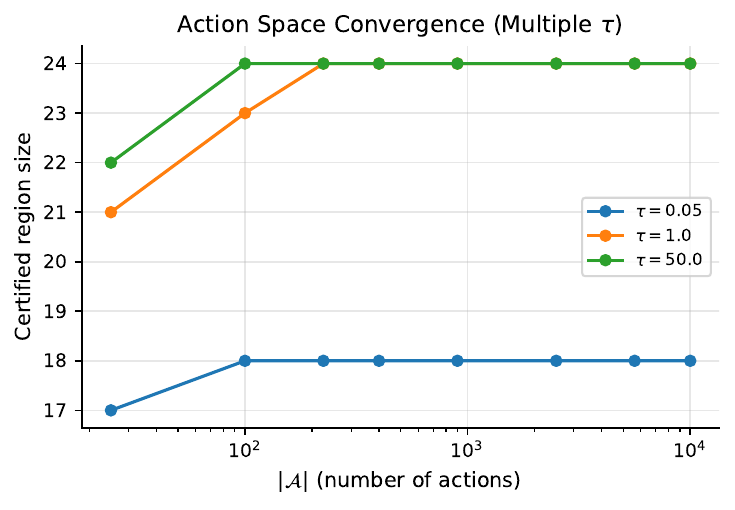}
\caption{Action space convergence for three entropy temperatures.
  All three converge rapidly and plateau at their respective
  certified region sizes ($18$--$24$ cells). The certificate
  is non-degenerate across $400\times$ variation in
  $|\mathcal{A}|$.}
\label{fig:cert_action_multi_tau}
\end{figure}

The reward shift $C_{\mathrm{shift}}$ is constant at $38.66$
across all action grid sizes because the minimum reward always
occurs at the grid boundary $\delta = \delta_{\min} = 0.12$
with extreme inventory $q = \pm 10$.

\begin{figure}[H]
\centering
\includegraphics[width=0.55\textwidth]{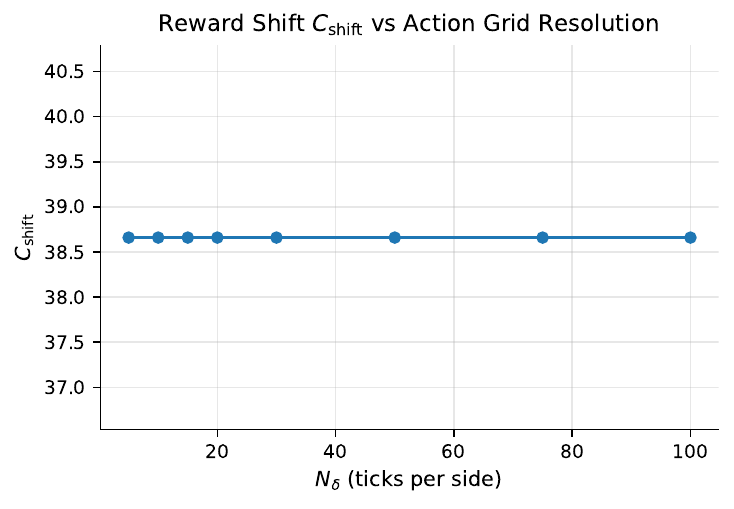}
\caption{Reward shift $C_{\mathrm{shift}}$ vs action grid
  resolution $N_\delta$. The shift is constant because the
  minimum reward is determined by the grid boundary, not the
  interior resolution.}
\label{fig:cert_action_cshift}
\end{figure}

The convergence is rapid: the certified region stabilises
by $N_\delta = 15$ ($|\mathcal{A}| = 225$) for all $\tau$
values. This confirms the theoretical advantage of the
continuous-time certificate formulation over its discrete-time
counterpart: the $\log|\mathcal{A}|$ normalisation ensures
that refining the action discretisation does not degrade the
certificate, in stark contrast to the degeneration described
in Example~\ref{example:robust set}.

% ==================================================================
\section{Event-Based vs Fixed-Grid Time discretization}\label{app:discretization_extended}
% ==================================================================

This section presents empirical evidence comparing two approaches for
implementing continuous-time environments as RL training
simulators: \emph{fixed-grid} discretization, which evaluates the
policy at uniformly spaced time steps $t_k = k \Delta t$, and
\emph{event-based} (arrival-driven) discretization, which advances
the simulation to the next stochastic event (e.g., order arrival,
queue transition) and evaluates the policy only at event times.  We
argue that event-based discretization is preferable on three grounds:
(i)~it eliminates a sensitive hyperparameter ($\Delta t$),
(ii)~it avoids a fundamental bias--variance trade-off that makes
fixed-grid performance non-monotone in $\Delta t$, and
(iii)~its computational cost scales with the event rate of the
underlying process rather than an artificial grid resolution.

% ------------------------------------------------------------------
\subsection{The discretization Trade-Off}\label{ssec:disc_tradeoff}
% ------------------------------------------------------------------

In a fixed-grid implementation with step size $\Delta t$, the agent
observes the state and selects an action at each time
$t_k = k \Delta t$, $k = 0, 1, \ldots, \lfloor T / \Delta t \rfloor$.
The choice of $\Delta t$ introduces a trade-off:
\begin{itemize}
  \item \textbf{Large $\Delta t$ (coarse grid):} The agent makes
    fewer decisions per episode, reducing the action space temporally
    but introducing \emph{discretization error}: the policy cannot
    react to events that occur between grid points.
  \item \textbf{Small $\Delta t$ (fine grid):} discretization error
    decreases, but the number of steps per episode grows as
    $T / \Delta t$, increasing the trajectory length.  This
    exacerbates the \emph{credit assignment problem}: each individual
    reward signal becomes smaller (scaling as $\Delta t$), and the
    agent must propagate information over more steps to learn
    long-horizon behaviour.  Variance in policy gradient estimates
    also increases with trajectory length.
\end{itemize}

This trade-off implies that there exists an intermediate $\Delta t$
at which fixed-grid RL performance peaks---and this optimum is
\emph{a priori} unknown to the practitioner.  Event-based
discretization sidesteps this trade-off entirely by operating at the
natural time scale of the environment: the agent acts only when
something happens.

% ------------------------------------------------------------------
\subsection{Market-Making Experiment}\label{ssec:disc_mm}
% ------------------------------------------------------------------

We train policy gradient (PG) and advantage actor-critic (A2C)
agents on the single-asset market-making environment
(cf.~\S\ref{sssec:robust_mm_setup}) under both event-based and
fixed-grid discretizations.  For fixed-grid, we vary the number of
steps per episode: $n \in \{50, 100, 200, 500, 1000\}$,
corresponding to $\Delta t = T/n$.  All agents use the same network
architecture (MLP $[64, 64]$), learning rate ($10^{-3}$), and are
trained for 4{,}000 epochs with 5 seeds each.  The evaluation metric
is the per-episode total reward (undiscounted), which is comparable
across discretizations.

Table~\ref{tab:disc_mm} reports converged per-episode rewards. The
Asmussen--Scanlan (AS) analytical optimal provides a reference upper
bound ($J^* = 68.13$).

\begin{table}[H]
\centering
\caption{Market-making: per-episode total reward (5 seeds, last 200
  training epochs).  $\%J^*$ is the fraction of the AS optimal
  achieved.  Event-driven results are from separate policy evaluation
  since per-step training rewards require knowledge of the
  (stochastic) number of events per episode.}
\label{tab:disc_mm}
\small
\begin{tabular}{llccr}
\toprule
discretization & Algorithm & $J$ (mean $\pm$ SE) & $\%J^*$ & $n$ steps \\
\midrule
Fixed-grid ($n{=}200$)  & PG  & $66.41 \pm 0.37$ & $97.5\%$ & 200 \\
Event-driven            & PG  & $64.88 \pm 0.36$ & $95.2\%$ & variable \\
Fixed-grid ($n{=}500$)  & PG  & $62.30 \pm 0.62$ & $91.5\%$ & 500 \\
Fixed-grid ($n{=}1000$) & PG  & $52.54 \pm 0.38$ & $77.1\%$ & 1000 \\
Fixed-grid ($n{=}100$)  & PG  & $47.48 \pm 0.31$ & $69.7\%$ & 100 \\
Fixed-grid ($n{=}50$)   & PG  & $24.04 \pm 0.06$ & $35.3\%$ & 50 \\
\midrule
Fixed-grid ($n{=}200$)  & A2C & $60.53 \pm 0.06$ & $88.9\%$ & 200 \\
Event-driven            & A2C & $57.04 \pm 0.22$ & $83.7\%$ & variable \\
Fixed-grid ($n{=}500$)  & A2C & $54.62 \pm 0.14$ & $80.2\%$ & 500 \\
Fixed-grid ($n{=}1000$) & A2C & $50.42 \pm 0.07$ & $74.0\%$ & 1000 \\
Fixed-grid ($n{=}100$)  & A2C & $44.86 \pm 0.09$ & $65.8\%$ & 100 \\
Fixed-grid ($n{=}50$)   & A2C & $22.47 \pm 0.07$ & $33.0\%$ & 50 \\
\bottomrule
\end{tabular}
\end{table}

Several observations are immediate.  First, performance is strongly
non-monotone in $n$ for fixed-grid agents: both PG and A2C peak at
$n = 200$ ($\Delta t = 0.05$), with substantial degradation for both
coarser and finer grids.  At $n = 1000$ ($\Delta t = 0.01$), PG
achieves only $77.1\%$ of $J^*$, a $20$-percentage-point drop from
the peak---despite having five times finer temporal resolution.
Second, event-driven agents achieve competitive performance without
requiring any grid resolution tuning: event-driven PG attains
$95.2\%$ of $J^*$, which places it between the best ($n = 200$) and
second-best ($n = 500$) fixed-grid configurations.  An RL
practitioner who does not know the optimal $n$ in advance risks
choosing $n = 500$ or $n = 1000$, both of which significantly
underperform event-driven PG.

% ------------------------------------------------------------------
\subsection{discretization Sweep: Wall-Clock Cost}\label{ssec:disc_wallclock}
% ------------------------------------------------------------------

To quantify the computational trade-off, we conduct a dedicated
sweep of A2C agents across six $\Delta t$ values with identical
training budgets (2{,}000 epochs, 5 seeds).  We record wall-clock
training time on the same hardware.

\begin{table}[H]
\centering
\caption{A2C discretization sweep: per-episode total reward and
  wall-clock training time (5 seeds, 2{,}000 epochs).}
\label{tab:disc_sweep}
\small
\begin{tabular}{lcccc}
\toprule
Method & $\Delta t$ & $J$ (mean $\pm$ SE) & $\%J^*$ & Time (s) \\
\midrule
FG A2C & $0.02$   & $22.46 \pm 0.06$ & $33.0\%$ & $132$ \\
FG A2C & $0.01$   & $44.68 \pm 0.11$ & $65.6\%$ & $239$ \\
FG A2C & $0.005$  & $60.46 \pm 0.14$ & $88.7\%$ & $527$ \\
FG A2C & $0.002$  & $54.52 \pm 0.10$ & $80.0\%$ & $1{,}296$ \\
FG A2C & $0.001$  & $50.07 \pm 0.06$ & $73.5\%$ & $2{,}763$ \\
FG A2C & $0.0005$ & $46.51 \pm 0.05$ & $68.3\%$ & $6{,}039$ \\
\midrule
Arrival-driven A2C & --- & $57.04 \pm 0.22$ & $83.7\%$ & $876$ \\
\bottomrule
\end{tabular}
\end{table}

The non-monotone pattern is stark: performance peaks at
$\Delta t = 0.005$ ($88.7\%$ of $J^*$) and declines for both
coarser and finer grids.  The finest grid ($\Delta t = 0.0005$)
achieves only $68.3\%$ of $J^*$ while requiring $6{,}039$ seconds
of training---$11.5\times$ slower than the peak and
$6.9\times$ slower than arrival-driven.  The arrival-driven agent
achieves $83.7\%$ of $J^*$ at $876$ seconds, placing it favourably
on the performance--cost Pareto frontier (see
Figure~\ref{fig:disc_pareto}).

\begin{figure}[H]
\centering
\begin{subfigure}[t]{0.48\textwidth}
  \centering
  \includegraphics[width=\textwidth]{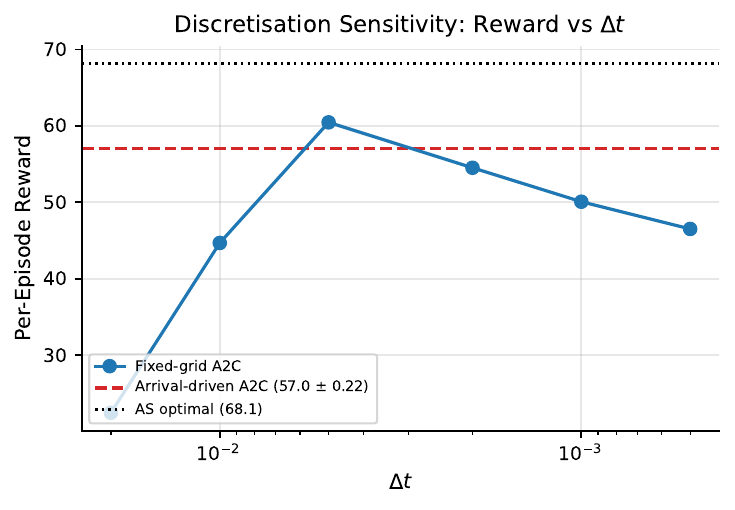}
  \caption{Per-episode reward vs $\Delta t$. Performance is
    non-monotone, peaking at an intermediate $\Delta t$ that is
    unknown \emph{a priori}.}
  \label{fig:disc_reward_dt}
\end{subfigure}
\hfill
\begin{subfigure}[t]{0.48\textwidth}
  \centering
  \includegraphics[width=\textwidth]{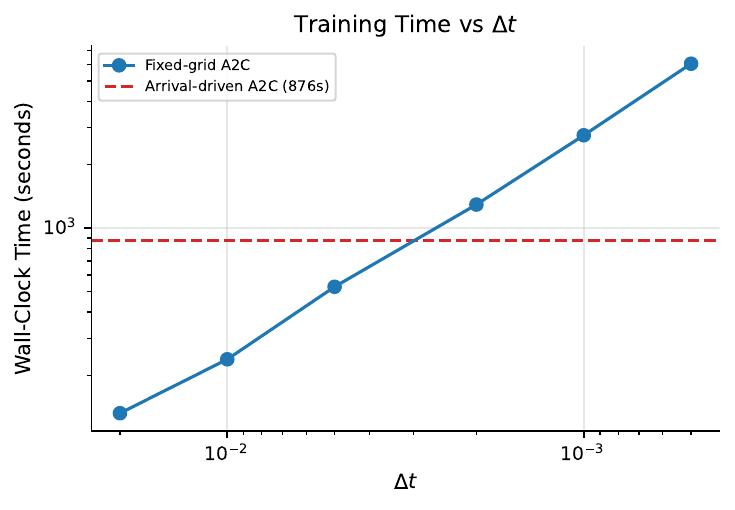}
  \caption{Wall-clock training time increases monotonically as
    $\Delta t$ decreases. Finer grids are both slower and worse.}
  \label{fig:disc_wallclock_dt}
\end{subfigure}
\caption{Fixed-grid A2C discretization sweep on the market-making
  environment (5 seeds, 2{,}000 epochs).  The dashed red line shows
  the arrival-driven A2C baseline.  Finer temporal resolution
  ($\Delta t \to 0$) does not improve RL performance; instead, the
  credit assignment problem dominates, and the agent fails to learn
  effective policies despite much higher computational cost.}
\label{fig:disc_sweep}
\end{figure}

\begin{figure}[H]
\centering
\includegraphics[width=0.55\textwidth]{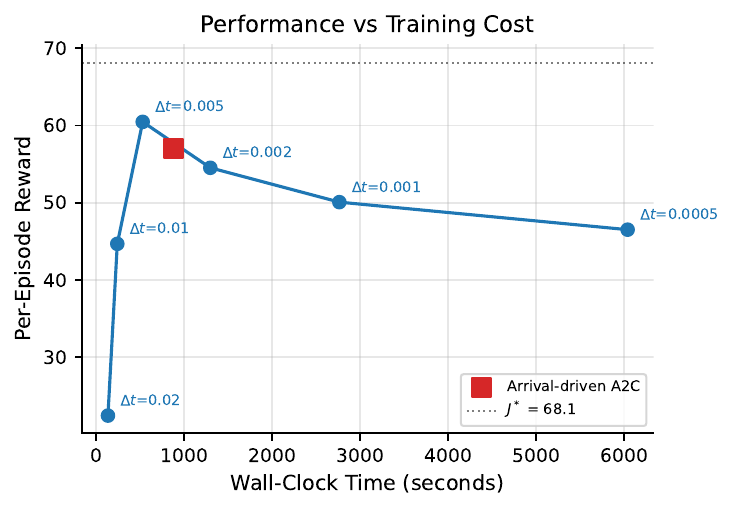}
\caption{Performance vs training cost for fixed-grid A2C at
  various $\Delta t$ (blue circles) and arrival-driven A2C (red
  square).  The arrival-driven agent achieves strong performance
  without requiring grid resolution tuning.  No fixed-grid
  configuration simultaneously dominates the arrival-driven agent
  in both reward and training time by a large margin.}
\label{fig:disc_pareto}
\end{figure}

% ------------------------------------------------------------------
\subsection{Queueing Scheduling Experiment}\label{ssec:disc_queue}
% ------------------------------------------------------------------

We also compare discretizations on the criss-cross queueing network
(cf.~\S\ref{sssec:robust_queue_setup}), with arrival rates
$\lambda = (1.5, 1.5, 1.5)$ and service rates
$\mu = (2.0, 3.0, 2.0)$.  The holding costs are
$h = (3.0, 1.0, 2.0)$.  Two analytical baselines provide reference
points: the $c\mu$-rule ($J_{c\mu} = -1163.3$) and the MaxPressure
policy ($J_{\mathrm{MP}} = -1276.4$), where less negative is better.

\begin{table}[H]
\centering
\caption{Queueing scheduling: per-episode total reward (5 seeds,
  2{,}000 epochs).  Analytical baselines are shown for reference.}
\label{tab:disc_queue}
\small
\begin{tabular}{lcr}
\toprule
Method & $J$ (mean $\pm$ SE) & \\
\midrule
FG A2C ($\Delta t = 0.1$)  & $-1{,}105 \pm 3$ & \\
Event-driven PG            & $-1{,}139 \pm 5$ & \\
$c\mu$-rule (analytical)   & $-1{,}163$ & \\
Event-driven A2C           & $-1{,}251 \pm 10$ & \\
MaxPressure (analytical)   & $-1{,}276$ & \\
FG A2C ($\Delta t = 0.01$) & $-1{,}337 \pm 4$ & \\
\bottomrule
\end{tabular}
\end{table}

The same non-monotone pattern appears: a $10\times$ refinement
of the grid from $\Delta t = 0.1$ to $\Delta t = 0.01$ degrades
performance by $21\%$ (from $-1{,}105$ to $-1{,}337$).  The
event-driven PG agent achieves $-1{,}139$, outperforming the
$c\mu$-rule analytical baseline and falling between the two
fixed-grid A2C configurations.

% ------------------------------------------------------------------
\subsection{Discussion}\label{ssec:disc_discussion}
% ------------------------------------------------------------------

The experiments above support three conclusions about time
discretization for continuous-time RL environments:

\paragraph{1. Fixed-grid performance is non-monotone in $\Delta t$.}
In both market making and queueing, performance peaks at an
intermediate $\Delta t$ and degrades for both coarser and finer
grids.  This is not a training budget artefact: all configurations
receive the same number of epochs.  The root cause is a trade-off
between \emph{discretization bias} (large $\Delta t$, too few
decisions) and \emph{credit assignment difficulty} (small $\Delta t$,
too many low-signal steps per episode).  The optimal $\Delta t$
depends on the environment dynamics and is unknown a priori, making
it a difficult hyperparameter to tune.

\paragraph{2. Event-based discretization eliminates $\Delta t$.}
The arrival-driven implementation has no grid resolution
hyperparameter.  It automatically operates at the natural time scale
of the stochastic process, making decisions only when events occur.
This yields competitive performance without tuning: event-driven PG
achieves $95.2\%$ of $J^*$ in market making (vs.\ $97.5\%$ for the
best-tuned fixed grid) and outperforms the $c\mu$-rule in queueing.
An RL practitioner using event-driven discretization avoids the risk
of accidentally choosing a suboptimal $\Delta t$ (e.g., $n = 1000$
in market making achieves only $77\%$ of $J^*$).

\paragraph{3. Computational cost scales with physics.}
In the event-based approach, the number of steps per episode
is determined by the event rate of the environment (e.g., order
arrival intensity in market making, transition rates in queueing),
not by an artificial grid.  This avoids wasting computation on
time intervals where no events occur.  In the market-making
discretization sweep, the finest grid ($\Delta t = 0.0005$) requires
$6{,}039$s of training and achieves only $68.3\%$ of $J^*$, while
arrival-driven A2C uses $876$s and achieves $83.7\%$.  More
generally, event-based cost grows linearly with the total event rate,
providing a natural computational scaling.

%%%%%%%%%%%%%%%%%%%%%%%%%%%%%%%%%%%%%%%%%%%%%%%%%%%%%%%%%%%%

\end{document}